\newlength{\doubleofsidemargin}
\newlength{\sidemargin}
\theoremstyle{definition}
\newtheorem{Def}{Definition}[section]
\newtheorem{rem}[Def]{Remark}
\newtheorem{ex}[Def]{Example}
\theoremstyle{plain}
\newtheorem{Prop}[Def]{Proposition}
\newtheorem{Lem}[Def]{Lemma}
\newtheorem{Thm}[Def]{Theorem}
\newtheorem{Cor}[Def]{Corollary}
\newtheorem{MThm}[Def]{Main Theorem}
\newcommand{\SwapSymbols}[1]{
	\expandafter\let\expandafter\temporarysymbol\csname #1\endcsname
	\expandafter\let\csname #1\expandafter\endcsname\csname var#1\endcsname
	\expandafter\let\csname var#1\endcsname\temporarysymbol
}
\def\mbZ{\mathbb{Z}}
\def\mcA{\mathcal{A}}
\def\mcC{\mathcal{C}}
\def\mcF{\mathcal{F}}
\def\mcL{\mathcal{L}}
\def\mcS{\mathcal{S}}
\def\mcX{\mathcal{X}}
\def\mfp{\mathfrak{p}}
\def\H{{\rm H}}
\def\R{{\mathbf{R}}}
\def\D{{\mathbf{D}}}
\def\Mod{\mathop{\operator@font Mod}\nolimits}
\def\mod{\mathop{\operator@font mod}\nolimits}
\def\noeth{\mathop{\operator@font noeth}\nolimits}
\def\Hom{\mathop{\operator@font Hom}\nolimits}
\def\End{\mathop{\operator@font End}\nolimits}
\def\Ext{\mathop{\operator@font Ext}\nolimits}
\def\Ker{\mathop{\operator@font Ker}\nolimits}
\def\Im{\mathop{\operator@font Im}\nolimits}
\def\Cok{\mathop{\operator@font Cok}\nolimits}
\def\RHom{\mathop{\operator@font \mathbf{R}Hom}\nolimits}
\def\Spec{\mathop{\operator@font Spec}\nolimits}
\def\Max{\mathop{\operator@font Max}\nolimits}
\def\Supp{\mathop{\operator@font Supp}\nolimits}
\def\Ass{\mathop{\operator@font Ass}\nolimits}
\def\ASpec{\mathop{\operator@font ASpec}\nolimits}
\def\ASupp{\mathop{\operator@font ASupp}\nolimits}
\def\AAss{\mathop{\operator@font AAss}\nolimits}
\def\asupp{\mathop{\operator@font asupp}\nolimits}
\def\Zg{\mathop{\operator@font Zg}\nolimits}
\def\Ann{\mathop{\operator@font Ann}\nolimits}
\title[Extension groups between atoms and objects]{Extension groups between atoms and objects in locally noetherian Grothendieck category}
\author{Ryo Kanda}
\thanks{The author is a Research Fellow of Japan Society for the Promotion of Science. This work is supported by Grant-in-Aid for JSPS Fellows 25$\cdot$249.}
\address{Graduate School of Mathematics, Nagoya University, Furo-cho, Chikusa-ku, Nagoya-shi, Aichi-ken, 464-8602, Japan}
\email{kanda.ryo@a.mbox.nagoya-u.ac.jp}
\subjclass[2010]{18E15 (Primary), 16D90, 16G30, 13C60 (Secondary)}
\keywords{Bass number; Atom spectrum; Grothendieck category; E-stable subcategory; Noetherian algebra}
\begin{document}

\begin{abstract}
	We define the extension group between an atom and an object in a locally noetherian Grothendieck category as a module over a skew field. We show that the dimension of the $i$-th extension group between an atom and an object coincides with the $i$-th Bass number of the object with respect to the atom. As an application, we give a bijection between the E-stable subcategories closed under arbitrary direct sums and direct summands and the subsets of the atom spectrum and show that such subcategories are also closed under extensions, kernels of epimorphisms, and cokernels of monomorphisms. We show some relationships to the theory of prime ideals in the case of noetherian algebras.
\end{abstract}

\maketitle

\section{Introduction}
\label{sec:intro}

In this paper, we investigate Bass numbers in a locally noetherian Grothendieck category and give generalizations of fundamental results in the commutative ring theory.

Let $R$ be a commutative noetherian ring. For a nonnegative integer $i$, the $i$\emph{-th Bass number} $\mu_{i}(\mfp,M)$ of an $R$-module $M$ with respect to a prime ideal $\mfp$ of $R$ is defined by
	\begin{equation*}
		E^{i}(M)=\bigoplus_{\mfp\in\Spec R}E(R/\mfp)^{\oplus\mu_{i}(\mfp,M)},
	\end{equation*}
	where $E^{i}(M)$ denotes the $i$-th term in the minimal injective resolution of $M$, and $E(R/\mfp)$ is the injective envelope of $R/\mfp$. Bass numbers are important invariants of modules in order to study homological aspects of commutative rings. For example, Bass \cite{Bass63} proved the following theorem.

\begin{Thm}[{Bass \cite[Lemma 2.7]{Bass63}}]\label{Thm:Bassthm}
	Let $R$ be a commutative noetherian ring, $\mfp$ a prime ideal of $R$, $M$ an $R$-module, and $i$ a nonnegative integer. Then we have the equation
	\begin{equation*}
		\mu_{i}(\mfp,M)=\dim_{k(\mfp)}\Ext_{R_{\mfp}}^{i}(k(\mfp),M_{\mfp})=\dim_{k(\mfp)}\Ext_{R}^{i}(R/\mfp,M)_{\mfp},
	\end{equation*}
	where $k(\mfp)$ is the residue field of $\mfp$.
\end{Thm}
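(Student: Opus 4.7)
The plan is to reduce to the case where $R$ is local with maximal ideal $\mfp$ and then compute $\Ext$ by running $\Hom_R(R/\mfp, -)$ through a minimal injective resolution of $M$. The decisive facts will be that $\Hom_R(R/\mfp, -)$ kills the summands $E(R/\mfq)$ with $\mfq \neq \mfp$, and that the induced differentials on the surviving summands vanish thanks to the minimality of the resolution.

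First I would handle the localization step. Since $R/\mfp$ is finitely presented over the noetherian ring $R$, localization at $\mfp$ commutes with $\Ext^{i}_{R}(R/\mfp, -)$, yielding the second equality $\Ext^{i}_{R}(R/\mfp, M)_{\mfp} = \Ext^{i}_{R_{\mfp}}(k(\mfp), M_{\mfp})$. For the first equality, localization at $\mfp$ is exact and sends $E(R/\mfq)$ to $E(R_{\mfp}/\mfq R_{\mfp})$ when $\mfq \subseteq \mfp$ and to zero otherwise; this carries the minimal injective resolution of $M$ to a minimal injective resolution of $M_{\mfp}$, so $\mu_{i}(\mfp, M) = \mu_{i}(\mfp R_{\mfp}, M_{\mfp})$. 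Thus it suffices to prove $\mu_{i}(\mfp, M) = \dim_{k(\mfp)} \Ext^{i}_{R}(k(\mfp), M)$ under the assumption that $\mfp$ is the maximal ideal of $R$.

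Now let $E^{\bullet}(M)$ be the minimal injective resolution of $M$ and consider the complex $\Hom_{R}(R/\mfp, E^{\bullet}(M))$. I would first identify each term: since $E(R/\mfq)$ is naturally an $R_{\mfq}$-module, every element of $\mfp \setminus \mfq$ acts invertibly on it, so the $\mfp$-torsion submodule $\Hom_{R}(R/\mfp, E(R/\mfq))$ vanishes for $\mfq \subsetneq \mfp$, while for $\mfq = \mfp$ it is the socle of $E(R/\mfp)$, which is $k(\mfp)$. Since $R/\mfp$ is finitely generated, $\Hom_{R}(R/\mfp, -)$ commutes with direct sums, giving $\Hom_{R}(R/\mfp, E^{i}(M)) \cong k(\mfp)^{\oplus \mu_{i}(\mfp, M)}$.

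The main obstacle, and the heart of the argument, is verifying that the differentials of this Hom complex are zero, so that its cohomology equals its terms. By minimality of the resolution, $\ker d^{i}$ is essential in $E^{i}(M)$. If $y \in E^{i}(M)$ satisfies $\mfp y = 0$, then $Ry \cong k(\mfp)$ is a simple submodule, so essentiality forces $Ry \subseteq \ker d^{i}$ and hence $d^{i}(y) = 0$. Therefore $\Ext^{i}_{R}(R/\mfp, M) \cong k(\mfp)^{\oplus \mu_{i}(\mfp, M)}$, and taking $k(\mfp)$-dimensions gives the claim.
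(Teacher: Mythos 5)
Your proof is correct. A point of context first: the paper does not actually prove Theorem~\ref{Thm:Bassthm} itself --- it is quoted from Bass as motivation --- so the relevant comparison is with the paper's proof of its generalization, Theorem~\ref{Thm:Bassext}. There the argument is organized differently: rather than computing the complex $\Hom(k(\mfp),E^{\bullet}(M))$ term-by-term and checking that the differentials vanish, the paper dimension-shifts along the cosyzygies $\mho^{j}(M)$, using the long exact sequence of $\Ext^{\bullet}_{\mcA}(\alpha,-)$ together with the two facts that $\Ext^{l}_{\mcA}(\alpha,E^{j}(M))=0$ for $l>0$ and that $\Hom_{\mcA}(\alpha,-)$ sends the essential inclusion $\mho^{j}(M)\hookrightarrow E^{j}(M)$ to an isomorphism. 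The underlying mechanism is the same as in your proof: your ``differentials vanish'' step is precisely the observation that a simple submodule $Ry\cong k(\mfp)$ of $E^{i}(M)$ must land inside the essential submodule $\ker d^{i}$, which is exactly the statement that $\Hom_{R}(k(\mfp),\ker d^{i})\to\Hom_{R}(k(\mfp),E^{i}(M))$ is an isomorphism. The paper's bookkeeping via cosyzygies is chosen because it transports cleanly to the categorical setting, where there is no ``socle of $E^{i}$ computed elementwise''; your formulation is the traditional commutative one.

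Two minor points you should be explicit about. First, your reduction step asserts that localization at $\mfp$ carries a minimal injective resolution of $M$ to a minimal injective resolution of $M_{\mfp}$; this is true and standard (it follows from $E_{R}(N)_{\mfp}\cong E_{R_{\mfp}}(N_{\mfp})$ for noetherian $R$, together with the fact that $E(R/\mfq)_{\mfp}$ is $E(R/\mfq)$ if $\mfq\subseteq\mfp$ and $0$ otherwise), but it is a non-trivial lemma and deserves a citation rather than a bare assertion, especially since $M$ is not assumed finitely generated. Second, in the commutation of $\Ext$ with localization, what you need is a resolution of $R/\mfp$ by finitely generated projectives, available because $R$ is noetherian; this is the correct justification and you have it, but note that no finiteness hypothesis on $M$ is required, which is good since the theorem imposes none.
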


In the case of noncommutative rings, prime ideals do not always work well, and hence it is hard to generalize Theorem \ref{Thm:Bassthm} to noncommutative rings straightforwardly. In this paper, we see that the generalization is possible if we treat it in a viewpoint of atoms.

For a ring $R$, Storrer \cite{Storrer} introduced the notion of atoms in the category $\Mod R$ of right $R$-modules, which are equivalence classes of monoform modules. In the case where $R$ is a right noetherian ring, it is known that the atoms bijectively correspond to the isomorphism classes of indecomposable injective modules. If $R$ is a commutative ring (which is not necessarily noetherian), the atoms in $\Mod R$ bijectively correspond to the prime ideals of $R$. In \cite{Kanda}, we investigated atoms in arbitrary abelian categories, especially noetherian abelian categories and locally noetherian Grothendieck categories, and gave classifications of Serre subcategories and localizing subcategories, respectively.

In the case of locally noetherian Grothendieck category $\mcA$, the atoms also bijectively correspond to the isomorphism classes of simple objects in the spectral category of $\mcA$, which was introduced by Gabriel and Oberst \cite{GabrielOberst}. In this paper, we regard a notion in \cite{GabrielOberst} as morphism spaces between atoms and objects (Definition \ref{Def:morspace}), and by deriving them, define extension groups between atoms and objects (Definition \ref{Def:extgroup}). Then we obtain the following description of Bass numbers (Definition \ref{Def:Bassnumber}) for $\mcA$.

\begin{MThm}[Theorem \ref{Thm:Bassext}]\label{MThm:main}
	Let $\mcA$ be a locally noetherian Grothendieck category, $\alpha$ an atom in $\mcA$, $M$ an object in $\mcA$, and $i$ a nonnegative integer. Then we have the equation
	\begin{equation*}
		\mu_{i}(\alpha,M)=\dim_{k(\alpha)}\Ext^{i}_{\mcA}(\alpha,M),
	\end{equation*}
	where $\mu_{i}(\alpha,M)$ is the $i$-th Bass number of $M$ with respect to $\alpha$, and $k(\alpha)$ is the residue (skew) field of $\alpha$.
\end{MThm}

This theorem has some applications to E-stable subcategories. A full subcategory $\mcX$ of $\mcA$ is called $E$\emph{-stable} if for any object $M$ in $\mcA$, $M$ belongs to $\mcX$ if and only if $E^{i}(M)$ belongs to $\mcX$ for each nonnegative integer $i$. As a generalization of a result by Takahashi \cite[Theorem 2.18]{Takahashi}, we show the following classification.

\begin{Thm}[Theorem \ref{Thm:subcatatom}]\label{Thm:introsubcatatom}
	Let $\mcA$ be a locally noetherian Grothendieck category. Then there exists a bijection between the E-stable subcategories of $\mcA$ closed under arbitrary direct sums and direct summands, and the subsets of the set of all the atoms in $\mcA$.
\end{Thm}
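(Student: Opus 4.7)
The plan is to construct explicit inverse bijections. For each atom $\alpha \in \ASpec \mcA$ write $E(\alpha)$ for the injective envelope of a monoform representative; in a locally noetherian Grothendieck category the $E(\alpha)$ are, up to isomorphism, exactly the indecomposable injective objects of $\mcA$, and every injective object decomposes as a direct sum of such $E(\alpha)$ with well-defined multiplicities. I would then set
\[
    \Psi(\mcX) := \{\alpha \in \ASpec \mcA : E(\alpha) \in \mcX\}, \quad \Phi(\Sigma) := \{M \in \mcA : \mu_{i}(\alpha,M) = 0 \text{ for all } i \geq 0 \text{ and all } \alpha \notin \Sigma\}.
\]
By Main Theorem \ref{MThm:main}, $\Phi(\Sigma)$ is equivalently the full subcategory of those $M$ with $\Ext^{i}_{\mcA}(\alpha,M) = 0$ for every $i$ and every $\alpha \notin \Sigma$, but for what follows the Bass-number description is more convenient.

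First I would verify the three closure properties of $\Phi(\Sigma)$. Because each $E^{i}(M)$ is itself injective, its own minimal injective resolution is concentrated in degree zero and equals $E^{i}(M)$; hence $E^{i}(M) \in \Phi(\Sigma)$ means exactly that every indecomposable summand of $E^{i}(M)$ has the form $E(\alpha)$ with $\alpha \in \Sigma$. Unwinding the definitions, the condition ``$E^{j}(M) \in \Phi(\Sigma)$ for all $j$'' is then literally the condition that places $M$ in $\Phi(\Sigma)$, which gives $E$-stability. Closure under arbitrary direct sums and direct summands reduces to the fact that in a locally noetherian Grothendieck category the minimal injective resolution commutes with direct sums, $E^{i}(\bigoplus_{j} M_{j}) \cong \bigoplus_{j} E^{i}(M_{j})$; this in turn holds because direct sums of injectives remain injective and because any noetherian subobject of $\bigoplus_{j} E(M_{j})$ is contained in a finite subsum, reducing the essentiality of $\bigoplus_{j} M_{j} \hookrightarrow \bigoplus_{j} E(M_{j})$ to the finite case.

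For the bijection itself, $\Psi \circ \Phi = \id$ is immediate: since $\mu_{0}(\alpha, E(\alpha)) = 1$ and all other Bass numbers of $E(\alpha)$ vanish, $E(\alpha) \in \Phi(\Sigma)$ if and only if $\alpha \in \Sigma$. For $\Phi \circ \Psi = \id$ and $\mcX$ with the stated closure properties, the inclusion $\mcX \subseteq \Phi(\Psi(\mcX))$ uses $E$-stability of $\mcX$ to see that $E^{i}(M) \in \mcX$ for all $i$ whenever $M \in \mcX$, and then closure under direct summands to extract $E(\alpha) \in \mcX$ whenever $\mu_{i}(\alpha,M) > 0$, forcing $\alpha \in \Psi(\mcX)$. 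The reverse inclusion uses closure under direct sums to reassemble each $E^{i}(M)$ inside $\mcX$ from its summands $E(\alpha)$ with $\alpha \in \Psi(\mcX)$, and then $E$-stability to conclude $M \in \mcX$. The main obstacle I expect is the commutation of minimal injective resolutions with arbitrary direct sums at this level of generality; once that is in hand, everything else is bookkeeping with the atom decomposition of injective objects.
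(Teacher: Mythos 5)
Your proof is correct and follows essentially the same approach as the paper: your $\Phi(\Sigma)$ is exactly the paper's $\asupp^{-1}\Sigma$, your $\Psi(\mcX)=\{\alpha : E(\alpha)\in\mcX\}$ coincides on the subcategories under consideration with the paper's $\asupp\mcX$, and both directions of the bijection are verified via the same interplay of $E$-stability, closure under direct sums and direct summands, and the decomposition $E^{i}(M)=\bigoplus_{\alpha}E(\alpha)^{\oplus\mu_{i}(\alpha,M)}$. The one lemma you flag as a potential obstacle, commutation of minimal injective resolutions with arbitrary direct sums, is precisely the ingredient the paper cites (\cite[Proposition 2.1]{Matlis}), and your sketch of why it holds in the locally noetherian setting (noetherian subobjects of a direct sum lie in a finite subsum) is the right justification.
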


As an application of Main Theorem \ref{MThm:main}, we show the following result. In the case of commutative noetherian rings, it is stated by Takahashi \cite[Corollary 2.19]{Takahashi}.

\begin{Cor}[Corollary \ref{Cor:subcatexact}]
	Let $\mcA$ be a locally noetherian Grothendieck category and $\mcX$ an E-stable subcategory of $\mcA$ closed under arbitrary direct sums and direct summands. Then $\mcX$ is also closed under extensions, kernels of epimorphisms, and cokernels of monomorphisms.
\end{Cor}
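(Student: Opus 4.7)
The plan is to combine Theorem \ref{Thm:introsubcatatom} with Main Theorem \ref{MThm:main} to translate membership in $\mcX$ into a vanishing condition on extension groups, and then apply the long exact sequence of $\Ext$ in the second variable.

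First, by Theorem \ref{Thm:introsubcatatom}, $\mcX$ corresponds to a subset $S$ of $\ASpec\mcA$; I expect the bijection to identify $S$ with the set of atoms $\alpha$ whose injective envelope lies in $\mcX$. Unwinding $E$-stability together with closure under arbitrary direct sums and direct summands, an object $M$ lies in $\mcX$ if and only if no atom outside $S$ appears in the minimal injective resolution of $M$, i.e.\ $\mu_i(\alpha, M) = 0$ for every $i \geq 0$ and every $\alpha \notin S$. By Main Theorem \ref{MThm:main}, this is in turn equivalent to the vanishing $\Ext^i_{\mcA}(\alpha, M) = 0$ for all $i \geq 0$ and all $\alpha \notin S$.

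Each of the three closure properties then follows from the long exact sequence of $\Ext^\ast_{\mcA}(\alpha, -)$, which is available because these groups are defined in Definition \ref{Def:extgroup} by deriving the morphism-space functor $\Hom_{\mcA}(\alpha, -)$ in its second variable. For a short exact sequence $0 \to L \to M \to N \to 0$ and a fixed $\alpha \notin S$, the long exact sequence
\[ \cdots \to \Ext^i_{\mcA}(\alpha, L) \to \Ext^i_{\mcA}(\alpha, M) \to \Ext^i_{\mcA}(\alpha, N) \to \Ext^{i+1}_{\mcA}(\alpha, L) \to \cdots \]
allows me to deduce vanishing of any one term from vanishing of the other two. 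Concretely: if $L, N \in \mcX$, both flanks vanish and hence $M \in \mcX$; if $M, N \in \mcX$ with $M \to N$ an epimorphism of kernel $L$, then $\Ext^{i-1}_{\mcA}(\alpha, N)$ and $\Ext^i_{\mcA}(\alpha, M)$ vanish for $i \geq 1$ (with $\Hom_{\mcA}(\alpha, M) = 0$ handling $i = 0$), so $L \in \mcX$; and dually, if $L, M \in \mcX$ with $L \hookrightarrow M$ of cokernel $N$, then $\Ext^i_{\mcA}(\alpha, M)$ and $\Ext^{i+1}_{\mcA}(\alpha, L)$ vanish and $N \in \mcX$.

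The substantive step is the reduction in the second paragraph, which rests entirely on Main Theorem \ref{MThm:main}; after that, the argument is formal diagram chasing. The only technical point worth double-checking is that Definition \ref{Def:extgroup} produces genuine right derived functors admitting long exact sequences in the second argument, but this should be immediate from the injective-resolution formalism available in any Grothendieck category.
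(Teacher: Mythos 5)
Your proof is correct and follows essentially the same route as the paper: both characterize membership in $\mcX$ via vanishing of $\Ext^{i}_{\mcA}(\alpha,-)$ for $\alpha$ outside $\asupp\mcX$ (the paper packages this as $\asupp M\subset\asupp\mcX$ via Proposition \ref{Prop:asupp} and Theorem \ref{Thm:Bassext}), and then invoke the long exact sequence of $\RHom_{\mcA}(\alpha,-)$ applied to the short exact sequence. The paper phrases the diagram chase in terms of the distinguished triangle and the resulting inclusions among $\asupp L$, $\asupp M$, $\asupp N$, while you unwind it degree by degree, but these are the same argument.
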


In section \ref{sec:modfinalg}, we deal with noetherian algebras and show some relationships between our theory and the theory of prime ideals described in \cite{GotoNishida}.

We refer the reader to \cite{Matsumura} for the general theory of commutative rings, \cite{Popescu} for that of abelian categories and Grothendieck categories, and \cite{Weibel} for that of derived categories.

\section{Atoms}
\label{sec:atoms}

Throughout this paper, we deal with a locally noetherian Grothendieck category $\mcA$. Its definition is as follows.

\begin{Def}\leavevmode
	\begin{enumerate}
		\item An abelian category $\mcA$ is called a \emph{Grothendieck category} if $\mcA$ has a generator and arbitrary direct sums and satisfies the following condition: for any object $M$ in $\mcA$, any family $\mcL=\{L_{\lambda}\}_{\lambda\in\Lambda}$ of subobjects of $M$ such that any finite subfamily of $\mcL$ has an upper bound in $\mcL$, and any subobject $N$ of $M$, we have
		\begin{equation*}
			\left(\sum_{\lambda\in\Lambda}L_{\lambda}\right)\cap N=\sum_{\lambda\in\Lambda}(L_{\lambda}\cap N).
		\end{equation*}
		\item A Grothendieck category $\mcA$ is called \emph{locally noetherian} if there exists a set of generators of $\mcA$ consisting of noetherian objects.
	\end{enumerate}
\end{Def}

\begin{rem}\leavevmode
	\begin{enumerate}
		\item It is known that an abelian category with a generator and arbitrary direct sums is a Grothendieck category if and only if direct limit is exact.
		\item For any Grothendieck category $\mcA$, it is shown in \cite[Theorem 2.9]{Mitchell} that any object $M$ in $\mcA$ has its injective hull $E(M)$ in $\mcA$.
	\end{enumerate}
\end{rem}

We recall the definition of atoms, which was introduced by Storrer \cite{Storrer}.

\begin{Def}
	Let $\mcA$ be a locally noetherian Grothendieck category.
	\begin{enumerate}
		\item An object $H$ in $\mcA$ is called \emph{monoform} if for any nonzero subobject $N$ of $H$, there exists no common nonzero subobject of $H$ and $H/N$, that is, there does not exist a nonzero subobject of $H$ which is isomorphic to a subobject of $H/N$.
		\item We say that monoform objects $H$ and $H'$ in $\mcA$ are \emph{atom-equivalent} if there exists a common nonzero subobject of $H$ and $H'$.
	\end{enumerate}
\end{Def}

The following properties about monoform objects are well known (for example, \cite{Storrer}). For complete proofs, we refer to \cite{Kanda}.

\begin{Prop}\label{Prop:monoform}
	Let $\mcA$ be a locally noetherian Grothendieck category.
	\begin{enumerate}
		\item\label{item:monoformsub} Any nonzero subobject of a monoform object in $\mcA$ is also monoform.
		\item\label{item:monoformuniform} Any monoform object $H$ in $\mcA$ is uniform, that is, any nonzero subobjects $N_{1}$ and $N_{2}$ of $H$ have a nonzero intersection.
		\item\label{item:submonoform} Any nonzero object in $\mcA$ has a monoform subobject.
	\end{enumerate}
\end{Prop}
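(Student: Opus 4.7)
Parts \ref{item:monoformsub} and \ref{item:monoformuniform} will be short definition chases inside the ambient object $H$, while part \ref{item:submonoform} is the substantive one and will require a reduction to the noetherian case together with a maximality argument on a well-chosen poset of subobjects.

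For \ref{item:monoformsub}, I would take monoform $H$, a nonzero $N \subseteq H$, an arbitrary nonzero $N' \subseteq N$, and a hypothetical common nonzero subobject $K$ of $N$ and $N/N'$. Composing the witnessing monomorphisms with $N \hookrightarrow H$ and $N/N' \hookrightarrow H/N'$ realises $K$ as a common nonzero subobject of $H$ and $H/N'$, contradicting monoformity of $H$. For \ref{item:monoformuniform}, if nonzero $N_1, N_2 \subseteq H$ satisfied $N_1 \cap N_2 = 0$, then the composite $N_1 \hookrightarrow H \to H/N_2$ would be a monomorphism, exhibiting $N_1$ itself as a common nonzero subobject of $H$ and $H/N_2$, again a contradiction.

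For \ref{item:submonoform}, the first step is to reduce to finding a monoform subobject of a nonzero noetherian object: since $\mcA$ has a set of noetherian generators, any nonzero object $X$ receives a nonzero map from some such generator, whose image is a nonzero noetherian subobject $M \subseteq X$. Inside $M$ I would then apply the ascending chain condition to the set $\mathcal{P}$ of subobjects $N \subseteq M$ for which there exists $K$ with $N \subsetneq K \subseteq M$ and a monomorphism $K/N \hookrightarrow M$. The set $\mathcal{P}$ is nonempty (take $N = 0$ and $K = M$), and ACC on subobjects of $M$ produces a maximal element $N$ with some witness $K$ and $\phi \colon K/N \hookrightarrow M$. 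I would argue that $K/N$ must already be monoform: otherwise there would exist $N \subsetneq N' \subsetneq K$ and a nonzero object $L$ with monomorphisms $L \hookrightarrow K/N$ and $L \hookrightarrow K/N'$; the latter has image of the form $L^*/N'$ for some $N' \subsetneq L^* \subseteq K$, so $L \cong L^*/N' \hookrightarrow K/N \xrightarrow{\phi} M$ would place $N' \supsetneq N$ into $\mathcal{P}$, contradicting maximality. The image $\phi(K/N) \subseteq M \subseteq X$ is then the desired monoform subobject.

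The hard part is entirely in \ref{item:submonoform}, and the delicate point is the design of $\mathcal{P}$. Phrasing membership in terms of a monomorphism $K/N \hookrightarrow M$ (rather than merely $K/N$ being a subquotient of $M$) is exactly what lets the hypothetical failure of monoformity of $K/N$ be lifted to a strictly larger element of $\mathcal{P}$; without this the maximality would not bite. Local noetherianity is essential for the initial reduction, and ACC on subobjects of $M$ is essential for the existence of the maximal element.
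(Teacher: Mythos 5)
Your proofs of all three parts are correct, and the overall structure matches the paper's: it proves (1) and (2) by the same direct definition chases you give, and for (3) it explicitly makes the same reduction ``note that any object in $\mcA$ has a nonzero noetherian subobject'' before deferring to \cite[Theorem 2.9]{Kanda} for the noetherian case. Your maximality argument for the noetherian case is the standard one (and, to the best of my knowledge, is essentially the argument in the cited reference): defining $\mathcal{P}$ via embeddings of subquotients $K/N$ into $M$, rather than merely via quotients $M/N$, is exactly the right generality, since the larger element $N' \in \mathcal{P}$ you produce has witness $L^* \subseteq K$, not $M$. One small observation: once you know $K/N$ is monoform, $\phi(K/N) \cong K/N$ directly, so there is no need to invoke part (1) again at the end; but that is a matter of taste, not correctness.
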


\begin{proof}
	(1) \cite[Proposition 2.2]{Kanda}.
	
	(2) \cite[Proposition 2.6]{Kanda}.
	
	(3) \cite[Theorem 2.9]{Kanda}. Note that any object in $\mcA$ has a nonzero noetherian subobject.
\end{proof}

By Proposition \ref{Prop:monoform} (\ref{item:monoformuniform}), the atom equivalence is an equivalence relation between monoform objects in $\mcA$. We denote by $\ASpec\mcA$ the quotient set and call it the \emph{atom spectrum} of $\mcA$, and we call an element of $\ASpec\mcA$ an \emph{atom} in $\mcA$. The equivalence class of a monoform object $H$ is denoted by $\overline{H}$, and we regard $H$ as an element of $\overline{H}$. As in \cite[Proposition 5.3]{Kanda}, $\ASpec\mcA$ forms a set. In the case where $R$ is a commutative noetherian ring, the map $\mfp\mapsto\overline{R/\mfp}$ is a bijection between the prime spectrum $\Spec R$ of $R$ and $\ASpec(\Mod R)$. For the details of these arguments, see \cite{Storrer} or \cite{Kanda}.

In the commutative ring theory, Matlis \cite[Proposition 3.1]{Matlis} shows that for a commutative noetherian ring $R$, the map $\mfp\mapsto E(R/\mfp)$ is a bijection between $\Spec R$ and the set of all the isomorphism classes of indecomposable injective modules over $R$. This result can be generalized in terms of atoms as follows. That is shown by Storrer \cite[Corollary 2.5]{Storrer} in the case of a module category, and a complete proof for the case of a locally noetherian Grothendieck category is in \cite[Theorem 5.9]{Kanda}.

\begin{Thm}[{Storrer \cite{Storrer}}]\label{Thm:atominj}
	Let $\mcA$ be a locally noetherian Grothendieck category.
	\begin{enumerate}
		\item\label{item:injenvofatom} Let $\alpha$ be an atom in $\mcA$ and $H\in\alpha$. Then the injective hull $E(H)$ of $H$ does not depend on the choice of the representative $H$ of $\alpha$, that is, for another $H'\in\alpha$, $E(H')$ is isomorphic to $E(H)$. Hence denote it by $E(\alpha)$ and call it the injective hull of $\alpha$.
		\item\label{item:atomindecinj} The map $\alpha\mapsto E(\alpha)$ is a bijection between $\ASpec\mcA$ and the set of all the isomorphism classes of indecomposable injective objects in $\mcA$.
	\end{enumerate}
\end{Thm}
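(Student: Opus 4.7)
The plan is to deduce both parts from a single key observation: for a monoform (hence uniform, by Proposition~\ref{Prop:monoform}(\ref{item:monoformuniform})) object $H$, every nonzero subobject $L$ of $H$ is essential in $H$, so the inclusion induces an isomorphism $E(L) \cong E(H)$.

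For part (\ref{item:injenvofatom}), given $H, H' \in \alpha$, the atom equivalence provides a nonzero object $L$ with monomorphisms into both $H$ and $H'$; by the observation, $L$ is essential in each, and so $E(H) \cong E(L) \cong E(H')$. The resulting $E(\alpha)$ is moreover indecomposable: if $E(\alpha) = A \oplus B$ with $A, B \neq 0$, essentiality of $H$ in $E(\alpha)$ would force both $H \cap A$ and $H \cap B$ to be nonzero, but uniformity of $H$ forbids $(H \cap A) \cap (H \cap B) = H \cap (A \cap B) = 0$. Hence one summand must vanish, and $\alpha \mapsto E(\alpha)$ is a well-defined map into isomorphism classes of indecomposable injectives.

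For part (\ref{item:atomindecinj}), I would verify injectivity and surjectivity separately. For injectivity, suppose $E(\alpha) \cong E(\beta)$, and pick $H \in \alpha$, $H' \in \beta$; identifying $E(\alpha)$ with $E(\beta)$, we obtain $H$ as a nonzero subobject of $E(\beta)$, in which $H'$ is essential, so $H \cap H' \neq 0$ is a common nonzero subobject of $H$ and $H'$, forcing $\alpha = \beta$. For surjectivity, let $I$ be an indecomposable injective. By Proposition~\ref{Prop:monoform}(\ref{item:submonoform}), $I$ contains a nonzero monoform subobject $H$; injectivity of $I$ extends the inclusion $H \hookrightarrow I$ to a morphism $E(H) \to I$, which is monic by essentiality of $H$ in $E(H)$. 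Then $E(H)$ is a nonzero direct summand of the indecomposable $I$, hence $I \cong E(H) = E(\overline{H})$.

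The main technical content lies in the first observation, which converts atom equivalence into agreement of injective hulls via uniformity; once this is in hand, everything else reduces to standard manipulations with essential extensions and with the splitting of injective subobjects. No deep obstacle is anticipated, since the locally noetherian hypothesis was already absorbed into Proposition~\ref{Prop:monoform} to guarantee the existence of monoform subobjects of every nonzero object.
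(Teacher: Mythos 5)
Your proof is correct. The paper itself does not spell out an argument for this theorem (it only cites Storrer and \cite[Theorem 5.9]{Kanda}), but your reconstruction is the standard one and matches the argument found in those references: reduce everything to the fact that a monoform object is uniform, so that any nonzero subobject is essential and hence has the same injective hull, and then combine essentiality with the splitting property of injective subobjects to obtain indecomposability, injectivity, and surjectivity of the map $\alpha\mapsto E(\alpha)$.
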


In general, for an atom $\alpha$ in $\mcA$, there exist many monoform objects $H$ such that $H\in\alpha$. However, we can take a canonical one $H_{\alpha}$ in the following sense.

\begin{Thm}\label{Thm:atomicobj}
	Let $\mcA$ be a locally noetherian Grothendieck category and $\alpha$ an atom in $\mcA$. Then there exists a unique maximal monoform subobject $H_{\alpha}$ of $E(\alpha)$.
\end{Thm}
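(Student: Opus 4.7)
The plan is to define $H_\alpha$ as the sum, inside $E(\alpha)$, of all monoform subobjects of $E(\alpha)$, and to prove that this sum is itself monoform; maximality and uniqueness of $H_\alpha$ are then immediate. The key geometric input is that $E(\alpha)$ is uniform (being indecomposable injective by Theorem \ref{Thm:atominj}), so any two nonzero subobjects of $E(\alpha)$ meet nontrivially, and every nonzero subobject of a monoform $M \subseteq E(\alpha)$ is itself monoform of atom class $\alpha$ by Proposition \ref{Prop:monoform}(\ref{item:monoformsub}). I will use the reformulation that $M \subseteq E(\alpha)$ is monoform iff $M \neq 0$ and for every nonzero $N \subset M$ the quotient $M/N$ contains no monoform subobject of atom class $\alpha$. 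I will also use that this ``no-$\alpha$-subobject'' property is closed under extensions: for $K \subseteq B$ monoform of class $\alpha$ in a short exact sequence $0 \to A \to B \to C \to 0$, either $K \cap A = 0$ and $K \hookrightarrow C$, or $K \cap A$ is a nonzero monoform subobject of $A$ of class $\alpha$ by Proposition \ref{Prop:monoform}(\ref{item:monoformsub}).

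The first step is closure under pairwise sums: if $M_1, M_2 \subseteq E(\alpha)$ are monoform, so is $M_1 + M_2$. For a nonzero $N \subset M_1+M_2$ I analyze the exact sequence
\[
0 \to M_1/(M_1 \cap N) \to (M_1+M_2)/N \to M_2/(M_2 \cap (M_1+N)) \to 0,
\]
where the right-hand term comes from the second isomorphism theorem applied to $(M_1+N) + M_2 = M_1+M_2$. Uniformness of $E(\alpha)$ gives $M_1 \cap N \neq 0$, and $M_1 \cap M_2 \neq 0$ implies $M_2 \cap (M_1+N) \neq 0$. Monoformness of each $M_i$ then forces the outer terms to contain no monoform subobject of class $\alpha$, and the extension closure from the first paragraph transfers this to the middle term.

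The second, harder step is closure under directed unions: for a directed family $\{M_i\}$ of monoform subobjects of $E(\alpha)$, the sum $M = \sum_i M_i$ is monoform. Here local noetherianness enters in an essential way. Supposing some nonzero $N \subset M$ has a quotient $M/N$ containing a monoform $K$ of class $\alpha$, I pick a nonzero noetherian subobject $K_0 \subseteq K$, still monoform of class $\alpha$ by Proposition \ref{Prop:monoform}(\ref{item:monoformsub}). The preimage $\pi^{-1}(K_0)$ (with $\pi: M \to M/N$) is the directed union of its noetherian subobjects by local noetherianness; their images form a directed system with union $K_0$, and the noetherianness of $K_0$ makes this stabilize, producing a noetherian $L_0'' \subseteq M$ with $\pi(L_0'') = K_0$. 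Axiom AB5 then gives $L_0'' = \sum_i (L_0'' \cap M_i)$, and the noetherianness of $L_0''$ makes this directed union stabilize at some $L_0'' \cap M_{i_0} = L_0''$, so $L_0'' \subseteq M_{i_0}$. But then $K_0 \cong L_0''/(L_0'' \cap N)$ embeds into $M_{i_0}/(M_{i_0} \cap N)$, which contains no monoform subobject of class $\alpha$ by monoformness of $M_{i_0}$ (using $M_{i_0} \cap N \neq 0$ from uniformness), a contradiction.

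With both closure properties in hand, I set $H_\alpha$ to be the sum inside $E(\alpha)$ of all its monoform subobjects. Paragraph~2 shows the family is directed once closed under finite sums, and paragraph~3 then shows the total sum $H_\alpha$ is monoform; containing every monoform subobject of $E(\alpha)$ by construction, it is the unique maximal one. The hard part will be the directed-union step: monoformness is not formally preserved under filtered colimits in a general Grothendieck category, and it is precisely the interplay between local noetherianness (producing a noetherian lift of $K_0$) and AB5 (forcing the lift into a single $M_{i_0}$) that makes the argument go through.
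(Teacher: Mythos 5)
Your proof is correct, but it takes a genuinely different route from the paper's. The paper constructs $H_\alpha$ \emph{top-down} as $\bigcap_{f\in J_\alpha}\Ker f$, the intersection of kernels of all non-automorphism endomorphisms of $E(\alpha)$; it first shows each monoform subobject of $E(\alpha)$ lies in this intersection, then shows the intersection itself is monoform, invoking injectivity of $E(\alpha)$ twice to lift morphisms and manufacture an element of $J_\alpha$. You construct $H_\alpha$ \emph{bottom-up} as the sum of all monoform subobjects, and show this family is closed under finite sums (via the extension closure of the ``no $\alpha$-subobject'' property across the short exact sequence $0\to M_1/(M_1\cap N)\to(M_1+M_2)/N\to M_2/(M_2\cap(M_1+N))\to 0$, with uniformness ensuring both denominators are nonzero) and under directed sums (via local noetherianness, to produce a noetherian lift of a noetherian $K_0\subseteq K$, and AB5, to force that lift into a single $M_{i_0}$). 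What each buys: the paper's argument is shorter, closer to Storrer's module-theoretic original, and uses only indecomposability and injectivity of $E(\alpha)$, with no visible appeal to local noetherianness or AB5; yours never uses injectivity, hence actually establishes the stronger fact that any nonzero uniform object of a locally noetherian Grothendieck category has a unique maximal monoform subobject, at the price of leaning on local noetherianness and AB5 in the directed-sum step. Your bottom-up definition also makes maximality and uniqueness immediate once monoformness of the total sum is proved.
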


\begin{proof}
	See \cite{Storrer} for the case of a module category. We give a proof for the general case.
	
	Denote by $J_{\alpha}$ the set of all the endomorphisms of $E(\alpha)$ which are not automorphisms. Since any Grothendieck category has arbitrary direct products (\cite[Corollary 7.10]{Popescu}), we can define a subobject $H_{\alpha}$ of $E(\alpha)$ by
	\begin{equation*}
		H_{\alpha}=\bigcap_{f\in J_{\alpha}}\Ker f,
	\end{equation*}
	where $\bigcap_{f\in J_{\alpha}}\Ker f$ is defined as the kernel of the morphism
	\begin{equation*}
		E(\alpha)\to\prod_{f\in J_{\alpha}}\frac{E(\alpha)}{\Ker f}
	\end{equation*}
	induced by the canonical morphism $E(\alpha)\to E(\alpha)/\Ker f$ for each $f\in J_{\alpha}$.
	
	Let $H$ be a monoform subobject of $E(\alpha)$ and $f\in J_{\alpha}$. Denote the composite $H\hookrightarrow E(\alpha)$ and $f:E(\alpha)\to E(\alpha)$ by $g$. If $f$ is a monomorphism, then $f$ is a split monomorphism which is not an epimorphism. This contradicts to the indecomposability of $E(\alpha)$. Hence $f$ is not a monomorphism. Assume that $H\not\subset\Ker f$. Then $\Im g\neq 0$. Since $E(\alpha)$ is uniform, we have $\Ker g=H\cap\Ker f\neq 0$ and $H\cap\Im g\neq 0$. Then we have
	\begin{equation*}
		\frac{H}{\Ker g}\cong \Im g\supset H\cap\Im g\subset H
 	\end{equation*}
	and this contradicts the monoformness of $H$. Therefore $H\subset\Ker f$, and this shows that $H\subset H_{\alpha}$.
	
	Assume that $H_{\alpha}$ is not monoform. Then there exist a nonzero subobject $N$ of $H_{\alpha}$ and a nonzero subobject $B$ of $H_{\alpha}$ which is isomorphic to a subobject $B'$ of $H_{\alpha}/N$. By the injectivity of $E(\alpha)$, we obtain a morphism $g'$ in $\mcA$ such that the diagram
	\begin{equation*}
		\newdir^{ (}{!/-5pt/@^{(}}
		\xymatrix{
			H_{\alpha}/N\ar^{g'}[r] & E(\alpha) \\
			& H_{\alpha}\ar@{^{ (}->}[u] \\
			B'\ar[r]^{\sim}\ar@{^{ (}->}[uu] & B\ar@{^{ (}->}[u]
		}
	\end{equation*}
	commutes. We also obtain a morphism $f'$ in $\mcA$ such that the diagram
	\begin{equation*}
		\newdir^{ (}{!/-5pt/@^{(}}
		\xymatrix{
			E(\alpha)\ar^{f'}[ddr] & \\
			H_{\alpha}\ar@{^{ (}->}[u]\ar@{>>}[d] & \\
			H_{\alpha}/N\ar^{g'}[r] & E(\alpha)
		}
	\end{equation*}
	commutes. Since $N\neq 0$, $f'$ is not a monomorphism, and hence $f'\in J_{\alpha}$. By the definition of $H_{\alpha}$, we have $H_{\alpha}\subset \Ker f'$. Then $g'=0$ and hence $B=0$. This is a contradiction. Therefore $H_{\alpha}$ is a unique maximal monoform subobject of $E(\alpha)$.
\end{proof}

We call the monoform object $H_{\alpha}$ in Theorem \ref{Thm:atomicobj} the \emph{atomic object} corresponding to the atom $\alpha$.

\section{Morphism spaces between atoms and objects}
\label{sec:morspaces}

Recall that a Grothendieck category $\mcS$ is called a \emph{spectral category} if any short exact sequence in $\mcS$ splits. Gabriel and Oberst \cite{GabrielOberst} defined the associated spectral category to a Grothendieck category $\mcA$, whose definition is based on the notion of essential subobjects.

Let $M$ be a nonzero object in $\mcA$. A subobject $N$ of $M$ is called an \emph{essential subobject} of $M$ if for any nonzero subobject $L$ of $M$, we have $L\cap N\neq 0$. Note that $M$ is uniform if and only if any nonzero subobject of $M$ is an essential subobject of $M$. We denote by $\mcF_{M}$ the set of all the essential subobjects of $M$. Since the intersection of two essential subobjects of $M$ is also an essential subobject of $M$, $\mcF_{M}$ is a directed set with respect to the opposite relation of the inclusion of subobjects.

\begin{Def}[Gabriel and Oberst {\cite{GabrielOberst}}]\label{Def:spectralcat}
	Define the \emph{spectral category} $\mcS$ of a Grothendieck category $\mcA$ as follows.
	\begin{enumerate}
		\item The objects of $\mcS$ are the same as the objects of $\mcA$.
		\item For objects $M$ and $N$ in $\mcA$,
		\begin{equation*}
			\Hom_{\mcS}(M,N)=\varinjlim_{M'\in\mcF_{M}}\Hom_{\mcA}(M',N).
		\end{equation*}
		\item Let $L,M,N$ be objects in $\mcA$ and $[f]\in\Hom_{\mcS}(L,M)$ and $[g]\in\Hom_{\mcS}(M,N)$. We assume that $[f]$ and $[g]$ are represented by $f\in\Hom_{\mcA}(L',M)$ and $g\in\Hom_{\mcA}(M',N)$, where $L'\in\mcF_{L}$, $M'\in\mcF_{M}$, respectively. Then the composite $[g][f]\in\Hom_{\mcS}(L,N)$ is the equivalence class of the composite of $f':f^{-1}(M')\to M'$ and $g:M'\to N$, where $f'$ is the restriction of $f$.
	\end{enumerate}
\end{Def}

Note that the inverse image of an essential subobject is also essential.

With the notation in Definition \ref{Def:spectralcat}, we can define a canonical additive functor $P:\mcA\to\mcS$ by the correspondence $M\mapsto M$ for each object $M$ in $\mcA$ and the canonical map $P_{M,N}:\Hom_{\mcA}(M,N)\to\varinjlim_{M'\in\mcF_{M}}\Hom_{\mcA}(M',N)$ for objects $M$ and $N$ in $\mcA$.

The following theorem states fundamental properties on the spectral category of $\mcA$.

\begin{Thm}[Gabriel and Oberst {\cite{GabrielOberst}}]\label{Thm:spectralcat1}
	Let $\mcA$ be a Grothendieck category, $\mcS$ the spectral category of $\mcA$, and $P:\mcA\to\mcS$ the canonical additive functor.
	\begin{enumerate}
		\item\label{item:spectralsplit} $\mcS$ is a spectral category, that is, $\mcS$ is a Grothendieck category such that any short exact sequence in $\mcS$ splits.
		\item\label{item:leftexactdirectsums} $P$ is a left exact functor and commutes with arbitrary direct sums.
	\end{enumerate}
\end{Thm}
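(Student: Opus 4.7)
The plan is to derive both parts of the theorem from the explicit filtered-colimit formula for $\Hom_{\mcS}$ in Definition \ref{Def:spectralcat}, together with the existence of complements of subobjects up to essential extension (which comes from injective hulls in the Grothendieck category $\mcA$).

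For part (\ref{item:leftexactdirectsums}), I would handle left exactness first. Given a short exact sequence $0\to L\to M\to N\to 0$ in $\mcA$ and any test object $X$, the sequence $0\to\Hom_{\mcA}(X',L)\to\Hom_{\mcA}(X',M)\to\Hom_{\mcA}(X',N)$ is exact for each $X'\in\mcF_{X}$, and filtered colimits of exact sequences of abelian groups are exact; this gives exactness of $0\to\Hom_{\mcS}(X,L)\to\Hom_{\mcS}(X,M)\to\Hom_{\mcS}(X,N)$ after one identifies the kernel in $\mcS$ of $P(f)$ with $P(\Ker f)$ by chasing representatives through the colimit. For direct sums, the key technical point is that if $M'_{i}\in\mcF_{M_{i}}$ for every $i\in I$, then $\bigoplus_{i}M'_{i}$ lies in $\mcF_{\bigoplus_{i}M_{i}}$; these direct sums form a cofinal subfamily of $\mcF_{\bigoplus_{i}M_{i}}$, so one can commute the colimit past the product of Hom groups and obtain $\Hom_{\mcS}(\bigoplus_{i}M_{i},N)\cong\prod_{i}\Hom_{\mcS}(M_{i},N)$, verifying the coproduct property in $\mcS$.

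For part (\ref{item:spectralsplit}), I would establish in order that $\mcS$ is additive, abelian, Grothendieck, and finally spectral. Additivity and the formulas $\Ker P(f)=P(\Ker f)$, $\Cok P(f)=P(\Cok f)$ follow from the same representative-chasing, using that every morphism in $\mcS$ is represented by a genuine $\mcA$-morphism after restricting its source to an essential subobject. A generator of $\mcS$ is produced from a generator of $\mcA$ via $P$, and AB5 in $\mcS$ reduces to AB5 in $\mcA$ by passing to the cofinal subsystem of essential subobjects described above. The splitting property is then the genuinely new input: given a monomorphism in $\mcS$, represent it by some $f\colon X'\to Y$ in $\mcA$ with $X'\in\mcF_{X}$, use Zorn's lemma in $\mcA$ to choose a subobject $C\subset Y$ maximal with $f(X')\cap C=0$, and note that $f(X')\oplus C\hookrightarrow Y$ is then essential in $\mcA$, hence becomes an isomorphism under $P$; this produces a splitting in $\mcS$, so every short exact sequence in $\mcS$ splits.

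The main obstacle, as I see it, is the representative-theoretic bookkeeping needed throughout: a morphism in $\mcS$ has many $\mcA$-representatives, and assertions like ``$\Ker P(f)=P(\Ker f)$'', ``$[g]$ is monic in $\mcS$'', or ``$\bigoplus M'_{i}\in\mcF_{\bigoplus M_{i}}$'' must be verified independently of the chosen representative and compatibly with restriction to smaller essential subobjects. Once these well-definedness checks are in place, all the structural properties of $\mcS$ transfer essentially formally from $\mcA$, the splitting being the one place where an injective-hull type argument (Zorn plus complement) is genuinely used.
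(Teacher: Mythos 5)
The paper provides no proof of this statement: it is attributed to Gabriel--Oberst and cited without argument, so there is no in-paper argument to compare your sketch against. Judged on its own merits, your outline captures the correct technical ingredients for much of the theorem (the filtered-colimit description of $\Hom_{\mcS}$, the cofinality of $\bigoplus_{i}U_{i}$ with $U_{i}\in\mcF_{M_{i}}$ inside $\mcF_{\bigoplus_{i}M_{i}}$, and Zorn's lemma for complements). However, one step is genuinely wrong: the claimed identity $\Cok P(f)=P(\Cok f)$ is false. The functor $P$ is left exact but not right exact; if that cokernel formula held, $P$ would be exact and the whole passage to the spectral category would be pointless. Concretely, in $\Mod\mbZ$ take $f$ to be multiplication by $2$ on $\mbZ$; then $2\mbZ$ is essential in $\mbZ$, so $P(f)$ factors through the essential inclusion $2\mbZ\hookrightarrow\mbZ$ and is an isomorphism in $\mcS$, giving $\Cok P(f)=0$, whereas $P(\Cok f)=P(\mbZ/2\mbZ)\neq 0$. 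So the strategy of establishing that $\mcS$ is abelian by pushing both kernels and cokernels through $P$ cannot work.

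The fix is to invert your order: prove the splitting property first and deduce cokernels from it. Your Zorn argument is sound once you first check that any monomorphism in $\mcS$ is represented by a monomorphism in $\mcA$ (test $[f]$ against the inclusion $\Ker f\hookrightarrow X$: composing gives $0$ in $\mcS$, and the inclusion of a nonzero kernel does not become $0$, so $\Ker f=0$). Then $\Cok P(f)\cong P(C)$ for a maximal complement $C$ of $\Im f$, which is in general different from $P(\Cok f)$, and from splitting you get that $\mcS$ is abelian and spectral. A further small gap is the claim that $P$ of a generator of $\mcA$ generates $\mcS$: again in $\Mod\mbZ$, $\Hom_{\mcS}(P(\mbZ),P(\mbZ/2\mbZ))=\varinjlim_{n}\Hom_{\mbZ}(n\mbZ,\mbZ/2\mbZ)=0$, so $P(\mbZ)$ is not a generator of the spectral category; one instead needs a suitably large object, e.g.\ a direct sum of enough injective hulls, to generate $\mcS$.
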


Gabriel and Oberst \cite{GabrielOberst} also showed the following results.

\begin{Thm}[Gabriel and Oberst {\cite{GabrielOberst}}]\label{Thm:spectralcat2}
	Let $\mcA$ be a Grothendieck category, $\mcS$ the spectral category of $\mcA$, and $P:\mcA\to\mcS$ the canonical additive functor.
	\begin{enumerate}
		\item\label{item:essiso} Let $M$ be an object in $\mcA$ and $N$ an essential subobject of $M$. Denote the inclusion morphism by $\nu:N\hookrightarrow M$. Then $P(\nu)$ is an isomorphism in $\mcS$.
		\item Let $M$ and $N$ be objects in $\mcA$. If $P(M)$ is isomorphic to $P(N)$, then there exists an essential subobject of $M$ which is isomorphic to an essential subobject of $N$.
		\item For any object $M$ in $\mcA$, $M$ is uniform if and only if $P(M)$ is simple.
		\item\label{item:indecinjsimple} The correspondence $I\mapsto P(I)$ gives a bijection between the isomorphism classes of indecomposable injective objects in $\mcA$ and the isomorphism classes of simple objects in $\mcS$.
	\end{enumerate}
\end{Thm}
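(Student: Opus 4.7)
I would prove the four parts in the stated order, with part (1) doing the bulk of the real work and the later parts following from it by formal manipulations. For (1), the strategy is to write down an explicit inverse to $P(\nu)$ in $\mcS$. Since $N\in\mcF_{N}$ trivially and $N\in\mcF_{M}$ by hypothesis, the identity $\id_{N}$ determines a class $[\id_{N}]\in\Hom_{\mcS}(M,N)=\varinjlim_{M'\in\mcF_{M}}\Hom_{\mcA}(M',N)$. Using the composition rule of Definition \ref{Def:spectralcat}(3), both composites $[\id_{N}]\circ P(\nu)$ and $P(\nu)\circ[\id_{N}]$ reduce, upon taking preimages and passing to $N$ as the distinguished essential subobject, to the classes of $\id_{N}$ and $\nu$ respectively, and these are the identities of $P(N)$ and $P(M)$ in $\mcS$.

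For (2), an isomorphism $P(M)\xrightarrow{\sim}P(N)$ is represented by some $f\colon M'\to N$ with $M'\in\mcF_{M}$ and its inverse by some $g\colon N'\to M$ with $N'\in\mcF_{N}$, subject to $[g][f]=\id_{P(M)}$ and $[f][g]=\id_{P(N)}$. Unpacking the composition rule, each equation asserts that after restriction to appropriate essential subobjects, such as $f^{-1}(N')\in\mcF_{M}$ and further intersections with the essential subobjects that witness the identities, the composite $g\circ f|_{-}$ agrees with the canonical inclusion on an essential subobject of $M$, and symmetrically for $N$. Since $\mcF_{M}$ and $\mcF_{N}$ are filtered, passing to a common refinement yields essential subobjects $M'''\subseteq M$ and $N'''\subseteq N$ on which $f$ and $g$ restrict to mutually inverse isomorphisms in $\mcA$.

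For (3), suppose first that $M$ is uniform. Any nonzero morphism $f\colon M'\to M$ with $M'\in\mcF_{M}$ has $M'$ uniform as well (as a subobject of a uniform object), so either $\Ker f=0$ or $\Ker f$ is essential in $M'$; in the latter case $P(f)=0$ by (1) applied to the inclusion $\Ker f\hookrightarrow M'$. If $f$ is mono, then $\Im f$ is a nonzero, hence essential, subobject of $M$, and (1) forces $P(f)$ to be an isomorphism. Thus $\End_{\mcS}(P(M))$ is a skew field, and since $\mcS$ is spectral by Theorem \ref{Thm:spectralcat1}(1), $P(M)$ must be simple. Conversely, if $M$ is not uniform, choose nonzero $N_{1},N_{2}\subseteq M$ with $N_{1}\cap N_{2}=0$. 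Left-exactness of $P$ (Theorem \ref{Thm:spectralcat1}(2)) preserves pullbacks, so $P(N_{1})\cap P(N_{2})=P(N_{1}\cap N_{2})=0$ inside $P(M)$; since each $P(N_{i})$ is nonzero (the identity of a nonzero object cannot become zero in the colimit, as no nonzero inclusion is the zero map), $P(N_{1})$ is a proper nonzero subobject of $P(M)$.

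For (4), an indecomposable injective $I$ is uniform, so $P(I)$ is simple by (3). Conversely, for simple $S\in\mcS$, write $S=P(M)$ for some $M\in\mcA$; replacing $M$ by its injective hull $E(M)$, part (1) gives $P(E(M))\cong S$, so $E(M)$ is uniform by (3), hence indecomposable. Injectivity of the correspondence follows from (2): if $P(I_{1})\cong P(I_{2})$, then $I_{1}$ and $I_{2}$ share isomorphic essential subobjects, and the injective hull of an object is determined up to isomorphism by any of its essential subobjects, so $I_{1}\cong I_{2}$. The main obstacle throughout is part (2), where the conversion of colimit-level identities into honest equalities in $\mcA$ by iterated restriction to essential subobjects is the delicate filtered-colimit argument; the other parts are then formal consequences of (1), the uniform/injective-hull dictionary, and the semisimplicity of $\mcS$.
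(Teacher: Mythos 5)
The paper does not prove Theorem~\ref{Thm:spectralcat2}; it is quoted from Gabriel--Oberst \cite{GabrielOberst} and used as a black box. So there is no in-text proof to compare your argument against. That said, your proof is correct and self-contained: part (1) via the explicit two-sided inverse $[\id_{N}]\in\Hom_{\mcS}(M,N)$ is exactly what the composition rule of Definition~\ref{Def:spectralcat} yields; for (2), the filtered-colimit unpacking does work — after choosing $M''\in\mcF_{M}$ with $gf|_{M''}$ the inclusion and $N''\in\mcF_{N}$ with $fg|_{N''}$ the inclusion, the subobjects $W=M''\cap f^{-1}(N'')$ and $W'=f(M'')\cap N''$ are essential and $f|_{W}\colon W\to W'$ is an isomorphism with inverse $g|_{W'}$; in (3) the forward direction correctly reduces to showing $\End_{\mcS}(P(M))$ is a skew field and then invokes semisimplicity of $\mcS$ (a nontrivial idempotent would contradict this), and the converse correctly uses left exactness of $P$ to preserve the zero intersection while $P$ of a nonzero object stays nonzero because no essential subobject of a nonzero object is zero; and (4) assembles (1)--(3) with the uniform/indecomposable-injective dictionary in the standard way. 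The only place I'd urge a touch more rigor if this were to be written out is part (2), where you should exhibit $W$ and $W'$ explicitly as above rather than appealing loosely to a "common refinement," but the underlying idea is sound and you flag this as the delicate step yourself.
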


In the rest of this section, let $\mcA$ be a locally noetherian Grothendieck category, $\mcS$ the spectral category of $\mcA$, $P:\mcA\to\mcS$ the canonical additive functor, and $\alpha$ an atom in $\mcA$.

\begin{rem}\leavevmode
	\begin{enumerate}
		\item For any object $M$ in $\mcA$, as shown in \cite[Theorem 2.5]{Matlis}, $E(M)$ is a direct sum of indecomposable injective objects in $\mcA$. By Theorem \ref{Thm:spectralcat2} (\ref{item:essiso}), $P(M)$ is isomorphic to $P(E(M))$. Hence by Theorem \ref{Thm:spectralcat1} (\ref{item:leftexactdirectsums}) and Theorem \ref{Thm:spectralcat2} (\ref{item:indecinjsimple}), $P(M)$ is isomorphic to a direct sum of simple objects in $\mcS$.
		\item Since $E(\alpha)$ is an indecomposable injective object in $\mcA$, by Theorem \ref{Thm:spectralcat2} (\ref{item:indecinjsimple}), $P(E(\alpha))$ is a simple object in $\mcS$. Therefore $\End_{\mcS}(P(E(\alpha)))$ is a skew field.
	\end{enumerate}
\end{rem}

In terms of the spectral category, we define the morphism space between an atom and an object in $\mcA$. For a ring $R$, $\Mod R$ denotes the category of right $R$-modules.

\begin{Def}\label{Def:morspace}
	Let $\mcA$ be a locally noetherian Grothendieck category, $\mcS$ the spectral category of $\mcA$, $P:\mcA\to\mcS$ the canonical additive functor, and $\alpha$ an atom in $\mcA$.
	\begin{enumerate}
		\item Denote the skew field $\End_{\mcS}(P(E(\alpha)))$ by $k(\alpha)$ and call it the \emph{residue field} of $\alpha$.
		\item Define an additive functor $\Hom_{\mcA}(\alpha,-):\mcA\to\Mod k(\alpha)$ by
		\begin{equation*}
			\Hom_{\mcA}(\alpha,-)=\Hom_{\mcS}(P(E(\alpha)),P(-)).
		\end{equation*}
	\end{enumerate}
\end{Def}

\begin{rem}\label{rem:morspace}\leavevmode
	\begin{enumerate}
		\item\label{item:cofinal} For any nonzero subobject $U'$ of $E(\alpha)$, since $E(\alpha)$ is uniform, $\mcF_{U'}$ is the set of all the nonzero subobjects of $U'$, and it is a cofinal subset of $\mcF_{E(\alpha)}$. Hence we obtain a functorial isomorphism
	\begin{equation*}
		\Hom_{\mcA}(\alpha,-)\cong\varinjlim_{U\in\mcF_{U'}}\Hom_{\mcA}(U,-)
	\end{equation*}
	of additive functors $\mcA\to\Mod k(\alpha)$. If we take $U'$ as $H_{\alpha}$, then $\mcF_{U'}$ is the set of all the monoform subobjects of $E(\alpha)$. This is the reason why we denote this additive functor by $\Hom_{\mcA}(\alpha,-)$.
		\item By Theorem \ref{Thm:spectralcat1} (\ref{item:leftexactdirectsums}), the additive functor $\Hom_{\mcA}(\alpha,-)$ commutes with arbitrary direct sums since $P(E(\alpha))$ is a simple object in $\mcS$.
		\item\label{item:skewfield} For a nonzero subobject $U$ of $E(\alpha)$, an object $M$ in $\mcA$, and a morphism $f:U\to M$ in $\mcA$, we denote the image of $f$ in $\Hom_{\mcA}(\alpha,M)$ by $[f]$.
		
		In the case where $M$ is a nonzero subobject $U'$ of $E(\alpha)$, the composite of $f$ and the inclusion morphism $U'\hookrightarrow E(\alpha)$ defines an element of $k(\alpha)$. We also denote it by $[f]$ by abuse of notation. By Theorem \ref{Thm:spectralcat2} (\ref{item:essiso}), we have
		\begin{equation*}
			k(\alpha)\cong\Hom_{\mcS}(P(U),P(U')).
		\end{equation*}
		This implies that
		\begin{equation*}
			k(\alpha)=\{[f]\mid f:U\to U'\text{ in }\mcA,\ U\in\mcF_{E(\alpha)}\}.
		\end{equation*}
	\end{enumerate}
\end{rem}

If there exists a simple object $S$ in $\mcA$ such that $\alpha=\overline{S}$, by Remark \ref{rem:morspace} (\ref{item:cofinal}), we have an isomorphism $k(\alpha)\cong\End_{\mcA}(S)$ of skew fields and a functorial isomorphism
\begin{equation*}
	\Hom_{\mcA}(\alpha,-)\cong\Hom_{\mcA}(S,-)
\end{equation*}
of additive functors $\mcA\to\Mod k(\alpha)$. In general, however, the additive functor $\Hom_{\mcA}(\alpha,-):\mcA\to\Mod\mbZ$ is not necessarily representable.

\begin{Prop}
	The additive functor $\Hom_{\mcA}(\alpha,-):\mcA\to\Mod\mbZ$ is representable if and only if there exists a simple object $S$ in $\mcA$ such that $\alpha=\overline{S}$.
\end{Prop}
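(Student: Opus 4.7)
The plan is to prove both implications, the $(\Leftarrow)$ direction being essentially already observed in the paragraph immediately preceding the statement: if $\alpha=\overline{S}$ for a simple object $S$, then Remark \ref{rem:morspace}(\ref{item:cofinal}) yields the functorial isomorphism $\Hom_{\mcA}(\alpha,-)\cong\Hom_{\mcA}(S,-)$ of functors $\mcA\to\Mod k(\alpha)$, and composing with the forgetful functor $\Mod k(\alpha)\to\Mod\mbZ$ exhibits $S$ as a representing object for $\Hom_{\mcA}(\alpha,-)$ viewed as a functor to $\Mod\mbZ$.

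For the harder direction $(\Rightarrow)$, I would suppose a natural isomorphism $\eta\colon\Hom_{\mcA}(S,-)\xrightarrow{\sim}\Hom_{\mcA}(\alpha,-)$ is given and chase $\id_S$ through it. Set $u:=\eta_{S}(\id_{S})\in\Hom_{\mcA}(\alpha,S)$; by the definition of the morphism space, $u=[f]$ for some $f\colon U\to S$ in $\mcA$ with $U$ a nonzero (hence essential) subobject of $E(\alpha)$, and naturality forces $\eta_{M}(g)=[g\circ f]$ for every $g\in\Hom_{\mcA}(S,M)$.

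The first step will be to note that $\Hom_{\mcA}(\alpha,H_{\alpha})$ is nonzero, since it contains $[\id_{H_{\alpha}}]$; therefore $S\neq 0$ and $u\neq 0$. Because $U$ is uniform (being a subobject of the indecomposable injective $E(\alpha)$), a class $[h\colon V\to M]$ in $\Hom_{\mcA}(\alpha,M)$ vanishes exactly when $\Ker h\neq 0$. Two applications of this criterion do the main work: from $[f]=u\neq 0$ one concludes that $f$ is a monomorphism, and the injectivity of $\eta_{M}$ translates into the implication that $\Ker g\cap f(U)\neq 0$ forces $g=0$ for every morphism $g$ out of $S$.

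I would then exploit this implication twice. First, apply it with $g$ the canonical epimorphism $S\to S/f(U)$: its kernel is $f(U)$, which meets $f(U)$ nontrivially, so $g=0$ and therefore $f(U)=S$, making $f$ an isomorphism $U\cong S$; in particular $S$ is (isomorphic to) a nonzero subobject of $E(\alpha)$. Second, for any nonzero subobject $T$ of $S$ the intersection $T\cap f(U)=T\cap S=T$ is nonzero, so applying the implication to the quotient $S\to S/T$ gives $T=S$, showing $S$ is simple. Finally, since $S$ is an essential subobject of the indecomposable injective $E(\alpha)$, we have $E(S)=E(\alpha)$, and the bijection of Theorem \ref{Thm:atominj}(\ref{item:atomindecinj}) forces $\overline{S}=\alpha$. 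The main obstacle is recognizing that these constraints on $S$ can all be extracted from a single universal element; the key device is producing $f\colon U\to S$ by tracking $\id_{S}$, after which the vanishing criterion in the direct limit does almost everything.
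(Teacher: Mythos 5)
Your proof is correct, and the forward implication takes a genuinely different route from the paper. The paper does not attempt to show the representing object $U$ is itself simple; instead it picks a nonzero noetherian subobject $V\subset U$, a simple quotient $V\twoheadrightarrow S$, extends to a nonzero map $U\to E(S)$ by injectivity, concludes $\Hom_{\mcS}(P(E(\alpha)),P(E(S)))\neq 0$, and invokes the spectral-category correspondence of Theorem \ref{Thm:spectralcat2} (\ref{item:indecinjsimple}) together with Theorem \ref{Thm:atominj} (\ref{item:atomindecinj}) to force $E(\alpha)\cong E(S)$ and hence $\alpha=\overline{S}$. You instead apply the Yoneda lemma directly to the assumed isomorphism $\Hom_{\mcA}(S,-)\cong\Hom_{\mcA}(\alpha,-)$, extract the universal element $u=[f]\in\Hom_{\mcA}(\alpha,S)$, and leverage Proposition \ref{Prop:nonzeroiffmono} (the vanishing criterion $[h]=0\iff\Ker h\neq 0$) to turn injectivity of $\eta_{M}$ into the principle ``$\Ker g\cap f(U)\neq 0\Rightarrow g=0$.'' Applying this to $S\to S/f(U)$ gives $U\cong S$, applying it to $S\to S/T$ gives simplicity of $S$, and the conclusion $\alpha=\overline{S}$ then follows from $S$ being an essential (monoform) subobject of $E(\alpha)$. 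Your version is more elementary in that it avoids both the noetherianity of the category (no noetherian subobject is needed) and the spectral category machinery, and it yields the slightly sharper statement that the representing object itself must be simple. The paper's version is shorter, at the cost of routing through a simple quotient rather than the representing object and relying on the classification of simple objects in $\mcS$.
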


\begin{proof}
	Assume that $\Hom_{\mcA}(\alpha,-):\mcA\to\Mod\mbZ$ is representable, that is, there exists a functorial isomorphism
	\begin{equation*}
		\Hom_{\mcA}(\alpha,-)\cong\Hom_{\mcA}(U,-)
	\end{equation*}
	of additive functors $\mcA\to\Mod\mbZ$ for some object $U$ in $\mcA$. Then there exist a nonzero noetherian subobject $V$ of $U$ and a simple quotient object $S$ of $V$. We obtain a morphism $f$ in $\mcA$ such that the diagram
	\begin{equation*}
		\newdir^{ (}{!/-5pt/@^{(}}
		\xymatrix{
			U\ar^(0.4){f}[r] & E(S) \\
			V\ar@{^{ (}->}[u]\ar@{>>}[r] & S\ar@{^{ (}->}[u]
		}
	\end{equation*}
	commutes. Since $f$ is nonzero, we have
	\begin{equation*}
		\Hom_{\mcS}(P(E(\alpha)),P(E(S)))=\Hom_{\mcA}(\alpha,E(S))\cong\Hom_{\mcA}(U,E(S))\neq 0.
	\end{equation*}
	By Theorem \ref{Thm:spectralcat2} (\ref{item:indecinjsimple}), we have $E(\alpha)\cong E(S)\cong E(\overline{S})$. By Theorem \ref{Thm:atominj} (\ref{item:atomindecinj}), it follows that $\alpha=\overline{S}$.
\end{proof}

We show a property about the morphism space between an atom and an object.

\begin{Prop}\label{Prop:nonzeroiffmono}
	Let $U$ be a nonzero subobject of $E(\alpha)$, $M$ an object in $\mcA$, and $f:U\to M$ a morphism in $\mcA$. Then $[f]\neq 0$ in $\Hom_{\mcA}(\alpha,M)$ if and only if $f$ is a monomorphism.
\end{Prop}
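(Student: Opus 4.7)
The plan is to unwind $[f]$ through the directed-colimit description of Remark \ref{rem:morspace}(\ref{item:cofinal}) and reduce the question to a statement about $\Ker f$. First, I would observe that $E(\alpha)$ is uniform: by Theorem \ref{Thm:atomicobj} it contains the monoform subobject $H_{\alpha}$ essentially, and monoform objects are uniform by Proposition \ref{Prop:monoform}(\ref{item:monoformuniform}), so $E(\alpha)$ is uniform as well. Hence the subobject $U\subseteq E(\alpha)$ is uniform, $\mcF_{U}$ coincides with the poset of \emph{all} nonzero subobjects of $U$, and it is cofinal in $\mcF_{E(\alpha)}$. Applying Remark \ref{rem:morspace}(\ref{item:cofinal}) with $U'$ taken to be $U$ identifies $[f]$ with the class of $f$ in $\varinjlim_{U'\in\mcF_{U}}\Hom_{\mcA}(U',M)$.

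Next, I would invoke the standard description of vanishing elements in a directed colimit of abelian groups: $[f]=0$ if and only if there exists a nonzero subobject $U'\subseteq U$ with $f|_{U'}=0$, equivalently $U'\subseteq\Ker f$. By uniformity of $U$ (any nonzero subobject meets any other), such a nonzero $U'$ can be found inside $\Ker f$ precisely when $\Ker f\neq 0$, i.e., precisely when $f$ fails to be a monomorphism. Contraposing gives the desired equivalence $[f]\neq 0 \iff f$ is a monomorphism.

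I do not expect any genuine obstacle here; the proof is a direct unwinding of the colimit formula. The only subtlety is ensuring that essential subobjects of $U$ may be replaced by arbitrary nonzero subobjects, which is exactly the uniformity inherited from $E(\alpha)$, and that Remark \ref{rem:morspace}(\ref{item:cofinal}) is applied with the particular choice $U'=U$ so that the given $f$ already represents an element of the colimit.
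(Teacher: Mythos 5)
Your proof is correct and follows essentially the same route as the paper: identify $\Hom_{\mcA}(\alpha,M)$ with a directed colimit over nonzero subobjects of $U$ (using that $E(\alpha)$ is uniform, so essential equals nonzero), then observe that $[f]=0$ in the colimit exactly when some nonzero $U'\subset U$ lies in $\Ker f$, i.e.\ exactly when $\Ker f\neq 0$. The only small inefficiency is invoking uniformity again in the last step — once the question is reduced to whether $\Ker f$ contains a nonzero subobject, that is trivially equivalent to $\Ker f\neq 0$ without any further appeal to uniformity.
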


\begin{proof}
	By the definition of direct limit, $[f]=0$ if and only if there exists a nonzero subobject $U'$ of $U$ such that $U'\subset \Ker f$. This condition is equivalent to that $f$ is not a monomorphism.
\end{proof}

The residue field $k(\alpha)$ of an atom $\alpha$ in $\mcA$ appears in several contexts related to the atom $\alpha$. In order to show that, we prove the following lemma.

\begin{Lem}\label{Lem:moratomicinj}
	Let $H\in\alpha$ and $f:H\to E(\alpha)$ be a morphism in $\mcA$. Then we have $\Im f\subset H_{\alpha}$.
\end{Lem}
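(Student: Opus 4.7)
The plan is to show that $f$ must be a monomorphism, after which $\Im f$ is monoform and we can invoke the argument already used inside the proof of Theorem \ref{Thm:atomicobj}.

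First I would produce a "bridge" between $H$ and $E(\alpha)$. Since $H\in\alpha=\overline{H_{\alpha}}$, the definition of atom-equivalence supplies a common nonzero subobject $W$ of $H$ and $H_{\alpha}$; in particular $W$ embeds in $E(\alpha)$. Note also that $E(\alpha)$ is uniform: it is the injective hull of the uniform object $H_{\alpha}$ (uniformness of $H_{\alpha}$ comes from Proposition \ref{Prop:monoform}(\ref{item:monoformuniform})), and the injective hull of a uniform object in a Grothendieck category is uniform by the usual trace-on-essential-subobject argument.

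Next I would show $\Ker f=0$. Assume $\Im f\neq 0$ (otherwise the claim is trivial). Uniformness of $E(\alpha)$ forces $V:=W\cap\Im f\neq 0$. Then $V\subset W\subset H$ exhibits $V$ as a nonzero subobject of $H$, while $V\subset\Im f\cong H/\Ker f$ exhibits the same $V$ as a subobject of $H/\Ker f$. If $\Ker f$ were nonzero, this would contradict monoformness of $H$. Hence $\Ker f=0$, so $f$ is a monomorphism and $\Im f\cong H$ is monoform.

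Finally, $\Im f$ is a monoform subobject of $E(\alpha)$, and in the first half of the proof of Theorem \ref{Thm:atomicobj} it is established that every monoform subobject of $E(\alpha)$ is contained in $H_{\alpha}$. Applying this to $\Im f$ gives $\Im f\subset H_{\alpha}$, as required.

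The only real subtlety is the very first step: since $f$ is an arbitrary morphism (not the inclusion of a chosen embedding of $H$ into $E(\alpha)$), we cannot directly compare $H$ and $\Im f$ inside $E(\alpha)$; the common subobject $W$ combined with uniformness of $E(\alpha)$ is what produces a subobject lying in both $H$ and $\Im f$, which is what monoformness of $H$ can then be applied to.
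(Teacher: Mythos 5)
Your proof is correct and takes essentially the same route as the paper: the paper picks a subobject $H'$ of $E(\alpha)$ isomorphic to $H$ (using $E(\alpha)\cong E(H)$) and intersects it with $\Im f$, whereas you take a common nonzero subobject $W$ of $H$ and $H_\alpha$ and intersect $W$ with $\Im f$ inside $E(\alpha)$; in both cases uniformness of $E(\alpha)$ plus monoformness of $H$ force $\Ker f=0$, and the final containment $\Im f\subset H_{\alpha}$ is quoted from the first half of the proof of Theorem~\ref{Thm:atomicobj} exactly as the paper does.
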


\begin{proof}
	Since $E(\alpha)$ is isomorphic to $E(H)$, there exists a subobject $H'$ of $E(\alpha)$ which is isomorphic to $H$. Assume that $f$ is a nonzero morphism which is not a monomorphism. Then we have $\Ker f\neq 0$ and $\Im f\neq 0$, and hence, by the uniformness of $E(\alpha)$, we have $\Im f\cap H'\neq 0$. Then
	\begin{equation*}
		\frac{H}{\Ker f}\cong\Im f\supset\Im f\cap H'\subset H'\cong H
	\end{equation*}
	implies that $H$ is not monoform. This is a contradiction. Therefore $f$ is a monomorphism or a zero morphism, and hence $\Im f$ is monoform or zero. This implies that $\Im f\subset H_{\alpha}$.
\end{proof}

\begin{Prop}\label{Prop:residuefield}\leavevmode
	\begin{enumerate}
		\item\label{item:atomicendo} We have an isomorphism $\End_{\mcA}(H_{\alpha})\cong k(\alpha)$ of skew fields.
		\item\label{item:injendo} Let $J_{\alpha}$ be the unique maximal ideal of the local ring $\End_{\mcA}(E(\alpha))$. Then we have an isomorphism
		\begin{equation*}
			\frac{\End_{\mcA}(E(\alpha))}{J_{\alpha}}\cong k(\alpha)
		\end{equation*}
		of skew fields.
	\end{enumerate}
\end{Prop}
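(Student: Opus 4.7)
The plan is to identify each of the rings $\End_{\mcA}(H_{\alpha})$ and $\End_{\mcA}(E(\alpha))/J_{\alpha}$ with $k(\alpha)$ by comparing them to the direct-limit description
\[
k(\alpha) \;\cong\; \varinjlim_{U \in \mcF_{E(\alpha)}} \Hom_{\mcA}(U, E(\alpha))
\]
from Remark \ref{rem:morspace}, using Lemma \ref{Lem:moratomicinj} to guarantee that the relevant morphisms into $E(\alpha)$ factor through $H_\alpha$.

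For (1), let $i : H_\alpha \hookrightarrow E(\alpha)$ denote the inclusion. Since $H_\alpha$ is essential in $E(\alpha)$, Theorem \ref{Thm:spectralcat2}(\ref{item:essiso}) makes $P(i)$ an isomorphism in $\mcS$, so the assignment $\phi : f \mapsto P(i) \circ P(f) \circ P(i)^{-1}$ defines a ring homomorphism $\End_\mcA(H_\alpha) \to k(\alpha)$, which in the colimit presentation is simply $f \mapsto [i \circ f]$. For injectivity, if $[i \circ f] = 0$, then Proposition \ref{Prop:nonzeroiffmono} implies $i \circ f$ is not a monomorphism, so neither is $f$; but a nonzero endomorphism of the monoform object $H_\alpha$ must be a monomorphism (otherwise $H_\alpha/\Ker f \cong \Im f$ contradicts monoformness), so $f = 0$. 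For surjectivity, represent an element of $k(\alpha)$ by some $g : U \to E(\alpha)$ with $U$ an essential subobject of $H_\alpha$ (allowed by the cofinality in Remark \ref{rem:morspace}(\ref{item:cofinal})), extend $g$ to $\tilde g \in \End_\mcA(E(\alpha))$ using the injectivity of $E(\alpha)$, and observe via Lemma \ref{Lem:moratomicinj} that $\tilde g|_{H_\alpha}$ has image inside $H_\alpha$; the resulting $f := \tilde g|_{H_\alpha} \in \End_\mcA(H_\alpha)$ then satisfies $\phi(f) = [g]$.

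For (2), Lemma \ref{Lem:moratomicinj} tells us that every $g \in \End_\mcA(E(\alpha))$ restricts to an endomorphism of $H_\alpha$, yielding a ring homomorphism $\rho : \End_\mcA(E(\alpha)) \to \End_\mcA(H_\alpha)$; the same extend-by-injectivity argument used in (1) shows $\rho$ is surjective. To compute $\ker \rho$, observe that if $\Ker g \neq 0$ then by uniformness of $E(\alpha)$ we have $H_\alpha \cap \Ker g \neq 0$, so $g|_{H_\alpha}$ has nonzero kernel; by the same monoform argument as in the proof of Lemma \ref{Lem:moratomicinj}, any nonzero morphism from $H_\alpha$ to $E(\alpha)$ is a monomorphism, so $g|_{H_\alpha} = 0$. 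Conversely $g|_{H_\alpha} = 0$ trivially gives $\Ker g \supseteq H_\alpha \neq 0$. Hence $\ker \rho = \{g : \Ker g \neq 0\}$, and since $E(\alpha)$ is an indecomposable injective object any nonzero endomorphism with zero kernel is automatically an automorphism (a split monomorphism onto the indecomposable $E(\alpha)$ must be an isomorphism), so $\ker \rho = J_\alpha$. Composing $\rho$ with the isomorphism from (1) yields (2).

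I do not anticipate a substantive obstacle; the main subtlety is just careful bookkeeping of which domains and direct-limit representatives are in play, in particular when verifying that $\phi$ is a ring (and not merely additive) homomorphism and when matching classes defined on different essential subobjects of $E(\alpha)$.
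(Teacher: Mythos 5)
Your proposal is correct and follows essentially the same approach as the paper: both define the map on $\End_{\mcA}(H_{\alpha})$ by passing to $\mcS$, use monoformness of $H_{\alpha}$ for injectivity, and use injectivity of $E(\alpha)$ plus Lemma~\ref{Lem:moratomicinj} for surjectivity. The only cosmetic difference is in part (2), where you factor the comparison through the restriction map $\End_{\mcA}(E(\alpha))\to\End_{\mcA}(H_{\alpha})$ and explicitly identify its kernel with the non-monomorphisms, while the paper works directly with the canonical map $\psi:\End_{\mcA}(E(\alpha))\to k(\alpha)$ and invokes the localness of $\End_{\mcA}(E(\alpha))$ to conclude $\Ker\psi=J_{\alpha}$; these are the same map in disguise.
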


\begin{proof}
	(1) For any morphism $f:H_{\alpha}\to H_{\alpha}$ in $\mcA$, we have an element $[f]$ of $k(\alpha)$. This correspondence defines a ring homomorphism $\varphi:\End_{\mcA}(H_{\alpha})\to k(\alpha)$.
	
	If $f$ is nonzero, then we have $\Im f\neq 0$ and
	\begin{equation*}
		\frac{H_{\alpha}}{\Ker f}\cong\Im f\subset H_{\alpha}.
	\end{equation*}
	Since $H_{\alpha}$ is monoform, we have $\Ker f=0$. Hence $f$ is a monomorphism and by Proposition \ref{Prop:nonzeroiffmono}, we have $[f]\neq 0$. This shows that $\varphi$ is injective.
	
	In order to show that $\varphi$ is surjective, let $H$ be a monoform subobject of $E(\alpha)$ and $g:H\to E(\alpha)$ a morphism in $\mcA$ such that $[g]\neq 0$. By the injectivity of $E(\alpha)$, we obtain a morphism $g'$ in $\mcA$ such that the diagram
	\begin{equation*}
		\newdir^{ (}{!/-5pt/@^{(}}
		\xymatrix{
			H_{\alpha}\ar^{g'}[dr] & \\
			H\ar@{^{ (}->}[u]\ar^(0.4){g}[r] & E(\alpha)
		}
	\end{equation*}
	commutes. By Lemma \ref{Lem:moratomicinj}, we obtain a morphism $g''$ in $\mcA$ such that the diagram
	\begin{equation*}
		\newdir^{ (}{!/-5pt/@^{(}}
		\xymatrix{
			H_{\alpha}\ar^{g''}[r]\ar_{g'}[dr] & H_{\alpha}\ar@{^{ (}->}[d]\\
			 & E(\alpha)
		}
	\end{equation*}
	commutes. Then $[g'']=[g']=[g]$ in $k(\alpha)$. This shows that $\varphi$ is surjective.
	
	(2) The localness of $\End_{\mcA}(E(\alpha))$ is shown in \cite[Proposition 2.6]{Matlis}.
	
	We have a canonical ring homomorphism $\psi:\End_{\mcA}(E(\alpha))\to k(\alpha)$. For any nonzero subobject $U$ of $E(\alpha)$, the injectivity of $E(\alpha)$ ensures that every morphism $U\to E(\alpha)$ can be lifted to some endomorphism of $E(\alpha)$, and hence $\psi$ is surjective. The kernel of $\psi$ is the unique maximal two-sided ideal $J_{\alpha}$ since $k(\alpha)$ is a skew field.
\end{proof}

In the case where $\mcA=\Mod R$ for a commutative noetherian ring $R$, each prime ideal of $R$ defines an atom $\alpha=\overline{R/\mfp}$ in $\ASpec(\Mod R)$. As in \cite{Storrer}, the corresponding atomic object $H_{\alpha}$ is the residue field $k(\mfp)$ of $\mfp$. Hence the residue field $k(\alpha)\cong\End_{R}(k(\mfp))$ is also $k(\mfp)$. In general, however, for an atom $\alpha$ in $\mcA$, the atomic object $H_{\alpha}$ corresponding to $\alpha$ and the residue field $k(\alpha)$ of $\alpha$ do not necessarily coincide with each other.

\begin{ex}
	Let $R$ be the ring of $2\times 2$ lower triangular matrices
	\begin{equation*}
		\begin{bmatrix} K & 0 \\ K & K \end{bmatrix}
	\end{equation*}
	over a field $K$. The right $R$-module $S=[K\ 0]$ is simple, and hence monoform. The atomic object corresponding to $\alpha=\overline{S}$ is $H_{\alpha}=[K\ K]$. However, the residue field $k(\alpha)$ of $\alpha$ is isomorphic to $K$.
\end{ex}

In the rest of this section, we consider a relationship to associated atoms, which were introduced by Storrer \cite{Storrer} in the case of a module category. In the case of an abelian category, they are stated in \cite{Kanda}.

\begin{Def}\label{Def:assatom}
	Let $\mcA$ be a locally noetherian Grothendieck category. For an object $M$ in $\mcA$, a subset $\AAss M$ of $\ASpec\mcA$ is defined by
	\begin{equation*}
		\AAss M=\{\alpha\in\ASpec\mcA\mid\text{there exists }H\in\alpha\text{ which is a subobject of }M\}.
	\end{equation*}
	An element of $\AAss M$ is called an \emph{associated atom} of $M$.
\end{Def}

\begin{Prop}\label{Prop:assmor}
	For any object $M$ in $\mcA$ and any atom $\alpha$ in $\mcA$, $\alpha\in\AAss M$ if and only if $\Hom_{\mcA}(\alpha,M)\neq 0$.
\end{Prop}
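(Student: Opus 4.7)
The plan is to prove both directions by unpacking the direct-limit description of $\Hom_{\mcA}(\alpha,-)$ from Remark~\ref{rem:morspace} and then invoking Proposition~\ref{Prop:nonzeroiffmono} to toggle between non-vanishing of this space and existence of a monomorphism from a nonzero subobject of $E(\alpha)$ into $M$.

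For the forward direction, suppose $\alpha\in\AAss M$, so we have $H\in\alpha$ together with a monomorphism $H\hookrightarrow M$. Since $\overline{H}=\alpha$, Theorem~\ref{Thm:atominj}~(\ref{item:injenvofatom}) gives $E(H)\cong E(\alpha)$, and composing the inclusion $H\hookrightarrow E(H)$ with this isomorphism produces a subobject $H'$ of $E(\alpha)$ isomorphic to $H$, together with an induced monomorphism $f\colon H'\to M$. Then $[f]\in\Hom_{\mcA}(\alpha,M)$ is nonzero by Proposition~\ref{Prop:nonzeroiffmono}.

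For the converse, suppose $\Hom_{\mcA}(\alpha,M)\neq 0$. By Remark~\ref{rem:morspace}~(\ref{item:cofinal}) (applied with $U'=E(\alpha)$), every element of $\Hom_{\mcA}(\alpha,M)$ is represented by a morphism $f\colon U\to M$ for some nonzero subobject $U$ of $E(\alpha)$, and Proposition~\ref{Prop:nonzeroiffmono} says a nonzero element is represented by a monomorphism. Now use Proposition~\ref{Prop:monoform}~(\ref{item:submonoform}) to pick a monoform subobject $H\subset U$. Since $U$ is nonzero in the uniform object $E(\alpha)$, it is essential in $E(\alpha)$, and hence so is $H$; therefore $E(H)\cong E(\alpha)$, and by the bijection of Theorem~\ref{Thm:atominj}~(\ref{item:atomindecinj}) we conclude $\overline{H}=\alpha$. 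The restriction $f|_{H}\colon H\to M$ is a monomorphism, so its image $f(H)$ is a subobject of $M$ isomorphic to $H$, hence monoform with $\overline{f(H)}=\overline{H}=\alpha$. This exhibits a representative of $\alpha$ as a subobject of $M$, proving $\alpha\in\AAss M$.

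There is no real obstacle here; the only thing worth double-checking is the use of $E(H)\cong E(\alpha)$ to replace an abstract $H\in\alpha$ by an actual subobject of $E(\alpha)$ in the forward direction, and conversely to identify the atom class of a monoform subobject of $U\subset E(\alpha)$ via injective hulls in the converse direction. Both reductions rest solely on Theorem~\ref{Thm:atominj} and the uniformness of $E(\alpha)$, so the proof is essentially a direct unwinding of the definitions.
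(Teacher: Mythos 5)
Your argument is correct and is exactly the intended unwinding of the paper's one-line proof, which simply cites Proposition~\ref{Prop:nonzeroiffmono}: in both directions you translate non-vanishing of $\Hom_{\mcA}(\alpha,M)$ into the existence of a monomorphism from a nonzero subobject of $E(\alpha)$ into $M$, and then shuttle between that and the existence of a monoform subobject of $M$ representing $\alpha$. The small simplifications available --- e.g.\ in the converse one can take $H=U\cap H_{\alpha}$, which is automatically monoform and automatically atom-equivalent to $H_{\alpha}$ without invoking the bijection of Theorem~\ref{Thm:atominj}~(\ref{item:atomindecinj}) --- are cosmetic; your route via injective hulls is equally valid and matches the spirit of the paper.
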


\begin{proof}
	This follows from Proposition \ref{Prop:nonzeroiffmono}.
\end{proof}

We recall a fundamental result concerning associated atoms.

\begin{Prop}\label{Prop:assexact}
	Let $0\to L\to M\to N\to 0$ be an exact sequence in $\mcA$. Then we have
	\begin{equation*}
		\AAss L\subset\AAss M\subset\AAss L\cup\AAss N.
	\end{equation*}
\end{Prop}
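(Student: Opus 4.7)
The first inclusion is essentially definitional: if $\alpha \in \AAss L$, then some $H \in \alpha$ is a subobject of $L$, and composing with the monomorphism $L \hookrightarrow M$ exhibits $H$ as a subobject of $M$, so $\alpha \in \AAss M$.

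For the second inclusion, I would take $\alpha \in \AAss M$ and choose $H \in \alpha$ with $H$ a subobject of $M$. The natural move is to intersect $H$ with $L$ inside $M$ and split into two cases depending on whether $H \cap L$ is zero or nonzero.

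If $H \cap L \neq 0$, then $H \cap L$ is a nonzero subobject of the monoform object $H$, so by Proposition \ref{Prop:monoform} (\ref{item:monoformsub}) it is itself monoform, and since $H$ and $H \cap L$ share the common nonzero subobject $H \cap L$, they are atom-equivalent, so $\overline{H \cap L} = \alpha$. As $H \cap L$ is a subobject of $L$, this gives $\alpha \in \AAss L$. If instead $H \cap L = 0$, then the composite $H \hookrightarrow M \twoheadrightarrow N$ has zero kernel, so $H$ embeds into $N$, giving $\alpha \in \AAss N$.

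There is no real obstacle here: the proof is a one-step diagram chase once one recognizes that intersecting with $L$ produces the correct dichotomy, and both Proposition \ref{Prop:monoform} (\ref{item:monoformsub}) and the definition of atom equivalence are tailor-made to handle the first case.
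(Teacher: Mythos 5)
Your proof is correct, and it is the standard argument — the same one used in \cite[Proposition 3.5]{Kanda}, which the paper simply cites here. The case split on whether $H\cap L$ is zero or nonzero, together with Proposition~\ref{Prop:monoform}~(\ref{item:monoformsub}) to see $H\cap L$ is monoform and atom-equivalent to $H$, is exactly the intended route.
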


\begin{proof}
	\cite[Proposition 3.5]{Kanda}.
\end{proof}

\section{Extension groups between atoms and objects}
\label{sec:extgroups}

Throughout this section, let $\mcA$ be a locally noetherian Grothendieck category, $\mcS$ the spectral category of $\mcA$, $P:\mcA\to\mcS$ the canonical additive functor, and $\alpha$ an atom in $\mcA$.

By Theorem \ref{Thm:spectralcat1} (\ref{item:spectralsplit}), the additive functor $\Hom_{\mcS}(P(E(\alpha)),-):\mcS\to\Mod k(\alpha)$ is exact. Since $P$ is a left exact functor, the additive functor $\Hom_{\mcA}(\alpha,-)=\Hom_{\mcS}(P(E(\alpha)),P(-)):\mcA\to\Mod k(\alpha)$ is left exact.

\begin{Def}\label{Def:extgroup}
	Let $\mcA$ be a locally noetherian Grothendieck category. For an atom $\alpha$ in $\mcA$, denote the right derived functor of the left exact functor $\Hom_{\mcA}(\alpha,-):\mcA\to\Mod k(\alpha)$ by $\RHom_{\mcA}(\alpha,-):\D(\mcA)\to\D(\Mod k(\alpha))$, where $\D(\mcA)$ is the unbounded derived category of $\mcA$. For a nonnegative integer $i$, the $i$-th right derived functor of $\Hom_{\mcA}(\alpha,-)$ is denoted by $\Ext^{i}_{\mcA}(\alpha,-):\mcA\to\Mod k(\alpha)$.
\end{Def}

The existence of the right derived functor follows from the fact that every complex in a Grothendieck category has a K-injective resolution (\cite{Spaltenstein} and \cite{AlonsoTarrioJeremiasLopezSoutoSalorio}).

In order to give other descriptions of the functors $\RHom_{\mcA}(\alpha,-)$ and $\Ext^{i}_{\mcA}(\alpha,-)$, we will see that the functor $\Hom_{\mcA}(\alpha,-)$ is the composite of additive functors $G_{\alpha}$ and $L_{\alpha}$ defined below.

\begin{Def}\leavevmode
	\begin{enumerate}
		\item Denote by $\mcC_{\alpha}$ the full subcategory of $\mcA$ consisting of objects which are isomorphic to nonzero subobjects of $E(\alpha)$.
		\item Denote by $\Mod\mcC_{\alpha}$ the category of contravariant $\mbZ$-functors from $\mcC_{\alpha}$ to $\Mod\mbZ$. (Note that the category $\mcC_{\alpha}$ is skeletally small since $\mcA$ is a Grothendieck category.)
	\end{enumerate}
\end{Def}

\begin{rem}\leavevmode
	\begin{enumerate}
		\item Regard the directed set $\mcF_{E(\alpha)}$ as a category (see section \ref{sec:morspaces}). Then we have a canonical contravariant functor $\mcF_{E(\alpha)}\to\mcC_{\alpha}$, which sends the unique morphism $U\to U'$ in $\mcF_{E(\alpha)}$ for each pair of subobjects $U\supset U'$ to the inclusion morphism $U'\hookrightarrow U$. This canonical functor is faithful and dense.
		\item $\Mod\mcC_{\alpha}$ is a Grothendieck category.
	\end{enumerate}
\end{rem}

\begin{Def}
	For a nonnegative integer $i$, define an additive functor $G^{i}_{\alpha}:\mcA\to\Mod\mcC_{\alpha}$ by $G^{i}_{\alpha}(M)=\Ext^{i}_{\mcA}(-,M)|_{\mcC_{\alpha}}$ for each object $M$ in $\mcA$ and $G^{i}_{\alpha}(f)=\Ext^{i}_{\mcA}(-,f)|_{\mcC_{\alpha}}$ for each morphism $f$ in $\mcA$, where $(-)|_{\mcC_{\alpha}}$ is the restriction to $\mcC_{\alpha}$. $G^{0}_{\alpha}$ is also denoted by $G_{\alpha}$.
\end{Def}

$G_{\alpha}$ is a left exact functor and its $i$-th right derived functor is $G^{i}_{\alpha}$.

\begin{Prop}\leavevmode
	\begin{enumerate}
		\item Let $X$ be an object in $\Mod\mcC_{\alpha}$. Then the composite of the canonical contravariant functor $\mcF_{E(\alpha)}\to\mcC_{\alpha}$ and the contravariant additive functor $X:\mcC_{\alpha}\to\Mod\mbZ$ defines a direct system in $\Mod\mbZ$. Denote its direct limit in $\Mod\mbZ$ by $L_{\alpha}(X)$.
		\item The correspondence $X\mapsto L_{\alpha}(X)$ defines an exact functor $L_{\alpha}:\Mod\mcC_{\alpha}\to\Mod k(\alpha)$.
	\end{enumerate}
\end{Prop}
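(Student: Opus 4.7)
Part (1) is a routine construction. Since $E(\alpha)$ is uniform (being an indecomposable injective), $\mcF_{E(\alpha)}$ coincides with the poset of all nonzero subobjects of $E(\alpha)$ directed by reverse inclusion. The canonical functor $\mcF_{E(\alpha)}\to\mcC_{\alpha}$ sends each ordering $U\supset U'$ to the inclusion $U'\hookrightarrow U$ and is therefore contravariant; composing with the contravariant $X\colon\mcC_{\alpha}\to\Mod\mbZ$ yields a covariant functor on the filtered poset $\mcF_{E(\alpha)}$, i.e., a direct system in $\Mod\mbZ$, whose colimit $L_{\alpha}(X)$ exists because $\Mod\mbZ$ is cocomplete.

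For part (2) the first task is to endow $L_{\alpha}(X)$ with a $k(\alpha)$-module structure. By Remark \ref{rem:morspace} (\ref{item:skewfield}), every $a\in k(\alpha)$ is the class $[\psi]$ of some morphism $\psi\colon U\to U'$ in $\mcA$ with $U,U'\in\mcF_{E(\alpha)}$; after possibly shrinking $U$ I may assume $\psi$ lands in any prescribed essential subobject $V$ of $E(\alpha)$. Applying $X$ gives $X(\psi)\colon X(V)\to X(U)$, and composing with the colimit insertion $X(U)\to L_{\alpha}(X)$ yields, upon passing to the colimit in $V$, a $\mbZ$-linear endomorphism $a\cdot(-)$ of $L_{\alpha}(X)$. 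Its independence of the representative $\psi$ and its compatibility with the composition in $k(\alpha)$ follow from the description of $k(\alpha)$ as a direct limit of $\Hom$-groups together with the functoriality of $X$ on $\mcC_{\alpha}$; this bookkeeping is the only step requiring any genuine care. Functoriality is immediate: a morphism $\varphi\colon X\to Y$ in $\Mod\mcC_{\alpha}$ restricts at each $U\in\mcF_{E(\alpha)}$ to a map $\varphi_{U}\colon X(U)\to Y(U)$ compatible with the transition morphisms of the direct system, hence induces $L_{\alpha}(\varphi)\colon L_{\alpha}(X)\to L_{\alpha}(Y)$, which visibly commutes with the $k(\alpha)$-action just constructed.

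Exactness is then a formality. A short exact sequence $0\to X\to Y\to Z\to 0$ in $\Mod\mcC_{\alpha}$ is by definition objectwise short exact, so at each $U\in\mcF_{E(\alpha)}$ it evaluates to a short exact sequence in $\Mod\mbZ$. Since $\mcF_{E(\alpha)}$ is directed and $\Mod\mbZ$ satisfies AB5, the colimit preserves exactness, giving a short exact sequence of abelian groups $0\to L_{\alpha}(X)\to L_{\alpha}(Y)\to L_{\alpha}(Z)\to 0$. Because $k(\alpha)$ is a skew field, exactness of a sequence of $k(\alpha)$-linear maps can be tested after forgetting to $\Mod\mbZ$, so the sequence is exact in $\Mod k(\alpha)$ as well. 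The principal obstacle in the whole proof is verifying that the $k(\alpha)$-action introduced in the previous paragraph is well-defined; everything else is a formal consequence of the cocompleteness and AB5 property of $\Mod\mbZ$ together with the fact that $k(\alpha)$ is a skew field.
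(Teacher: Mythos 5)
Your plan matches the paper's approach almost step for step: part (1) is the routine covariance check; for part (2) the real content is the well-definedness of the scalar action of $k(\alpha)$, which you correctly isolate as the only delicate step and which the paper carries out by choosing a common refinement $W\subset U\cap V$ for the representatives in $X$, a common refinement $W'\subset U'\cap V'$ for the representatives in $k(\alpha)$, shrinking $W'$ to $W'\cap r^{-1}(W)$ so that the lifted morphism $h\colon W'\to W$ exists, and then chasing the identification through $X$. Your exactness argument (directedness of $\mcF_{E(\alpha)}$ plus AB5, together with the observation that the forgetful functor to $\Mod\mbZ$ reflects exactness) is the same as the paper's one-line remark that exactness of $L_{\alpha}$ follows from exactness of filtered colimits; note only that the appeal to $k(\alpha)$ being a skew field is unnecessary there, since the forgetful functor $\Mod R\to\Mod\mbZ$ is exact and faithful for any ring $R$, and that the paper writes the action on the right (so $L_{\alpha}(X)$ lands in $\Mod k(\alpha)$ as right modules) rather than on the left.
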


\begin{proof}
	(1) This is obvious.
	
	(2) In order to show that $L_{\alpha}(X)$ has a canonical $k(\alpha)$-module structure, let $[x]\in L_{\alpha}(X)$ and $[f]\in k(\alpha)$, where $x\in X(U)$, $f:U'\to U$, and $U,U'\in\mcF_{E(\alpha)}$ (see Remark \ref{rem:morspace} (\ref{item:skewfield})). Define $[x][f]\in L_{\alpha}(X)$ by $[x][f]=[X(f)(x)]$.
	
	In order to show the well-definedness of this action, let $y\in X(V)$ and $g:V'\to V$ such that $[x]=[y]$, and $[f]=[g]$. In the case where $N$ is a subobject of an object $M$ in $\mcA$, denote the inclusion morphism $N\hookrightarrow M$ by $\iota_{N,M}$. By the definition of $L_{\alpha}(X)$, there exists $W\in\mcF_{E(\alpha)}$ such that $W\subset U\cap V$, and $X(\iota_{W,U})(x)=X(\iota_{W,V})(y)$. Since $[f]=[g]$, there exists $W'\in\mcF_{E(\alpha)}$ such that $W'\subset U'\cap V'$, and $\iota_{U,E(\alpha)}f\iota_{W',U'}=\iota_{V,E(\alpha)}g\iota_{W',V'}$. Denote this morphism by $r:W'\to E(\alpha)$. By replacing $W'$ by $W'\cap r^{-1}(W)$, we can assume $\Im r\subset W$. Then there exists a morphism $h:W'\to W$ such that $\iota_{W,U}h=f\iota_{W',U'}$, and $\iota_{W,V}h=g\iota_{W',V'}$. Hence we have
	\begin{align*}
		[X(f)(x)]
		&=[X(\iota_{W',U'})(X(f)(x))]=[X(f\iota_{W',U'})(x)]\\
		&=[X(\iota_{W,U}h)(x)]=[X(f)(X(\iota_{W,U})(x))]\\
		&=[X(f)(X(\iota_{W,V})(y))]=[X(g)(y)].
	\end{align*}
	This shows the well-definedness of $[x][f]$.
	
	It is straightforward to show that this action makes $L_{\alpha}(X)$ a $k(\alpha)$-module, and $L_{\alpha}$ becomes an additive functor. The exactness of $L_{\alpha}$ follows from that of direct limit.
\end{proof}

In section \ref{sec:morspaces}, we defined the left exact functor $\Hom_{\mcA}(\alpha,-)$ as the composite of the left exact functor $P:\mcA\to\mcS$ and the exact functor $\Hom_{\mcS}(P(E(\alpha)),-):\mcS\to\Mod k(\alpha)$. We also have the following description of $\Hom_{\mcA}(\alpha,-)$.

\begin{Prop}
	The left exact functor $\Hom_{\mcA}(\alpha,-)$ is the composite of the left exact functor $G_{\alpha}:\mcA\to\Mod\mcC_{\alpha}$ and the exact functor $L_{\alpha}:\Mod\mcC_{\alpha}\to\Mod k(\alpha)$.
\end{Prop}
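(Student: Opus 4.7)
The approach is to identify both functors with the same concrete direct limit. Given an object $M$ of $\mcA$, the contravariant functor $G_{\alpha}(M) = \Hom_{\mcA}(-,M)|_{\mcC_{\alpha}}$ precomposed with the canonical contravariant functor $\mcF_{E(\alpha)} \to \mcC_{\alpha}$ sends $U \in \mcF_{E(\alpha)}$ to $\Hom_{\mcA}(U,M)$, with transition map $\Hom_{\mcA}(U,M) \to \Hom_{\mcA}(U',M)$ for $U \supseteq U'$ in $\mcF_{E(\alpha)}$ given by precomposition with the inclusion $U' \hookrightarrow U$. Hence by the definition of $L_{\alpha}$,
\[
L_{\alpha}(G_{\alpha}(M)) = \varinjlim_{U \in \mcF_{E(\alpha)}} \Hom_{\mcA}(U,M).
\]
By Definition \ref{Def:morspace} together with Definition \ref{Def:spectralcat} (and Remark \ref{rem:morspace} (\ref{item:cofinal}) with $U' = E(\alpha)$), the same direct limit equals $\Hom_{\mcS}(P(E(\alpha)),P(M)) = \Hom_{\mcA}(\alpha,M)$, with identical transition maps. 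This identifies the two underlying abelian groups, and naturality in $M$ is immediate because a morphism $M \to N$ in $\mcA$ acts on both sides by postcomposition, which commutes with the transition maps.

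It remains to check that the two $k(\alpha)$-module structures coincide. For $[\varphi] \in L_{\alpha}(G_{\alpha}(M))$ with $\varphi : U \to M$ and $[f] \in k(\alpha)$ represented by some $f : U' \to U$ with $U' \in \mcF_{E(\alpha)}$ (a choice possible after restriction, cf.\ Remark \ref{rem:morspace} (\ref{item:skewfield})), the $L_{\alpha}$-action gives $[\varphi][f] = [G_{\alpha}(M)(f)(\varphi)] = [\varphi \circ f]$, since $G_{\alpha}(M)(f)$ is precomposition by $f$. On the other side, $[f] \in \End_{\mcS}(P(E(\alpha)))$ acts on $[\varphi] \in \Hom_{\mcS}(P(E(\alpha)),P(M))$ by composition in $\mcS$, which by Definition \ref{Def:spectralcat} (3) is again represented by $\varphi \circ f$. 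The two actions therefore agree.

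The only real bookkeeping is arranging that $[f]$ admits a representative whose codomain is exactly the source of $\varphi$; this is resolved by the same cofinality and well-definedness manipulations already used when defining the $k(\alpha)$-action on $L_{\alpha}$, so no substantive obstacle arises beyond tracking contravariance and the directions of the transition maps.
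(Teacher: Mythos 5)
Your proof is correct and fills in the details that the paper declines to record, asserting only that the claim ``can be shown straightforwardly.'' Your identification of both sides with $\varinjlim_{U\in\mcF_{E(\alpha)}}\Hom_{\mcA}(U,M)$ with the same (restriction) transition maps, together with the check that the two $k(\alpha)$-actions are both given by the composition formula $[\varphi][f]=[\varphi\circ f]$, is exactly the bookkeeping the author is waving at.
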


\begin{proof}
	This can be shown straightforwardly.
\end{proof}

This observation allows us to give other descriptions of the functors $\RHom_{\mcA}(\alpha,-):\D(\mcA)\to\D(\Mod k(\alpha))$ and $\Ext^{i}_{\mcA}(\alpha,-):\mcA\to\Mod k(\alpha)$.

\begin{Thm}
	Let $\mcA$ be a locally noetherian Grothendieck category, $\mcS$ the spectral category of $\mcA$, $P:\mcA\to\mcS$ the canonical additive functor, and $\alpha$ an atom in $\mcA$.
	\begin{enumerate}
		\item There exist functorial isomorphisms
		\begin{equation*}
			\RHom_{\mcA}(\alpha,-)\cong\Hom_{\mcS}(P(E(\alpha)),-)\circ\R P\cong L_{\alpha}\circ\R G_{\alpha}
		\end{equation*}
		of triangle functors $\D(\mcA)\to\D(\Mod k(\alpha))$.
		\item For any nonnegative integer $i$, we have a functorial isomorphism
		\begin{equation*}
			\Ext^{i}_{\mcA}(\alpha,-)\cong\varinjlim_{U\in\mcF_{E(\alpha)}}\Ext^{i}_{\mcA}(U,-)
		\end{equation*}
		of additive functors $\mcA\to\Mod k(\alpha)$.
	\end{enumerate}
\end{Thm}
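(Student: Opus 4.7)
The plan is to exploit the two factorizations of the left exact functor $\Hom_{\mcA}(\alpha,-)$ recorded immediately above: namely $\Hom_{\mcA}(\alpha,-)=\Hom_{\mcS}(P(E(\alpha)),-)\circ P$ (from Definition \ref{Def:morspace}) and $\Hom_{\mcA}(\alpha,-)=L_{\alpha}\circ G_{\alpha}$ (from the preceding proposition). In each factorization the inner functor ($P$ or $G_{\alpha}$) is only left exact, but the outer functor is exact. Both isomorphisms in part (1) will therefore be instances of a single principle: if $F$ is an exact functor between abelian categories and $G$ is left exact with a right derived functor, then $\R(F\circ G)\cong F\circ\R G$ as triangle functors.

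First I would verify exactness of the two outer functors. For $\Hom_{\mcS}(P(E(\alpha)),-)$ this follows from Theorem \ref{Thm:spectralcat1}(\ref{item:spectralsplit}): since $\mcS$ is spectral, every short exact sequence in $\mcS$ splits, so $\Hom_{\mcS}(X,-)$ is exact for every $X\in\mcS$. For $L_{\alpha}$ exactness was already established in the preceding proposition, inherited from exactness of filtered colimits in $\Mod\mbZ$. An exact functor preserves all quasi-isomorphisms, so it extends termwise to a triangle functor on the unbounded derived category. Given $M\in\mcA$ with a K-injective resolution $M\to I$ (existence by the results cited in the paper), one has $\R P(M)\cong P(I)$ in $\D(\mcS)$, hence
\begin{eqnarray*}
\Hom_{\mcS}(P(E(\alpha)),-)(\R P(M)) &\cong& \Hom_{\mcS}(P(E(\alpha)),P(I))\\
&=& \Hom_{\mcA}(\alpha,I)\cong\RHom_{\mcA}(\alpha,M),
\end{eqnarray*}
and the identical argument with $G_{\alpha}$ in place of $P$ and $L_{\alpha}$ in place of $\Hom_{\mcS}(P(E(\alpha)),-)$ gives $L_{\alpha}(\R G_{\alpha}(M))\cong\RHom_{\mcA}(\alpha,M)$. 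Naturality in $M$ and compatibility with distinguished triangles are automatic, since postcomposition with an exact functor sends distinguished triangles to distinguished triangles.

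For part (2), I would apply $H^{i}$ to the isomorphism $\RHom_{\mcA}(\alpha,M)\cong L_{\alpha}(\R G_{\alpha}(M))$. Because $L_{\alpha}$ is exact, it commutes with cohomology of complexes, so
\begin{eqnarray*}
\Ext^{i}_{\mcA}(\alpha,M)\cong L_{\alpha}(H^{i}(\R G_{\alpha}(M)))=L_{\alpha}(G^{i}_{\alpha}(M))=L_{\alpha}(\Ext^{i}_{\mcA}(-,M)|_{\mcC_{\alpha}}).
\end{eqnarray*}
Unwinding the definition of $L_{\alpha}$ as the filtered colimit over $\mcF_{E(\alpha)}$ via the canonical faithful dense functor $\mcF_{E(\alpha)}\to\mcC_{\alpha}$ yields precisely $\varinjlim_{U\in\mcF_{E(\alpha)}}\Ext^{i}_{\mcA}(U,M)$, and the induced $k(\alpha)$-action agrees with the one of Remark \ref{rem:morspace}(\ref{item:skewfield}).

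The only real technical point is justifying $\R(F\circ G)\cong F\circ\R G$ when $F$ is exact; this is standard because an exact $F$ preserves quasi-isomorphisms, so the total derived functor of the composite is computed by applying $F$ termwise to a K-injective resolution of the input. Once this is in hand, everything else is bookkeeping with filtered colimits and the canonical identifications already recorded in the preceding section.
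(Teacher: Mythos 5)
Your proposal is correct and follows essentially the same route as the paper: the paper's proof of part (1) is compressed to the single sentence ``This follows from above observation,'' referring to precisely the two factorizations of $\Hom_{\mcA}(\alpha,-)$ through an exact outer functor that you invoke, together with the standard fact that $\R(F\circ G)\cong F\circ\R G$ when $F$ is exact; part (2) in the paper is the same chain of isomorphisms you write, obtained by passing $\H^{i}$ through the exact $L_{\alpha}$ and unwinding the filtered colimit. The only difference is that you make explicit the derived-functor-composition principle that the paper leaves implicit, which is a reasonable amount of added detail but not a different argument.
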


\begin{proof}
	(1) This follows from above observation.
	
	(2) For an object $M$ in $\mcA$, we have
	\begin{align*}
		\Ext^{i}_{\mcA}(\alpha,M)
		&=\H^{i}(\RHom_{\mcA}(\alpha,M))
		\cong\H^{i}(L_{\alpha}(\R G_{\alpha}(M)))\\
		&\cong L_{\alpha}(\H^{i}(\R G_{\alpha}(M)))
		=L_{\alpha}(\Ext^{i}_{\mcA}(-,M)|_{\mcC_{\alpha}})\\
		&=\varinjlim_{U\in\mcF_{E(\alpha)}}\Ext^{i}_{\mcA}(U,M).
	\end{align*}
	It is easy to see that these isomorphisms are functorial on $M$.
\end{proof}

\begin{rem}
	Similarly to Remark \ref{rem:morspace} (\ref{item:cofinal}), for any nonzero subobject $U'$ of $E(\alpha)$, we have a functorial isomorphism
	\begin{equation*}
		\Ext^{i}_{\mcA}(\alpha,-)\cong\varinjlim_{U\in\mcF_{U'}}\Ext^{i}_{\mcA}(U,-)
	\end{equation*}
	of additive functors $\mcA\to\Mod k(\alpha)$.
\end{rem}

\section{Bass numbers}
\label{sec:Bassnumbers}

Throughout this section, let $\mcA$ be a locally noetherian Grothendieck category. For an object $M$ in $\mcA$, denote by $E^{i}(M)$ the $i$-th term in the minimal injective resolution of $M$, that is,
\begin{equation*}
	0\to M\to E^{0}(M)\to E^{1}(M)\to E^{2}(M)\to\cdots
\end{equation*}
is the minimal injective resolution of $M$. Note that $E^{0}(M)=E(M)$.

By considering Theorem \ref{Thm:atominj} (\ref{item:atomindecinj}), we can define Bass numbers from the viewpoint of atoms as a generalization of Bass numbers defined by Bass \cite{Bass63}.

\begin{Def}\label{Def:Bassnumber}
	Let $\mcA$ be a locally noetherian Grothendieck category, $\alpha$ an atom in $\mcA$, $M$ an object in $\mcA$, and $i$ a nonnegative integer. Define the $i$\emph{-th Bass number} $\mu_{i}(\alpha,M)$ of $M$ with respect to $\alpha$ as the cardinal number satisfying
	\begin{equation*}
		E^{i}(M)=\bigoplus_{\alpha\in\ASpec\mcA}E(\alpha)^{\oplus\mu_{i}(\alpha,M)}.
	\end{equation*}
\end{Def}

\begin{rem}\leavevmode
	\begin{enumerate}
		\item In Definition \ref{Def:Bassnumber}, the existence of indecomposable decompositions of injective objects is shown by Matlis \cite[Theorem 2.5]{Matlis}. Since the endomorphism ring of any indecomposable injective object is a local ring, Krull-Remak-Schmidt-Azumaya's theorem \cite[Theorem 1]{Azumaya} ensures the uniqueness of indecomposable decompositions of injective objects. The uniqueness also can be shown by using Theorem \ref{Thm:Bassext}.
		\item In the case where $i=0$ in Definition \ref{Def:Bassnumber}, $\mu_{0}(\alpha,M)$ coincides with $r_{I_{\alpha}}(M)$ with the notation in \cite{GabrielOberst}.
	\end{enumerate}
\end{rem}

We show that Bass numbers defined above coincide with the dimensions of the extension groups between an atom and an object.

\begin{Thm}\label{Thm:Bassext}
	Let $\mcA$ be a locally noetherian Grothendieck category, $\alpha$ an atom in $\mcA$, $M$ an object in $\mcA$, and $i$ a nonnegative integer. Then we have the equation
	\begin{equation*}
		\mu_{i}(\alpha,M)=\dim_{k(\alpha)}\Ext^{i}_{\mcA}(\alpha,M).
	\end{equation*}
\end{Thm}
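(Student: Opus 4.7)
My plan is to compute $\Ext^{i}_{\mcA}(\alpha,M)$ directly from the minimal injective resolution $E^{\bullet}(M)$. Since $E^{\bullet}(M)$ is a bounded-below injective resolution of $M$ and $\Hom_{\mcA}(\alpha,-)$ is a left exact functor, its right derived functors satisfy $\Ext^{i}_{\mcA}(\alpha,M)\cong\H^{i}(\Hom_{\mcA}(\alpha,E^{\bullet}(M)))$. The strategy is to show that (a) each term of this complex is isomorphic to $k(\alpha)^{\oplus\mu_{j}(\alpha,M)}$ as a $k(\alpha)$-module, and (b) every differential of the complex $\Hom_{\mcA}(\alpha,E^{\bullet}(M))$ vanishes. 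Once both are established, the cohomology at degree $i$ is literally the $i$-th term, and the dimension count finishes the proof.

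For (a), since $\Hom_{\mcA}(\alpha,-)$ commutes with arbitrary direct sums by Remark \ref{rem:morspace}(2), the Bass decomposition of $E^{j}(M)$ yields
\begin{eqnarray*}
\Hom_{\mcA}(\alpha,E^{j}(M))\cong\bigoplus_{\beta\in\ASpec\mcA}\Hom_{\mcS}(P(E(\alpha)),P(E(\beta)))^{\oplus\mu_{j}(\beta,M)}.
\end{eqnarray*}
By Theorem \ref{Thm:spectralcat2}(\ref{item:indecinjsimple}), $P(E(\alpha))$ and $P(E(\beta))$ are simple objects of $\mcS$, and they are isomorphic if and only if $\alpha=\beta$. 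Schur's lemma then identifies the $\beta=\alpha$ summand with $k(\alpha)$ and kills all others, giving $\Hom_{\mcA}(\alpha,E^{j}(M))\cong k(\alpha)^{\oplus\mu_{j}(\alpha,M)}$.

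The key step, and the one I expect to be the main obstacle, is (b): showing that the differentials vanish by exploiting the minimality of $E^{\bullet}(M)$. For each $j\geq 0$, the subobject $\ker d^{j}\subset E^{j}(M)$ equals $M$ when $j=0$ and $\Im d^{j-1}$ when $j\geq 1$; by construction of the minimal injective resolution, $E^{j}(M)$ is the injective hull of $\ker d^{j}$, so the inclusion $\ker d^{j}\hookrightarrow E^{j}(M)$ is essential. Theorem \ref{Thm:spectralcat2}(\ref{item:essiso}) then implies that $P$ carries this inclusion to an isomorphism in $\mcS$, and applying $\Hom_{\mcS}(P(E(\alpha)),-)$ makes $\Hom_{\mcA}(\alpha,\ker d^{j})\to\Hom_{\mcA}(\alpha,E^{j}(M))$ an isomorphism of $k(\alpha)$-modules. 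Since $d^{j}$ annihilates $\ker d^{j}$, the composite of this isomorphism with $\Hom_{\mcA}(\alpha,d^{j})$ is zero, forcing $\Hom_{\mcA}(\alpha,d^{j})=0$. Combining (a) and (b) yields $\Ext^{i}_{\mcA}(\alpha,M)\cong k(\alpha)^{\oplus\mu_{i}(\alpha,M)}$, whose $k(\alpha)$-dimension is $\mu_{i}(\alpha,M)$.
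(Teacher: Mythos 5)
Your proof is correct, and it rests on exactly the same two facts the paper uses: (i) that $\Hom_{\mcA}(\alpha,-)$ turns essential inclusions into isomorphisms (Theorem~\ref{Thm:spectralcat2}~(\ref{item:essiso})), applied to $\ker d^{j}\hookrightarrow E^{j}(M)$ in the minimal injective resolution; and (ii) the Schur-type computation of $\Hom_{\mcA}(\alpha,E(\beta))$ in the spectral category using the direct-sum-preservation and the bijection between atoms and indecomposable injectives. The only difference from the paper's proof is organizational: the paper passes to cosyzygies $\mho^{j}(M)$ and does iterated dimension-shifting via the long exact sequence (using vanishing of $\Ext^{l}_{\mcA}(\alpha,E^{j}(M))$ for $l\geq 1$), whereas you argue more directly that every differential of the complex $\Hom_{\mcA}(\alpha,E^{\bullet}(M))$ is zero, since it factors through the isomorphism $\Hom_{\mcA}(\alpha,\ker d^{j})\xrightarrow{\sim}\Hom_{\mcA}(\alpha,E^{j}(M))$ precomposed with $d^{j}\circ\iota=0$. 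Your version is marginally shorter since it avoids invoking the long exact sequence of $\Ext$ and the acyclicity of injectives explicitly, but the mathematical content is the same; both proofs stand or fall on the essential-subobject isomorphism and the Schur argument.
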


\begin{proof}
	For a nonnegative integer $j$, denote by $\mho^{j}(M)$ the $j$-th cosyzygy of an object $M$ in $\mcA$, that is, $\mho^{0}(M)=M$, and $\mho^{j+1}(M)$ is the cokernel of the inclusion morphism $\mho^{j}(M)\hookrightarrow E^{j}(M)$. Then we have the long exact sequence
	\begin{equation*}
		\xymatrix{
			0\ar[r] & \Hom_{\mcA}(\alpha,\mho^{j}(M))\ar[r] & \Hom_{\mcA}(\alpha,E^{j}(M))\ar[r] & \Hom_{\mcA}(\alpha,\mho^{j+1}(M))\ar`r[d]`[l]`[dlll]`[dll] [dll]\\
			& \Ext^{1}_{\mcA}(\alpha,\mho^{j}(M))\ar[r] & \Ext^{1}_{\mcA}(\alpha,E^{j}(M))\ar[r] & \Ext^{1}_{\mcA}(\alpha,\mho^{j+1}(M))\ar`r[d]`[l]`[dlll]`[dll] [dll] \\
			& \Ext^{2}_{\mcA}(\alpha,\mho^{j}(M))\ar[r] & \Ext^{2}_{\mcA}(\alpha,E^{j}(M))\ar[r] & \Ext^{2}_{\mcA}(\alpha,\mho^{j+1}(M))\ar`r[d]`[l]`[dlll]`/8pt[dll] [dll] \\
			& \cdots & &
		}
	\end{equation*}
	in $\Mod k(\alpha)$. Since $\mho^{j}(M)$ is an essential subobject of $E^{j}(M)$, by Theorem \ref{Thm:spectralcat2} (\ref{item:essiso}), the morphism $\Hom_{\mcA}(\alpha,\mho^{j}(M))\to\Hom_{\mcA}(\alpha,E^{j}(M))$ is an isomorphism. For a positive integer $l$, we have $\Ext^{l}_{\mcA}(\alpha,E^{j}(M))=0$. Hence we obtain $\Ext^{l}_{\mcA}(\alpha,\mho^{j+1}(M))\cong\Ext^{l+1}_{\mcA}(\alpha,\mho^{j}(M))$ for any nonnegative integer $l$. By using this isomorphism repeatedly, we obtain
	\begin{align*}
		\Ext^{i}_{\mcA}(\alpha,M)
		&\cong\Hom_{\mcA}(\alpha,\mho^{i}(M))\\
		&\cong\Hom_{\mcA}(\alpha,E^{i}(M))\\
		&=\Hom_{\mcA}(\alpha,\bigoplus_{\beta\in\ASpec\mcA}E(\beta)^{\oplus\mu_{i}(\beta,M)})\\
		&\cong\bigoplus_{\beta\in\ASpec\mcA}\Hom_{\mcA}(\alpha,E(\beta))^{\oplus\mu_{i}(\beta,M)}\\
		&=k(\alpha)^{\oplus\mu_{i}(\alpha,M)}.
	\end{align*}
	Therefore the statement holds.
\end{proof}

\section{Application to E-stable subcategories}
\label{sec:application}

Throughout this section, let $\mcA$ be a locally noetherian Grothendieck category. As an application of Theorem \ref{Thm:Bassext}, we show some properties about E-stable subcategories. We recall their definition.

\begin{Def}
	Let $\mcA$ be a locally noetherian Grothendieck category and $\mcX$ a full subcategory of $\mcA$.
	\begin{enumerate}
		\item $\mcX$ is called $E$\emph{-stable} if for any object $M$ in $\mcA$, $M$ belongs to $\mcX$ if and only if $E^{i}(M)$ belongs to $\mcX$ for each nonnegative integer $i$.
		\item We say that $\mcX$ is \emph{closed under arbitrary direct sums} if for any family $\{M_{\lambda}\}_{\lambda\in\Lambda}$ of objects in $\mcX$, $\bigoplus_{\lambda\in\Lambda}M_{\lambda}$ also belongs to $\mcX$.
		\item We say that $\mcX$ is \emph{closed under direct summands} if for any object $M$ in $\mcX$, any direct summand of $M$ also belongs to $\mcX$.
	\end{enumerate}
\end{Def}

We define the small atom support of an object. This generalizes the notion of the small support in the setting of commutative noetherian rings (see \cite[Remark 2.9]{Foxby}).

\begin{Def}
	Let $\mcA$ be a locally noetherian Grothendieck category.
	\begin{enumerate}
		\item For an object $M$ in $\mcA$, define a subset $\asupp M$ of $\ASpec\mcA$ by
		\begin{equation*}
			\asupp M=\{\alpha\in\ASpec\mcA\mid\mu_{i}(\alpha,M)\neq 0\text{ for some }i\in\mbZ_{\geq 0}\},
		\end{equation*}
		and call it the \emph{small atom support} of $M$.
		\item For a full subcategory $\mcX$ of $\mcA$, define a subset $\asupp\mcX$ of $\ASpec\mcA$ by
		\begin{equation*}
			\asupp\mcX=\bigcup_{M\in\mcX}\asupp M.
		\end{equation*}
		\item For a subset $\Phi$ of $\ASpec\mcA$, define a full subcategory $\asupp^{-1}\Phi$ of $\mcA$ by
		\begin{equation*}
			\asupp^{-1}\Phi=\{M\in\mcA\mid\asupp M\subset\Phi\}.
		\end{equation*}
	\end{enumerate}
\end{Def}

The small atom support is related to associated atoms in Definition \ref{Def:assatom} as follows.

\begin{Prop}\label{Prop:asupp}
	Let $M$ be an object in $\mcA$. Then the following hold.
	\begin{enumerate}
		\item\label{item:asupprhom} $\asupp M=\{\alpha\in\ASpec\mcA\mid\RHom_{\mcA}(\alpha,M)\neq 0\}$.
		\item $\AAss M\subset \asupp M$.
	\end{enumerate}
\end{Prop}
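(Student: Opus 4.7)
The plan is to deduce both parts directly from Theorem \ref{Thm:Bassext} together with Proposition \ref{Prop:assmor}.

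For part (1), the key observation is that by Theorem \ref{Thm:Bassext} we have
\begin{eqnarray*}
	\mu_{i}(\alpha,M)=\dim_{k(\alpha)}\Ext^{i}_{\mcA}(\alpha,M),
\end{eqnarray*}
so $\mu_{i}(\alpha,M)\neq 0$ if and only if $\Ext^{i}_{\mcA}(\alpha,M)\neq 0$. I would then translate this into a statement about $\RHom_{\mcA}(\alpha,M)\in\D(\Mod k(\alpha))$: a complex in $\D(\Mod k(\alpha))$ is zero if and only if all of its cohomology modules vanish. Hence $\RHom_{\mcA}(\alpha,M)\neq 0$ if and only if $\Ext^{i}_{\mcA}(\alpha,M)=\H^{i}(\RHom_{\mcA}(\alpha,M))\neq 0$ for some $i\in\mbZ_{\geq 0}$ (the nonnegative range suffices since $\Hom_{\mcA}(\alpha,-)$ is left exact). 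Combining these two equivalences gives the desired description of $\asupp M$.

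For part (2), let $\alpha\in\AAss M$. By Proposition \ref{Prop:assmor} this means $\Hom_{\mcA}(\alpha,M)=\Ext^{0}_{\mcA}(\alpha,M)\neq 0$, so by Theorem \ref{Thm:Bassext} we have $\mu_{0}(\alpha,M)\neq 0$, which places $\alpha$ in $\asupp M$ by definition. No serious obstacle arises, as both statements are essentially bookkeeping once the identification of Bass numbers with $\Ext$-dimensions is in hand.
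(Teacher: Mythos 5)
Your proof is correct and follows essentially the same route as the paper: part (1) is the observation that Theorem \ref{Thm:Bassext} identifies nonvanishing of $\mu_i(\alpha,M)$ with nonvanishing of $\Ext^i_{\mcA}(\alpha,M)$, and a complex in $\D(\Mod k(\alpha))$ is nonzero iff some cohomology is nonzero; part (2) then reduces to Proposition \ref{Prop:assmor} exactly as the paper indicates. Your only cosmetic deviation is proving (2) directly from Theorem \ref{Thm:Bassext} in degree $0$ rather than routing through part (1), which is an equivalent unwinding of the same facts.
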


\begin{proof}
	(1) This follows from Theorem \ref{Thm:Bassext}.
	
	(2) This follows from (\ref{item:asupprhom}) and Proposition \ref{Prop:assmor}.
\end{proof}

\begin{Thm}\label{Thm:subcatatom}
	Let $\mcA$ be a locally noetherian Grothendieck category. Then the map $\mcX\mapsto\asupp\mcX$ is a bijection between the E-stable subcategories of $\mcA$ closed under arbitrary direct sums and direct summands, and the subsets of $\ASpec\mcA$. The inverse map is given by $\Phi\mapsto\asupp^{-1}\Phi$.
\end{Thm}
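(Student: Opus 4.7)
The plan is to collect three preparatory facts about $\asupp$ and then deduce the two composite identities $\asupp^{-1}(\asupp\mcX)=\mcX$ and $\asupp(\asupp^{-1}\Phi)=\Phi$ by formal manipulations. The facts I would establish first are: (i) $\asupp E(\alpha)=\{\alpha\}$ for every atom $\alpha$, which is immediate because the minimal injective resolution of $E(\alpha)$ is $E(\alpha)$ itself concentrated in degree zero; (ii) $\asupp M=\bigcup_{i\geq 0}\asupp E^{i}(M)$, which follows directly from Definition \ref{Def:Bassnumber} together with (i) applied termwise to the minimal injective resolution; and (iii) $\asupp$ converts arbitrary direct sums into unions and is stable under passage to direct summands.

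For (iii), I would combine Proposition \ref{Prop:asupp} (\ref{item:asupprhom}) with Theorem \ref{Thm:Bassext} to rewrite
\begin{eqnarray*}
\asupp M=\{\alpha\in\ASpec\mcA\mid\Ext^{i}_{\mcA}(\alpha,M)\neq 0\text{ for some }i\geq 0\},
\end{eqnarray*}
and then argue that $\Ext^{i}_{\mcA}(\alpha,-)$ commutes with arbitrary direct sums. In the description $\Ext^{i}_{\mcA}(\alpha,-)\cong\varinjlim_{U\in\mcF_{E(\alpha)}}\Ext^{i}_{\mcA}(U,-)$ from the end of Section \ref{sec:extgroups}, noetherian subobjects of $E(\alpha)$ (which are automatically essential by the uniformness of $E(\alpha)$) are cofinal because $\mcA$ is locally noetherian. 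For such noetherian $U$ the functor $\Ext^{i}_{\mcA}(U,-)$ preserves arbitrary direct sums, and a direct limit of direct-sum-preserving functors again preserves direct sums.

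With these in hand, the subcategory $\asupp^{-1}\Phi$ is $E$-stable by (ii) (since $\asupp M\subseteq\Phi$ if and only if $\asupp E^{i}(M)\subseteq\Phi$ for every $i$) and closed under arbitrary direct sums and direct summands by (iii), so the map $\Phi\mapsto\asupp^{-1}\Phi$ indeed lands in the target class. The identity $\asupp(\asupp^{-1}\Phi)=\Phi$ then follows formally: the inclusion $\subseteq$ is the definition of $\asupp^{-1}\Phi$, while for $\alpha\in\Phi$ one has $E(\alpha)\in\asupp^{-1}\Phi$ and $\alpha\in\asupp E(\alpha)$ by (i), giving $\supseteq$.

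For $\asupp^{-1}(\asupp\mcX)=\mcX$, the inclusion $\mcX\subseteq\asupp^{-1}(\asupp\mcX)$ is trivial. For the converse, let $M$ satisfy $\asupp M\subseteq\asupp\mcX$; by $E$-stability of $\mcX$ it suffices to show $E^{i}(M)\in\mcX$ for every $i$. Decomposing $E^{i}(M)=\bigoplus_{\alpha}E(\alpha)^{\oplus\mu_{i}(\alpha,M)}$ and invoking closure of $\mcX$ under direct sums, I only need $E(\alpha)\in\mcX$ whenever $\mu_{i}(\alpha,M)\neq 0$. For such $\alpha$ we have $\alpha\in\asupp M\subseteq\asupp\mcX$, so there is $N\in\mcX$ with $\mu_{j}(\alpha,N)\neq 0$ for some $j$; then $E(\alpha)$ is a direct summand of $E^{j}(N)\in\mcX$, and closure under direct summands concludes. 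The main obstacle I anticipate is (iii), namely the verification that $\Ext^{i}_{\mcA}(\alpha,-)$ preserves arbitrary direct sums; once that is granted, the rest is bookkeeping with the Matlis-type decomposition of $E^{i}(M)$ and Theorem \ref{Thm:Bassext}.
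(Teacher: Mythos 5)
Your proof is correct and follows the same overall plan as the paper: establish that $\asupp^{-1}\Phi$ lands in the target class of subcategories, then verify both composite identities. The two composition arguments ($\asupp\asupp^{-1}\Phi=\Phi$ and $\asupp^{-1}\asupp\mcX=\mcX$) are essentially identical to the paper's. The one place where you diverge is the closure of $\asupp^{-1}\Phi$ under arbitrary direct sums. The paper invokes Matlis' result \cite[Proposition 2.1]{Matlis} directly: the minimal injective resolution of $\bigoplus_\lambda M_\lambda$ is the termwise direct sum of the minimal injective resolutions, so $\mu_i(\alpha,\bigoplus_\lambda M_\lambda)=\sum_\lambda\mu_i(\alpha,M_\lambda)$ and hence $\asupp\bigl(\bigoplus_\lambda M_\lambda\bigr)=\bigcup_\lambda\asupp M_\lambda$. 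You instead pass through Theorem \ref{Thm:Bassext} and argue that $\Ext^i_\mcA(\alpha,-)$ preserves arbitrary direct sums, using the filtered-colimit description $\Ext^i_\mcA(\alpha,-)\cong\varinjlim_{U\in\mcF_{E(\alpha)}}\Ext^i_\mcA(U,-)$ and the cofinality of nonzero noetherian subobjects of $E(\alpha)$. This is valid, but note that the verification that $\Ext^i_\mcA(U,-)$ preserves direct sums for noetherian $U$ and positive $i$ itself rests on the same fact the paper uses (direct sums of injectives are injective in a locally noetherian category, hence direct sums of resolutions are resolutions), so your route is a repackaging rather than an independent argument. What your approach buys is a cleaner uniform statement --- $\asupp$ turns arbitrary direct sums into unions --- which you then use for both direct-sum and direct-summand closure at once; what the paper's route buys is that it avoids invoking Theorem \ref{Thm:Bassext} altogether, making Theorem \ref{Thm:subcatatom} logically independent of the Ext machinery (the paper reserves that machinery for Corollary \ref{Cor:subcatexact}). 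Either way, the proof is sound.
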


\begin{proof}
	Let $\Phi$ be a subset of $\ASpec\mcA$. Then $\asupp^{-1}\Phi$ is E-stable and closed under direct summands. It is also closed under arbitrary direct sums since for any family $\{M_{\lambda}\}_{\lambda\in\Lambda}$ of objects in $\mcA$, the minimal injective resolution of $\bigoplus_{\lambda\in\Lambda}M_{\lambda}$ is
	\begin{equation*}
		0\to\bigoplus_{\lambda\in\Lambda}M_{\lambda}\to\bigoplus_{\lambda\in\Lambda}E^{0}(M_{\lambda})\to\bigoplus_{\lambda\in\Lambda}E^{1}(M_{\lambda})\to\bigoplus_{\lambda\in\Lambda}E^{2}(M_{\lambda})\to\cdots
	\end{equation*}
	by \cite[Proposition 2.1]{Matlis}. The inclusion $\asupp\asupp^{-1}\Phi\subset\Phi$ is obvious. For any $\alpha\in\Phi$, since $E(\alpha)$ belongs to $\asupp^{-1}\Phi$, we have $\alpha\in\asupp E(\alpha)\subset\asupp\asupp^{-1}\Phi$. Hence $\asupp\asupp^{-1}\Phi=\Phi$.
	
	Let $\mcX$ be an E-stable subcategory of $\mcA$ which is closed under arbitrary direct sums and direct summands. Then an object $M$ in $\mcA$ belongs to $\mcX$ if and only if $E(\alpha)$ belongs to $\mcX$ for any $\alpha\in\asupp M$. For any atom $\alpha$ in $\mcA$, if $E(\alpha)$ belongs to $\mcX$, then $\alpha\in\asupp E(\alpha)\subset\asupp\mcX$. Conversely, if $\alpha\in\asupp\mcX$, there exist an object $N$ in $\mcX$ and a nonnegative integer $i$, $E(\alpha)$ is a direct summand of $E^{i}(N)$, and hence $E(\alpha)$ belongs to $\mcX$. Therefore $M$ belongs to $\mcX$ if and only if $\asupp M\subset\asupp\mcX$. This implies that $\mcX=\asupp^{-1}\asupp\mcX$.
\end{proof}

By using the long exact sequence of extension groups between atoms and objects, we can show the following result, which is a generalization of \cite[Corollary 2.19]{Takahashi}.

\begin{Cor}\label{Cor:subcatexact}
	Let $\mcA$ be a locally noetherian Grothendieck category and $\mcX$ an E-stable subcategories of $\mcA$ closed under arbitrary direct sums and direct summands. For any exact sequence
	\begin{equation*}
		0\to L\to M\to N\to 0
	\end{equation*}
	in $\mcA$, if two of $L$, $M$, $N$ belong to $\mcX$, then the remaining one also belongs to $\mcX$.
\end{Cor}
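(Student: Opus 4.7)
The plan is to reduce the statement to a small-atom-support computation using Theorem \ref{Thm:subcatatom}, and then exploit the long exact sequence of extension groups between an atom and an object.

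First, by Theorem \ref{Thm:subcatatom}, setting $\Phi=\asupp\mcX$ we have $\mcX=\asupp^{-1}\Phi$. Therefore, for any object $K$ of $\mcA$, $K\in\mcX$ if and only if $\asupp K\subset\Phi$. In particular, the hypothesis that two of $L,M,N$ lie in $\mcX$ translates to the inclusion of the small atom supports of those two objects in $\Phi$, and we must show the same inclusion for the third.

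Second, fix an atom $\alpha\in\ASpec\mcA$. Applying the left exact functor $\Hom_{\mcA}(\alpha,-):\mcA\to\Mod k(\alpha)$ from Definition \ref{Def:extgroup} to the short exact sequence $0\to L\to M\to N\to 0$ yields a long exact sequence
\begin{eqnarray*}
\cdots\to\Ext^{i}_{\mcA}(\alpha,L)\to\Ext^{i}_{\mcA}(\alpha,M)\to\Ext^{i}_{\mcA}(\alpha,N)\to\Ext^{i+1}_{\mcA}(\alpha,L)\to\cdots
\end{eqnarray*}
in $\Mod k(\alpha)$. By Theorem \ref{Thm:Bassext} (equivalently, by Proposition \ref{Prop:asupp} (\ref{item:asupprhom})), we have $\alpha\notin\asupp K$ if and only if $\Ext^{i}_{\mcA}(\alpha,K)=0$ for every $i\geq 0$.

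Third, I verify that the property ``$\Ext^{i}_{\mcA}(\alpha,-)=0$ for all $i\geq 0$'' enjoys the two-out-of-three property on short exact sequences. Given $\alpha\notin\Phi$, the hypothesis gives that two of the three sequences $\{\Ext^{i}_{\mcA}(\alpha,L)\}_{i}$, $\{\Ext^{i}_{\mcA}(\alpha,M)\}_{i}$, $\{\Ext^{i}_{\mcA}(\alpha,N)\}_{i}$ vanish identically. A direct chase of the long exact sequence shows the third vanishes too: the cases where $L,M$ or $L,N$ or $M,N$ vanish are all immediate, the last requiring one to note that $\Ext^{i}_{\mcA}(\alpha,L)$ is sandwiched between $\Ext^{i-1}_{\mcA}(\alpha,N)$ and $\Ext^{i}_{\mcA}(\alpha,M)$ for $i\geq 1$, while $\Hom_{\mcA}(\alpha,L)$ injects into $\Hom_{\mcA}(\alpha,M)$. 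Thus for every $\alpha\notin\Phi$, the third object has $\alpha\notin\asupp$, so the small atom support of the third object is contained in $\Phi$, and it belongs to $\mcX=\asupp^{-1}\Phi$.

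There is no real obstacle; the only thing to check is the third case (where $M$ and $N$ are given), since the first two are the usual horseshoe-style arguments. The entire proof is essentially a packaging of Theorem \ref{Thm:Bassext} plus Theorem \ref{Thm:subcatatom} with the standard long exact sequence.
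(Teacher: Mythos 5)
Your proof is correct and follows essentially the same route as the paper's: both reduce to Theorem~\ref{Thm:subcatatom} and the characterization of $\asupp$ via vanishing of extension groups from Proposition~\ref{Prop:asupp}~(\ref{item:asupprhom}), and both exploit the two-out-of-three property of that vanishing on a short exact sequence. The only cosmetic difference is that you chase the long exact sequence of $\Ext^{i}_{\mcA}(\alpha,-)$ directly, whereas the paper phrases the same fact in terms of the triangle obtained by applying $\RHom_{\mcA}(\alpha,-)$ in $\D(\Mod k(\alpha))$.
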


\begin{proof}
	This short exact sequence induces a triangle
	\begin{equation*}
		L\to M\to N\to L[1]
	\end{equation*}
	in $\D(\mcA)$. For any atom $\alpha$ in $\mcA$, we have the triangle
	\begin{equation*}
		\RHom_{\mcA}(\alpha,L)\to\RHom_{\mcA}(\alpha,M)\to\RHom_{\mcA}(\alpha,N)\to\RHom_{\mcA}(\alpha,L)[1]
	\end{equation*}
	in $\D(\Mod k(\alpha))$. Then by Proposition \ref{Prop:asupp} (\ref{item:asupprhom}), we have
	\begin{align*}
		\asupp L&\subset\asupp M\cup\asupp N,\\
		\asupp M&\subset\asupp L\cup\asupp N,\\
		\asupp N&\subset\asupp L\cup\asupp M.
	\end{align*}
	Then the claim follows. For example, if $M$ and $N$ belong to $\mcX$, then by Theorem \ref{Thm:subcatatom}, we have $\asupp M\subset\asupp\mcX$ and $\asupp N\subset\asupp\mcX$. Therefore we deduce that $\asupp L\subset\asupp\mcX$, and hence $L$ belongs to $\mcX$.
\end{proof}

\section{Bass numbers for noetherian algebras}
\label{sec:modfinalg}

In this section, we give another description of Bass numbers in the case of noetherian algebras. Throughout this section, let $R$ be a commutative noetherian ring and $\Lambda$ a ring whose center contains $R$ as a subring, and assume that $\Lambda$ is finitely generated as an $R$-module. A module means a right module, and an ideal means a two-sided ideal.

A proper ideal $P$ of $\Lambda$ is called a \emph{prime ideal} of $\Lambda$ if for any $a,b\in \Lambda$, $a\Lambda b\subset P$ implies $a\in P$ or $b\in P$. The set of all the prime ideals of $\Lambda$ is denoted by $\Spec\Lambda$, and the set of all the maximal ideals of $\Lambda$ is denoted by $\Max\Lambda$. Note that $\Max\Lambda\subset\Spec\Lambda$. For a $\Lambda$-module $M$, denote by $\Ass_{\Lambda}M$ the set of all the associated prime ideals of $M$, that is, a prime ideal $P$ of $\Lambda$ belongs to $\Ass_{\Lambda}M$ if and only if there exists a nonzero $\Lambda$-submodule $N$ of $M$ such that for any nonzero $\Lambda$-submodule $N'$ of $N$, $\Ann_{\Lambda}(N')=P$.

On indecomposable injective $\Lambda$-modules, the following fact is known.

\begin{Thm}\label{Thm:primeindecinj}\leavevmode
	\begin{enumerate}
		\item\label{item:Thmprimeindecinj} {\rm (\cite[V, 4, Lemma 2]{Gabriel})} There exists a bijection between $\Spec\Lambda$ and the set of isomorphism classes of indecomposable injective $\Lambda$-modules. For each prime ideal $P$ of $\Lambda$, denote by $I(P)$ the corresponding indecomposable injective $\Lambda$-module. Then the injective envelope $E_{\Lambda}(\Lambda/P)$ is the direct sum of finitely many copies of $I(P)$. We also have $\Ass_{\Lambda}I(P)=\Ass_{\Lambda}(\Lambda/P)=\{P\}$.
		\item\label{item:ThmAssindecinj} {\rm (\cite[V, 4, Proposition 6]{Gabriel})} Let $M$ be a $\Lambda$-module and $P$ a prime ideal of $\Lambda$. Then $P\in\Ass_{\Lambda}M$ if and only if $I(P)$ is a direct summand of $E_{\Lambda}(M)$.
	\end{enumerate}
\end{Thm}

Therefore we have a description of the atom spectrum of $\Mod\Lambda$.

\begin{Thm}\label{Thm:primeatom}\leavevmode
	\begin{enumerate}
		\item\label{item:Thmprimeatom} There exists a bijection between $\Spec\Lambda$ and $\ASpec(\Mod\Lambda)$. For each prime ideal $P$ of $\Lambda$, the corresponding atom $\widetilde{P}$ is determined by $\AAss(\Lambda/P)=\{\widetilde{P}\}$.
		\item\label{item:ThmAssAAss} The bijection in (\ref{item:Thmprimeatom}) induces a bijection between $\Ass_{\Lambda}M$ and $\AAss M$ for any $\Lambda$-module $M$.
	\end{enumerate}
\end{Thm}

\begin{proof}
	(\ref{item:Thmprimeatom}) This follows from Theorem \ref{Thm:primeindecinj} (\ref{item:Thmprimeindecinj}) and Theorem \ref{Thm:atominj} (\ref{item:atomindecinj}).
	
	(\ref{item:ThmAssAAss}) Let $P$ be a prime ideal of $\Lambda$. By Theorem \ref{Thm:primeindecinj} (\ref{item:ThmAssAAss}), we have $P\in\Ass_{\Lambda}M$ if and only if $\mu_{0}(\widetilde{P},M)\neq 0$. By Theorem \ref{Thm:Bassext} and Proposition \ref{Prop:assmor}, this is equivalent to $\widetilde{P}\in\AAss M$.
\end{proof}

The following lemma is useful to see behavior of $\Lambda/P$ for a prime ideal $P$ of $\Lambda$.

\begin{Lem}[{Goto and Nishida \cite[Lemma 2.5.1]{GotoNishida}}]\label{Lem:primeembedding}
	Let $P$ be a prime ideal of $\Lambda$ and $M$ a $\Lambda$-module. Then $P\in\Ass_{\Lambda}M$ if and only if there exist a positive integer $i$ and a $\Lambda$-monomorphism $\Lambda/P\hookrightarrow M^{\oplus i}$.
\end{Lem}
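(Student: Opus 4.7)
For the forward direction, suppose $P \in \Ass_{\Lambda}M$, so there exists a nonzero $\Lambda$-submodule $N$ of $M$ with $\Ann_{\Lambda}(N') = P$ for every nonzero submodule $N'$ of $N$. In particular $\Ann_{\Lambda}(N)=P$, so $N$ is a faithful right $(\Lambda/P)$-module. Since $\Lambda/P$ is right noetherian (as $\Lambda$ is finitely generated over the noetherian ring $R$), among the right ideals of the form $\mathrm{rAnn}_{\Lambda/P}(x_{1},\ldots,x_{n})$ with $x_{j}\in N$ there exists a minimal one; call it $K = \mathrm{rAnn}_{\Lambda/P}(x_{1},\ldots,x_{i})$. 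For any $y\in N$, adjoining $y$ cannot shrink $K$ further by minimality, hence $yK=0$ for every $y\in N$. Because $N$ is a submodule of $M$, the right annihilator $\mathrm{rAnn}_{\Lambda/P}(N)$ is in fact a two-sided ideal, and it equals $\Ann_{\Lambda/P}(N) = 0$ by faithfulness. Thus $K = 0$, and the $\Lambda$-linear map $\varphi \colon \Lambda/P \to N^{i}\subset M^{i}$ defined by $\overline{a}\mapsto(x_{1}a,\ldots,x_{i}a)$ is well-defined (since $P$ annihilates $N$) and has trivial kernel.

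For the converse, suppose we are given a $\Lambda$-monomorphism $\Lambda/P\hookrightarrow M^{i}$. By Remark \ref{rem:primeindecinjatom} (\ref{item:primeatom}), the bijection $\Spec\Lambda \leftrightarrow \ASpec(\Mod\Lambda)$ carries $\Ass_{\Lambda}(-)$ to $\AAss(-)$, and moreover $\AAss(\Lambda/P) = \{\widetilde{P}\}$. Iterating the inclusion $\AAss L \subset \AAss M$ from Proposition \ref{Prop:assexact} on the split short exact sequences $0\to M\to M\oplus M^{i-1}\to M^{i-1}\to 0$ yields $\AAss(M^{i}) = \AAss M$. Applying Proposition \ref{Prop:assexact} again to the given embedding gives $\widetilde{P}\in\AAss(\Lambda/P)\subset\AAss(M^{i})=\AAss M$, so $P\in\Ass_{\Lambda}M$ by the bijection.

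The main subtlety is in the forward direction: because $\Lambda$ is noncommutative, for a single element $x\in N$ the right annihilator $\mathrm{rAnn}_{\Lambda}(x)$ need not coincide with the two-sided ideal $P = \Ann_{\Lambda}(x\Lambda)$, so one cannot in general realize $\Lambda/P$ as a cyclic submodule of $M$, explaining why the power $M^{i}$ is necessary. The noetherian minimality argument above is precisely what is needed to replace this single element with a finite tuple whose joint right annihilator in $\Lambda/P$ collapses to zero, thanks to the fact that the joint right annihilator of a submodule is automatically two-sided.
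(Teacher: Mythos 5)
Your converse direction is fine: the reduction via $\AAss(\Lambda/P)=\{\widetilde P\}$, the split short exact sequences giving $\AAss(M^{i})=\AAss M$, and the bijection of Remark \ref{rem:primeindecinjatom}~(\ref{item:primeatom}) together do the job. The forward direction, however, has a genuine gap at the very first step. You invoke right noetherianness to claim that the family $\{\mathrm{rAnn}_{\Lambda/P}(x_{1},\ldots,x_{n})\mid x_{j}\in N\}$ has a \emph{minimal} element, but right noetherianness is the ascending chain condition on right ideals and yields \emph{maximal} elements of nonempty families of right ideals, not minimal ones. Nothing in the argument supplies the descending chain condition that a minimal element would require, and your proof uses the hypothesis that $\Lambda$ is module-finite over $R$ only to deduce noetherianness.

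This gap is not cosmetic: the statement is actually false for general prime noetherian rings, so any proof that uses only noetherianness must break somewhere. Take $\Lambda=A_{1}(k)$ the first Weyl algebra over a field $k$ of characteristic $0$ (a simple noetherian domain), $P=0$, and $M=k[t]$ the simple polynomial module. Since $A_{1}(k)$ is simple, every nonzero submodule of $M$ is faithful, so $0\in\Ass_{\Lambda}M$; but $M$ is a torsion module (its reduced rank over the division ring of fractions of $A_{1}(k)$ is $0$) while $A_{1}(k)$ has reduced rank $1$, so $A_{1}(k)$ embeds in no finite power $M^{i}$. In this example the ideals $\mathrm{rAnn}(x_{1},\ldots,x_{n})$ strictly decrease forever without hitting a minimal element, which is exactly where your argument fails. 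A correct proof must use module-finiteness over $R$ in an essential way. One route: set $\mfp=P\cap R$ and note that $(\Lambda/P)_{\mfp}$ is a prime artinian, hence simple artinian, $k(\mfp)$-algebra while $\Lambda/P$ is $\mfp$-torsion-free; the hypothesis on $N$ forces $N_{\mfp}\neq 0$, so $(\Lambda/P)_{\mfp}\cong S^{n}$ embeds in $N_{\mfp}^{n}$ via some $\bar x_{1},\ldots,\bar x_{n}$; after clearing denominators one may take the $\bar x_{j}$ to come from $x_{j}\in N$, and the kernel of $a\mapsto(x_{j}a)$ is a $\mfp$-torsion submodule of $\Lambda/P$, hence zero.
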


\begin{Prop}\label{Prop:maxsimple}
	The correspondence $P\mapsto\widetilde{P}$ gives a bijection between $\Max\Lambda$ and the set of atoms which are represented by simple $\Lambda$-modules. Therefore $\Max\Lambda$ bijectively corresponds to the set of all the isomorphism classes of simple $\Lambda$-modules.
\end{Prop}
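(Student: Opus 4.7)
The plan is to combine the bijection $P \mapsto \widetilde{P}$ from Remark~\ref{rem:primeindecinjatom}~(\ref{item:primeatom}) with a characterization of when an atom $\widetilde{P}$ is represented by a simple $\Lambda$-module, namely when $\Lambda/P$ is a simple ring. Both implications then reduce, via Lemma~\ref{Lem:primeembedding} or a direct Zorn argument, to standard facts about simple and semisimple rings, together with the observation that atom equivalence between simple modules collapses to isomorphism.

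For the forward direction, suppose $P \in \Max\Lambda$. Then $\Lambda/P$ is a simple unital ring, so by Zorn's lemma it admits a maximal right ideal $\mfm$. Setting $S = (\Lambda/P)/\mfm$, this is a simple right $\Lambda$-module with $P \subset \Ann_{\Lambda}(S)$. The image of $\Ann_{\Lambda}(S)$ in $\Lambda/P$ is a proper two-sided ideal of the simple ring $\Lambda/P$, hence zero, giving $\Ann_{\Lambda}(S) = P$. Since $S$ is simple, $\Ass_{\Lambda}(S) = \{\Ann_{\Lambda}(S)\} = \{P\}$, and then by Remark~\ref{rem:primeindecinjatom}~(\ref{item:primeatom}) the atom $\widetilde{P}$ coincides with $\overline{S}$, so it is represented by a simple $\Lambda$-module.

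For the reverse direction, suppose $\widetilde{P} = \overline{S}$ for some simple $\Lambda$-module $S$. By Remark~\ref{rem:primeindecinjatom}~(\ref{item:primeatom}), $\Ass_{\Lambda}(S) = \{P\}$, and simplicity of $S$ forces $\Ann_{\Lambda}(S) = P$. Then Lemma~\ref{Lem:primeembedding} supplies a $\Lambda$-monomorphism $\Lambda/P \hookrightarrow S^{i}$ for some positive integer $i$. Since $S^{i}$ is a semisimple $\Lambda$-module, so is its submodule $\Lambda/P$; viewing this as a statement about the right regular module of the ring $\Lambda/P$, we conclude that $\Lambda/P$ is semisimple in the Wedderburn--Artin sense. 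Being both prime and semisimple, it must consist of a single Wedderburn factor, so $\Lambda/P$ is simple Artinian and $P$ is maximal.

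Finally, the last sentence of the proposition follows because any two simple $\Lambda$-modules representing the same atom share a common nonzero submodule, which by simplicity makes them isomorphic; the assignment $S \mapsto \overline{S}$ is therefore injective on isomorphism classes of simple modules, and by the bijection just established its image is precisely $\{\widetilde{P} \mid P \in \Max\Lambda\}$. I expect the only mildly nontrivial step to be the reverse implication, where one must recognize that a prime semisimple ring is simple Artinian; the remaining arguments are a straightforward bookkeeping exercise on annihilators and associated (prime) ideals.
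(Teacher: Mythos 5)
Your proof is correct. The forward direction is essentially the paper's argument (find a simple quotient $S$ of $\Lambda/P$, note $P\subset\Ann_{\Lambda}(S)\subsetneq\Lambda$, and invoke maximality of $P$), differing only in that you invoke Zorn where the paper appeals to noetherianity of $\Lambda/P$ to produce a maximal submodule. Your reverse direction, however, takes a genuinely different route. After obtaining the embedding $\Lambda/P\hookrightarrow S^{i}$ from Lemma~\ref{Lem:primeembedding}, you observe that $\Lambda/P$ is therefore a semisimple right module over itself and hence a semisimple ring, and then use the Wedderburn--Artin classification plus primeness to conclude that $\Lambda/P$ is simple Artinian, so $P$ is maximal. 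The paper avoids structure theory of semisimple rings: from $\Lambda/P\hookrightarrow S^{i}$ it deduces $\Lambda/P\cong S^{j}$, then picks a maximal ideal $Q\supset P$, notes $\Lambda/Q$ is a quotient of $S^{j}$ and hence $\cong S^{l}$, and finally compares $\AAss(\Lambda/P)$ with $\AAss(\Lambda/Q)$ to force $\widetilde{P}=\widetilde{Q}$ and hence $P=Q$. Your version is perhaps more transparent to a reader comfortable with noncommutative ring theory, while the paper's stays entirely inside the atom/associated-prime framework it has already set up, making it more self-contained relative to the rest of the text; both are valid and of comparable length.
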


\begin{proof}
	Let $P$ be a maximal ideal of $\Lambda$. Since $\Lambda/P$ is a noetherian $\Lambda$-module, there exist a simple $\Lambda$-module $S$ and a $\Lambda$-epimorphism $\Lambda/P\twoheadrightarrow S$. Since $P\subset\Ann_{\Lambda}(S)\subsetneq\Lambda$, we have $P=\Ann_{\Lambda}(S)$ by the maximality of $P$. Hence $\Ass_{\Lambda}S=\{P\}$, and by Theorem \ref{Thm:primeatom} (\ref{item:ThmAssAAss}), $\{\widetilde{P}\}=\AAss S=\{\overline{S}\}$.
	
	Conversely, let $P$ be a prime ideal of $\Lambda$, and assume that $\widetilde{P}$ is represented by a simple $\Lambda$-module $S$. Since $\AAss S=\{\overline{S}\}=\{\widetilde{P}\}$, by Theorem \ref{Thm:primeatom} (\ref{item:ThmAssAAss}), we have $\Ass_{\Lambda}S=\{P\}$. By Lemma \ref{Lem:primeembedding}, there exist a positive integer $i$ and a $\Lambda$-monomorphism $\Lambda/P\hookrightarrow S^{\oplus i}$, and hence $\Lambda/P$ is isomorphic to $S^{\oplus j}$ for some positive integer $j$. Let $Q$ be a maximal ideal of $\Lambda$ such that $P\subset Q$. Since $\Lambda/Q$ is a quotient $\Lambda$-module of $\Lambda/P\cong S^{\oplus j}$, $\Lambda/Q$ is isomorphic to $S^{\oplus l}$ for some positive integer $l$. By Proposition \ref{Prop:assexact}, $\{\widetilde{P}\}=\AAss(\Lambda/P)=\AAss S=\AAss(\Lambda/Q)=\{\widetilde{Q}\}$, and hence $P=Q$ is a maximal ideal of $\Lambda$.
\end{proof}

For a prime ideal $\mfp$ of $R$, the $R_{\mfp}$-algebra $\Lambda_{\mfp}$ also satisfies our assumption. We recall a description of prime ideals of $\Lambda_{\mfp}$.

\begin{Prop}\label{Prop:localprimemax}
	Let $\mfp$ be a prime ideal of $R$.
	\begin{enumerate}
		\item\label{item:localprime} The map
		\begin{equation*}
			\{Q\in\Spec\Lambda\mid Q\cap R\subset\mfp\}\to\Spec\Lambda_{\mfp},\ Q\mapsto Q\Lambda_{\mfp}
		\end{equation*}
		is bijective. The inverse map is given by $Q'\mapsto \varphi^{-1}(Q')$, where $\varphi:\Lambda\to\Lambda_{\mfp}$ is the canonical ring homomorphism.
		\item\label{item:localmax} The map in (\ref{item:localprime}) induces a bijection
		\begin{equation*}
			\{P\in\Spec\Lambda\mid P\cap R=\mfp\}\to\Max\Lambda_{\mfp}.
		\end{equation*}
	\end{enumerate}
\end{Prop}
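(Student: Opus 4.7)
The plan is to follow the classical prime-ideal correspondence under localization, adapted to the noncommutative setting by exploiting that $R\subset Z(\Lambda)$. Since $R\setminus\mfp$ is a central multiplicative set, it is automatically a two-sided Ore set, and $\Lambda_{\mfp}$ may be handled as in the commutative case: every element is a fraction $a/s$ with $a\in\Lambda$ and $s\in R\setminus\mfp$, with $a/s=b/t$ iff $u(ta-sb)=0$ for some $u\in R\setminus\mfp$.

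For (1), the core step is to establish the saturation identity
\begin{eqnarray*}
	\varphi^{-1}(Q\Lambda_{\mfp})=Q
\end{eqnarray*}
for every prime $Q$ of $\Lambda$ satisfying $Q\cap R\subset\mfp$. If $a\in\varphi^{-1}(Q\Lambda_{\mfp})$, then $a/1=q/s$ for some $q\in Q$, $s\in R\setminus\mfp$, and clearing denominators yields $u(sa-q)=0$ for some $u\in R\setminus\mfp$, so $(us)a=uq\in Q$. Since $us$ is central, $(us)\Lambda a\subset Q$, and primality forces either $us\in Q$ (impossible since $us\in R\setminus\mfp$ while $Q\cap R\subset\mfp$) or $a\in Q$. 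The reverse identity $\varphi^{-1}(Q')\Lambda_{\mfp}=Q'$ is formal. Properness of $Q\Lambda_{\mfp}$ is equivalent to $Q\cap(R\setminus\mfp)=\emptyset$; for primality, $a\Lambda b\subset Q\Lambda_{\mfp}$ with $a,b\in\Lambda$ pulls back via the identity to $a\Lambda b\subset Q$, and the general case for $a/s,b/t\in\Lambda_{\mfp}$ reduces to this by multiplying by the units $s/1,t/1$.

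For (2), I exploit that $\Lambda_{\mfp}$ is a finite module over the commutative local ring $R_{\mfp}$ with maximal ideal $\mfp R_{\mfp}$. By Nakayama, every maximal two-sided ideal $M$ of $\Lambda_{\mfp}$ must contain $\mfp\Lambda_{\mfp}$: otherwise $M+\mfp\Lambda_{\mfp}=\Lambda_{\mfp}$ gives $\Lambda_{\mfp}/M=\mfp R_{\mfp}\cdot(\Lambda_{\mfp}/M)$, forcing the nonzero finite $R_{\mfp}$-module $\Lambda_{\mfp}/M$ to vanish. Also, $\Lambda_{\mfp}/\mfp\Lambda_{\mfp}$ is a finite-dimensional $k(\mfp)$-algebra, hence artinian, so all its prime ideals are maximal. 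Therefore $\Max\Lambda_{\mfp}$ coincides with the set of primes of $\Lambda_{\mfp}$ containing $\mfp\Lambda_{\mfp}$. Pulling this back through the bijection of (1), a prime $P$ with $P\cap R\subset\mfp$ satisfies $P\Lambda_{\mfp}\supset\mfp\Lambda_{\mfp}$ iff $P=\varphi^{-1}(P\Lambda_{\mfp})\supset\mfp$, which together with the contraction condition gives exactly $P\cap R=\mfp$.

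The principal obstacle is the saturation identity in (1): one must invoke the centrality of $R\setminus\mfp$ at precisely the right moment to convert $(us)a\in Q$ into the two-sided containment $(us)\Lambda a\subset Q$ so as to engage the noncommutative prime property of $Q$. Once that is secured, both parts assemble from the standard correspondence argument together with a single application of Nakayama.
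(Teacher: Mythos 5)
Your proposal is correct. The argument differs from the paper's in a couple of respects worth noting. For part (\ref{item:localprime}), the paper simply asserts it can be shown straightforwardly, whereas you supply the content: the centrality of $R\setminus\mfp$ is used at the right spot to promote $(us)a\in Q$ to $(us)\Lambda a\subset Q$, which is exactly what is needed to invoke the noncommutative prime condition. One small item that you leave implicit but should be checked to close the bijection is that $\varphi^{-1}(Q')$ is prime (preimage of a prime is not automatic for non-surjective ring maps; here it works because $\Lambda_{\mfp}$ is generated over $\varphi(\Lambda)$ by central units) and that $\varphi^{-1}(Q')\cap R\subset\mfp$ (any $r\in R\setminus\mfp$ maps to a unit, so cannot lie in $\varphi^{-1}(Q')$). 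These are routine and in the same spirit as the rest of your sketch.

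For part (\ref{item:localmax}), the paper cites McConnell--Robson \cite[10.2.13, Proposition (iii)]{McConnellRobson} to transfer maximality through the contraction $Q\Lambda_{\mfp}\cap R_{\mfp}$, and then needs the identity $Q\Lambda_{\mfp}\cap R_{\mfp}=(Q\cap R)R_{\mfp}$, which itself quietly uses the saturation from (\ref{item:localprime}). You instead characterize $\Max\Lambda_{\mfp}$ intrinsically as the primes containing $\mfp\Lambda_{\mfp}$, via Nakayama in one direction and the artinianness of the finite $k(\mfp)$-algebra $\Lambda_{\mfp}/\mfp\Lambda_{\mfp}$ (primes in an artinian ring are maximal, by Artin--Wedderburn for the prime quotient) in the other, and then pull this containment criterion back through the bijection of (\ref{item:localprime}). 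This is a self-contained, elementary replacement for the McConnell--Robson citation and avoids having to separately verify the contraction identity; the paper's route is shorter modulo the external reference. Both are valid.
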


\begin{proof}
	(1) This can be shown straightforwardly.
	
	(2) Let $Q$ be a prime ideal of $\Lambda$ such that $Q\cap R\subset\mfp$. By \cite[10.2.13, Proposition (iii)]{McConnellRobson}, $Q\Lambda_{\mfp}$ is a maximal ideal of $\Lambda_{\mfp}$ if and only if $Q\Lambda_{\mfp}\cap R_{\mfp}$ is a maximal ideal of $R_{\mfp}$. Since $Q\Lambda_{\mfp}\cap R_{\mfp}=(Q\cap R)R_{\mfp}$, this condition is equivalent to $Q\cap R=\mfp$.
\end{proof}

For any prime ideal $P$ of $\Lambda$, it can be easily shown that $\mfp=P\cap R$ is a prime ideal of $R$. Then by Proposition \ref{Prop:localprimemax} (\ref{item:localmax}), $P\Lambda_{\mfp}$ is a maximal ideal of $\Lambda_{\mfp}$. By Proposition \ref{Prop:maxsimple}, $P\Lambda_{\mfp}$ determines an isomorphism class $S(P)$ of simple $\Lambda_{\mfp}$-modules. We also denote by $S(P)$ a simple $\Lambda_{\mfp}$-module which represents the isomorphism class $S(P)$.

\begin{Thm}\label{Thm:primesimple}
	The map
	\begin{equation*}
		\Spec\Lambda\to\coprod_{\mfp\in\Spec R}\mcS_{\mfp},\ P\mapsto S(P)
	\end{equation*}
	is bijective, where for a prime ideal $\mfp$ of $R$, $\mcS_{\mfp}$ is the set of all the isomorphism classes of simple $\Lambda_{\mfp}$-modules.
	
	Consequently, the map
	\begin{equation*}
		\ASpec(\Mod\Lambda)\to\coprod_{\mfp\in\Spec R}\mcS_{\mfp},\ \widetilde{P}\mapsto S(P)
	\end{equation*}
	is bijective.
\end{Thm}

\begin{proof}
	The first assertion follows from Proposition \ref{Prop:localprimemax} (\ref{item:localmax}) and the definition of $S(P)$. By Theorem \ref{Thm:primeatom} (\ref{item:Thmprimeatom}), we obtain the description of the atom spectrum.
\end{proof}

The following lemma clarifies injective modules over $\Lambda_{\mfp}$ for each prime ideal $\mfp$ of $R$.

\begin{Lem}\label{Lem:localinj}
	Let $I$ be an injective $\Lambda$-module and $\mfp$ a prime ideal of $R$.
	\begin{enumerate}
		\item\label{item:injlocal} {\rm (Bass \cite[Lemma 1.2]{Bass62})} $I_{\mfp}$ is an injective $\Lambda_{\mfp}$-module.
		\item\label{item:injlocalsplit} {\rm (Goto and Nishida \cite[Lemma 2.4.1]{GotoNishida})} The canonical $\Lambda$-homomorphism $I\to I_{\mfp}$ is a split $\Lambda$-epimorphism.
	\end{enumerate}
\end{Lem}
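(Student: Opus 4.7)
Both statements can be approached via a decomposition argument, and each really follows from the case of an indecomposable injective $I(P)$. The plan is to first establish a localization dichotomy for $I(P)$, and then reduce the general injective case to this by using the direct sum decomposition of injective $\Lambda$-modules guaranteed by Matlis's theorem (together with Remark \ref{rem:primeindecinjatom} (\ref{item:primeindecinj})).

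For the dichotomy, fix a prime $P$ of $\Lambda$ and write $\mfp'=P\cap R$. I would prove:
(a) if $\mfp'\not\subset\mfp$, then $I(P)_{\mfp}=0$; and
(b) if $\mfp'\subset\mfp$, then the canonical map $I(P)\to I(P)_{\mfp}$ is an isomorphism of $\Lambda$-modules (which will then automatically upgrade to a $\Lambda_{\mfp}$-module structure on $I(P)$).
For (a), pick $s\in\mfp'\setminus\mfp$; using $\Lambda$-noetherianness, every $x\in I(P)$ lies in a finitely generated submodule, all of whose associated primes equal $P$, so Lemma \ref{Lem:primeembedding} forces $x$ to be annihilated by a suitable power of $s$, whence $x/1=0$. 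For (b), the argument is dual: any $s\in R\setminus\mfp$ satisfies $s\notin\mfp'$, so multiplication by $s$ has zero kernel on the $P$-essential extension $I(P)$ (any nonzero submodule has associated prime $P$ and $s\notin P$), and by injectivity of $I(P)$ and indecomposability, the induced monomorphism $s\cdot:I(P)\to I(P)$ is in fact an automorphism (a proper monomorphism would split, contradicting indecomposability). Thus every $s\in R\setminus\mfp$ acts as a unit on $I(P)$, so $I(P)$ is already a $\Lambda_{\mfp}$-module and the localization map is an isomorphism.

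Granting the dichotomy, part (\ref{item:injlocalsplit}) is immediate: decomposing $I=\bigoplus_{P}I(P)^{(\lambda_{P})}$ and using that localization commutes with direct sums, the map $I\to I_{\mfp}$ is the projection onto the summand $\bigoplus_{P\cap R\subset\mfp}I(P)^{(\lambda_{P})}$, which splits via the canonical inclusion of this summand back into $I$. Part (\ref{item:injlocal}) then follows from (\ref{item:injlocalsplit}): since the split surjection $I\twoheadrightarrow I_{\mfp}$ exhibits $I_{\mfp}$ as a direct summand of the $\Lambda$-injective $I$, the module $I_{\mfp}$ is $\Lambda$-injective, hence $\Lambda_{\mfp}$-injective (an extension problem of $\Lambda_{\mfp}$-modules is an extension problem of $\Lambda$-modules whose solution is automatically $\Lambda_{\mfp}$-linear, since all modules in sight are already $\Lambda_{\mfp}$-modules on which $R\setminus\mfp$ acts invertibly).

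\textbf{Main obstacle.} The subtle step is verifying, in part (b) of the dichotomy, that multiplication by $s\in R\setminus\mfp$ acts invertibly on $I(P)$ when $\mfp'\subset\mfp$. The ``injective plus monomorphism implies automorphism'' argument for indecomposable injectives is standard, but one must carefully justify that $s$ acts as a monomorphism on \emph{all} of $I(P)$ (not merely on $\Lambda/P$), which is where Lemma \ref{Lem:primeembedding} together with $\Ass_{\Lambda}(I(P))=\{P\}$ is essential: any nonzero element generates a submodule containing (a power of) $\Lambda/P$, on which $s\notin P$ is a non-zero-divisor. The remaining bookkeeping—commuting localization with direct sums and transferring injectivity across the split epi—is routine.
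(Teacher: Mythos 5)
The paper gives no proof of Lemma \ref{Lem:localinj}; both parts are quoted with citations to Bass and to Goto--Nishida. Your plan to deduce both parts from a localization dichotomy for the indecomposable injectives $I(P)$ is the natural route (and is essentially how Goto--Nishida argue for part (\ref{item:injlocalsplit})). The reduction to the indecomposable case via Matlis, the treatment of case (b), and the transfer of injectivity from $\Lambda$ to $\Lambda_{\mfp}$ are all sound.

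However, there is a genuine gap in your case (a). You claim that because $\Ass_{\Lambda}(x\Lambda)=\{P\}$, Lemma \ref{Lem:primeembedding} ``forces $x$ to be annihilated by a suitable power of $s$.'' Lemma \ref{Lem:primeembedding} only produces a $\Lambda$-monomorphism $\Lambda/P\hookrightarrow(x\Lambda)^{i}$; embedding $\Lambda/P$ into a power of $x\Lambda$ gives no control on $\Ann_{R}(x\Lambda)$, let alone on $s$-power torsion (the direction of the embedding is the wrong one for an annihilator bound). What you actually need is that $\sqrt{\Ann_{R}(x\Lambda)}=P\cap R=\mfp'$ for every nonzero $x\in I(P)$ — equivalently, that $\Ass_{R}(I(P))=\{\mfp'\}$, or that $P$ is the unique minimal prime over $\Ann_{\Lambda}(x\Lambda)$. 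This is true, but it is a real theorem about noetherian $R$-algebras (it is where one uses that $\Lambda$ is fully bounded noetherian, or the $\Ass_{R}$/$\Ass_{\Lambda}$ comparison that Goto--Nishida prove separately); it is not contained in Lemma \ref{Lem:primeembedding} nor established anywhere in this paper. As written, your argument only shows that the particular elements $z$ with $\Ann_{\Lambda}(z\Lambda)=P$ die in $I(P)_{\mfp}$, not that all of $I(P)$ does. One can bypass part of the issue by proving (\ref{item:injlocal}) directly via Baer's criterion and the isomorphism $\Ext^{1}_{\Lambda}(\Lambda/J,I)_{\mfp}\cong\Ext^{1}_{\Lambda_{\mfp}}(\Lambda_{\mfp}/J\Lambda_{\mfp},I_{\mfp})$ for right ideals $J$ (using a resolution of $\Lambda/J$ by finitely generated frees), but case (a) of the dichotomy, and hence part (\ref{item:injlocalsplit}), still requires the missing support computation.
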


The residue field $k(\widetilde{P})$ of the atom $\widetilde{P}$ can be described by using $S(P)$.

\begin{Prop}\label{Prop:residuefieldsimple}
	Let $P$ be a prime ideal of $\Lambda$ and $\mfp=P\cap R$. Then there exists an isomorphism $k(\widetilde{P})\cong\End_{\Lambda_{\mfp}}(S(P))$ of skew fields.
\end{Prop}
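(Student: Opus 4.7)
The plan is to identify $I(P)$ with its localization $I(P)_{\mfp}$, recognize this common module as the injective hull of $S(P)$ in $\Mod\Lambda_{\mfp}$, and then transport the computation of $k(\widetilde{P})$ into the category $\Mod\Lambda_{\mfp}$, where the description of $k(\alpha)$ for $\alpha$ represented by a simple object applies directly.

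By Remark \ref{rem:primeindecinjatom}(\ref{item:primeatom}) and Theorem \ref{Thm:atominj}, $E(\widetilde{P}) \cong I(P)$, and by Proposition \ref{Prop:residuefield}(\ref{item:injendo}) we have $k(\widetilde{P}) \cong \End_{\Lambda}(I(P))/J_{\widetilde{P}}$. The first step is to show $I(P) \cong I(P)_{\mfp}$ as $\Lambda$-modules. Lemma \ref{Lem:localinj}(\ref{item:injlocalsplit}) exhibits $I(P)_{\mfp}$ as a direct summand of $I(P)$, and to see it is nonzero I would localize the embedding $\Lambda/P \hookrightarrow E_{\Lambda}(\Lambda/P) \cong I(P)^{n}$ of Remark \ref{rem:primeindecinjatom}(\ref{item:primeindecinj}) to get $\Lambda_{\mfp}/P\Lambda_{\mfp} \hookrightarrow I(P)_{\mfp}^{n}$; the source is nonzero because $P\Lambda_{\mfp}$ is a (maximal) proper ideal by Proposition \ref{Prop:localprimemax}(\ref{item:localmax}). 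Indecomposability of $I(P)$ then forces $I(P) \cong I(P)_{\mfp}$.

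The second step is to reinterpret this common module inside $\Mod\Lambda_{\mfp}$. By Lemma \ref{Lem:localinj}(\ref{item:injlocal}) it is injective over $\Lambda_{\mfp}$, and it remains indecomposable: any $\Lambda$-linear endomorphism of a $\Lambda_{\mfp}$-module is automatically $\Lambda_{\mfp}$-linear, so $\End_{\Lambda_{\mfp}}(I(P)) = \End_{\Lambda}(I(P))$ is still a local ring. Picking a simple $\Lambda_{\mfp}$-submodule of $\Lambda_{\mfp}/P\Lambda_{\mfp}$ and composing it with the embedding from Step 1 produces an inclusion $S(P) \hookrightarrow I(P)$ in $\Mod\Lambda_{\mfp}$, so $I(P)$ realizes the injective hull $E(\overline{S(P)})$. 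Since $S(P)$ is simple by Proposition \ref{Prop:maxsimple}, the representability discussion immediately following Proposition \ref{Prop:nonzeroiffmono} gives $k(\overline{S(P)}) \cong \End_{\Lambda_{\mfp}}(S(P))$ as skew fields.

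Finally, applying Proposition \ref{Prop:residuefield}(\ref{item:injendo}) inside $\Mod\Lambda_{\mfp}$ as well, and using the identification $\End_{\Lambda}(I(P)) = \End_{\Lambda_{\mfp}}(I(P))$, I obtain the chain
\[
k(\widetilde{P}) \cong \End_{\Lambda}(I(P))/J_{\widetilde{P}} = \End_{\Lambda_{\mfp}}(I(P))/J \cong k(\overline{S(P)}) \cong \End_{\Lambda_{\mfp}}(S(P)),
\]
where $J$ denotes the unique maximal ideal of the same local ring now viewed from $\Mod\Lambda_{\mfp}$. The main obstacle is the identification $I(P) \cong I(P)_{\mfp}$: without it the two residue-field computations would live in different ambient categories and could not be directly matched up. Once it is in hand, the rest is careful bookkeeping about which category one works in, most importantly the equality $\End_{\Lambda}(I(P)) = \End_{\Lambda_{\mfp}}(I(P))$.
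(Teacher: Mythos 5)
Your proposal is correct and follows essentially the same route as the paper: establish $I(P)\cong I(P)_{\mfp}$ via Lemma~\ref{Lem:localinj}, realize $I(P)$ as the injective hull of $S(P)$ in $\Mod\Lambda_{\mfp}$, and transport the residue-field computation across the identification $\End_{\Lambda}(I(P))=\End_{\Lambda_{\mfp}}(I(P))$. The paper phrases the last step as a direct lifting of endomorphisms of $S(P)$ to $I(P)_{\mfp}$ followed by one application of Proposition~\ref{Prop:residuefield}(\ref{item:injendo}), rather than invoking the simple-atom formula $k(\overline{S})\cong\End_{\mcA}(S)$ on the $\Lambda_{\mfp}$ side as you do, but these encode the same computation; one small citation nit is that the formula $k(\overline{S})\cong\End_{\mcA}(S)$ appears just \emph{before}, not after, Proposition~\ref{Prop:nonzeroiffmono}.
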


\begin{proof}
	Set $I=I(P)$. By Theorem \ref{Thm:primeindecinj} (\ref{item:Thmprimeindecinj}), $\Ass_{\Lambda}I=\{P\}$. Hence by Lemma \ref{Lem:primeembedding}, there exist a positive integer $i$ and a $\Lambda$-monomorphism $\Lambda/P\hookrightarrow I^{\oplus i}$. By localizing this morphism at $\mfp$, we have $\Lambda_{\mfp}/P\Lambda_{\mfp}\hookrightarrow I_{\mfp}^{\oplus i}$. By Proposition \ref{Prop:maxsimple} and the proof of it, $\Lambda_{\mfp}/P\Lambda_{\mfp}\cong S(P)^{\oplus j}$ in $\Mod\Lambda_{\mfp}$ for some positive integer $j$. By Lemma \ref{Lem:localinj}, $I_{\mfp}$ is an indecomposable injective $\Lambda_{\mfp}$-module, and $I_{\mfp}\cong I$ in $\Mod\Lambda$. Since $I_{\mfp}$ contains $S(P)$ as a $\Lambda_{\mfp}$-submodule, we have $I_{\mfp}\cong E_{\Lambda_{\mfp}}(S(P))$. Since any $\Lambda_{\mfp}$-homomorphism $S(P)\to S(P)$ can be lifted to a $\Lambda_{\mfp}$-homomorphism $I_{\mfp}\to I_{\mfp}$, the ring homomorphism $\End_{\Lambda_{\mfp}}(I_{\mfp})\to\End_{\Lambda_{\mfp}}(S(P))$ is surjective. Then we have a surjective ring homomorphism $\End_{\Lambda}(I)\cong\End_{\Lambda}(I_{\mfp})\cong\End_{\Lambda_{\mfp}}(I_{\mfp})\to\End_{\Lambda_{\mfp}}(S(P))$. By Proposition \ref{Prop:residuefield} (\ref{item:injendo}), $\End_{\Lambda_{\mfp}}(S(P))$ is isomorphic to $k(\widetilde{P})$.
\end{proof}

In order to give a description of Bass numbers, we regard $S(P)$ as a $\Lambda_{\mfp}$-submodule of $I(P)$ for each prime ideal $P$ of $\Lambda$. In fact, $S(P)$ is characterized as follows.

\begin{Lem}
	Let $P$ be a prime ideal of $\Lambda$ and $\mfp=P\cap R$. Then $\{x\in I(P)\mid xP=0\}$ is a $\Lambda_{\mfp}$-module which is isomorphic to $S(P)$.
\end{Lem}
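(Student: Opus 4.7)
The plan is to set $N=\{x\in I(P)\mid xP=0\}$ and identify $N$ with the socle of $I(P)$ when $I(P)$ is regarded as a $\Lambda_{\mfp}$-module; the socle is precisely $S(P)$.

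First, I would check that $N$ is a $\Lambda_{\mfp}$-submodule of $I(P)$. Since $P$ is a two-sided ideal, for any $x\in N$ and $\lambda\in\Lambda$ we have $(x\lambda)p=x(\lambda p)=0$ for all $p\in P$, so $N$ is at least a $\Lambda$-submodule. From the proof of Proposition \ref{Prop:residuefieldsimple} the canonical $\Lambda$-homomorphism $I(P)\to I(P)_{\mfp}$ is an isomorphism, so $I(P)$ is naturally a $\Lambda_{\mfp}$-module. Because $R$ lies in the center of $\Lambda$, for $x\in N$, $s\in R\setminus\mfp$, and $p\in P$ one has $(xs^{-1})p=x(ps^{-1})=(xp)s^{-1}=0$, so $N$ is preserved by the $\Lambda_{\mfp}$-action.

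Second, the same centrality gives $N\cdot P\Lambda_{\mfp}=0$, so $N$ carries the structure of a $\Lambda_{\mfp}/P\Lambda_{\mfp}$-module. By Proposition \ref{Prop:localprimemax}(\ref{item:localmax}) the ideal $P\Lambda_{\mfp}$ is maximal in $\Lambda_{\mfp}$, and $\Lambda_{\mfp}/P\Lambda_{\mfp}$ is finite-dimensional over $k(\mfp)=R_{\mfp}/\mfp R_{\mfp}$ because $\Lambda$ is finitely generated over $R$. Hence $\Lambda_{\mfp}/P\Lambda_{\mfp}$ is a simple Artinian ring, so every module over it is semisimple and a direct sum of copies of its unique simple module, which by the definition of $S(P)$ is $S(P)$ itself. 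In particular $N$ is semisimple as a $\Lambda_{\mfp}$-module.

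Finally, from the proof of Proposition \ref{Prop:residuefieldsimple} we have $I(P)\cong E_{\Lambda_{\mfp}}(S(P))$, so $S(P)$ is an essential $\Lambda_{\mfp}$-submodule of $I(P)$; consequently the $\Lambda_{\mfp}$-socle of $I(P)$ coincides with $S(P)$ (any simple submodule disjoint from $S(P)$ would contradict essentiality). Since $N$ is semisimple, $N\subset\mathrm{soc}(I(P))=S(P)$. Conversely $S(P)$ is annihilated by $P\Lambda_{\mfp}\supset P$, so $S(P)\subset N$, giving $N=S(P)$. The only subtle point is bookkeeping with the two ring actions — ensuring the $\Lambda_{\mfp}$-structure on $I(P)$ really extends the given $\Lambda$-action, and that $N$ is stable under the inverses of elements of $R\setminus\mfp$ — and both facts are automatic from the centrality of $R$ in $\Lambda$.
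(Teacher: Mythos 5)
Your proof is correct and follows essentially the same route as the paper: both arguments show that $N$ is a $\Lambda_{\mfp}$-submodule annihilated by $P\Lambda_{\mfp}$, and both exploit the fact that $\Lambda_{\mfp}/P\Lambda_{\mfp}$ is a finite-dimensional simple Artinian algebra together with the indecomposability of $I(P)$ as a $\Lambda_{\mfp}$-module. The only variation is the final step: the paper observes that $N$ is a uniform module over the semisimple ring $\Lambda_{\mfp}/P\Lambda_{\mfp}$ and therefore simple, whereas you argue via the socle of $I(P)$ and a double inclusion $S(P)\subset N\subset\operatorname{soc}_{\Lambda_{\mfp}}I(P)=S(P)$; these two finishes are interchangeable and rest on the same structural facts.
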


\begin{proof}
	By Lemma \ref{Lem:localinj} (\ref{item:injlocalsplit}), $I(P)$ can be regarded as an indecomposable injective $\Lambda_{\mfp}$-module. As in the proof of Proposition \ref{Prop:residuefieldsimple}, $S(P)$ is isomorphic to a unique simple $\Lambda_{\mfp}$-submodule of $I(P)$. Set $N=\{x\in I(P)\mid xP=0\}$. For any $x\in N$ and $s\in R\setminus\mfp$, set $y=xs^{-1}$. Since $yPs=xP=0$, and $s$ acts on $I(P)$ as an isomorphism, we obtain $yP=0$. Hence $xs^{-1}\in N$, and this shows that $N$ is a $\Lambda_{\mfp}$-submodule of $I(P)$. Since $I(P)$ is a uniform $\Lambda_{\mfp}$-module, and $NP\Lambda_{\mfp}=0$, $N$ is a uniform $\Lambda_{\mfp}/P\Lambda_{\mfp}$-module. Since $\Lambda_{\mfp}/P\Lambda_{\mfp}$ is a finite-dimensional $R_{\mfp}/\mfp R_{\mfp}$-algebra, it is artinian. Furthermore $\Lambda_{\mfp}/P\Lambda_{\mfp}$ is a simple ring and hence is Morita-equivalent to a skew field by Artin-Wedderburn's theorem. Therefore $N$ is a simple $\Lambda_{\mfp}/P\Lambda_{\mfp}$-module and hence is also simple as a $\Lambda_{\mfp}$-module.
\end{proof}

\begin{Thm}\label{Thm:extBasssimple}
	Let $P$ be a prime ideal of $\Lambda$ and $\mfp=P\cap R$.
	\begin{enumerate}
		\item\label{item:extsimple} There exists a functorial isomorphism
		\begin{equation*}
			\RHom_{\Lambda}(\widetilde{P},-)\cong\RHom_{\Lambda_{\mfp}}(S(P),(-)_{\mfp})
		\end{equation*}
		of triangle functors $\D(\Mod\Lambda)\to\D(\Mod k(\widetilde{P}))$.
		\item For any nonnegative integer $i$ and any $\Lambda$-module $M$,
		\begin{equation*}
			\mu_{i}(\widetilde{P},M)=\dim_{k(\widetilde{P})}\Ext^{i}_{\Lambda_{\mfp}}(S(P),M_{\mfp}).
		\end{equation*}
	\end{enumerate}
\end{Thm}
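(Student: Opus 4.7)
The plan is to first build a natural transformation $\varphi_{M}:\Hom_{\Lambda}(\widetilde{P},M)\to\Hom_{\Lambda_{\mfp}}(S(P),M_{\mfp})$ of left exact functors $\Mod\Lambda\to\Mod k(\widetilde{P})$, to show that it is an isomorphism for every $\Lambda$-module $M$, and then to derive it using injective (or K-injective) resolutions; part (2) will follow at once by taking $\H^{i}$ and invoking Theorem \ref{Thm:Bassext}.

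For the construction, since $P\cap R=\mfp$ every $s\in R\setminus\mfp$ acts invertibly on $I(P)=E(\widetilde{P})$, so Lemma \ref{Lem:localinj} (\ref{item:injlocalsplit}) forces the canonical map $I(P)\to I(P)_{\mfp}$ to be an isomorphism. For any nonzero $\Lambda$-submodule $U\subseteq I(P)$, the localization therefore realizes $U_{\mfp}$ as a $\Lambda_{\mfp}$-submodule of $I(P)$ containing $U$; combining uniformness of $I(P)$ as $\Lambda$-module with simplicity of $S(P)$ as a $\Lambda_{\mfp}$-module, the $\Lambda_{\mfp}$-submodule $U_{\mfp}\cap S(P)$ of $S(P)$ is nonzero and hence equals $S(P)$, so $S(P)\subseteq U_{\mfp}$. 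Given a representative $f:U\to M$ of $[f]\in\Hom_{\Lambda}(\widetilde{P},M)$, I define $\varphi_{M}([f]):=f_{\mfp}|_{S(P)}:S(P)\to M_{\mfp}$; independence of the representative (two representatives agree on some smaller essential $V\subseteq U\cap U'$, whose localization still contains $S(P)$) and naturality in $M$ are routine.

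Next I would verify that $\varphi_{M}$ is an isomorphism whenever $M$ is injective. Both sides commute with arbitrary direct sums (the left-hand side by Remark \ref{rem:morspace} (\ref{item:cofinal}), and the right-hand side because $S(P)$ is simple, hence finitely generated, over $\Lambda_{\mfp}$), so it is enough to treat $M=E(\alpha)$ for a single atom $\alpha\in\ASpec(\Mod\Lambda)$. For $\alpha\ne\widetilde{P}$, the source vanishes by orthogonality of non-isomorphic simples in the spectral category, and the target vanishes too: either $E(\alpha)_{\mfp}=0$, or else $E(\alpha)_{\mfp}\cong I(P_{\alpha})$ is indecomposable injective over $\Lambda_{\mfp}$, not isomorphic to $E_{\Lambda_{\mfp}}(S(P))=I(P)$, and admits no nonzero map from $S(P)$ because $S(P)$ is essential in $I(P)$ as $\Lambda_{\mfp}$-module. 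For $\alpha=\widetilde{P}$, both sides are one-dimensional over $k(\widetilde{P})$: the source by definition, and the target by Proposition \ref{Prop:residuefieldsimple} applied inside $\Mod\Lambda_{\mfp}$ to the atom represented by $S(P)$. Tracing $[\id_{I(P)}]$ shows $\varphi_{I(P)}$ produces the inclusion $S(P)\hookrightarrow I(P)=I(P)_{\mfp}$, which is nonzero; hence $\varphi_{I(P)}$ is an isomorphism of one-dimensional skew-field vector spaces.

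For general $M$, I would take the start $0\to M\to E^{0}\to E^{1}$ of an injective resolution; since $(-)_{\mfp}$ is exact and, by Lemma \ref{Lem:localinj} (\ref{item:injlocal}), carries injective $\Lambda$-modules to injective $\Lambda_{\mfp}$-modules, applying both functors yields two left exact four-term sequences, and the isomorphisms $\varphi_{E^{0}},\varphi_{E^{1}}$ force $\varphi_{M}$ to be an isomorphism via kernel comparison. For part (\ref{item:extsimple}), pick a K-injective resolution $M\to\mcI$ in $\D(\Mod\Lambda)$; the complex $\mcI_{\mfp}$ is termwise injective over $\Lambda_{\mfp}$ and remains K-injective (as $R_{\mfp}$ is flat over $R$), so both derived functors are computed termwise by $\varphi_{\mcI^{\bullet}}$ and the natural isomorphism of triangle functors follows. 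Part (2) is then immediate from part (1), Theorem \ref{Thm:Bassext}, and the identification $k(\widetilde{P})\cong\End_{\Lambda_{\mfp}}(S(P))$ of Proposition \ref{Prop:residuefieldsimple}. The main obstacle I anticipate is the case $\alpha=\widetilde{P}$ in the injective computation—one must genuinely identify $\varphi_{I(P)}$ (not merely know that both sides are abstractly one-dimensional), but this is settled by evaluating on the identity morphism as above.
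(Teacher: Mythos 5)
Your construction of $\varphi_{M}$ is the same as the paper's (including the observation that $I(P)\to I(P)_{\mfp}$ is an isomorphism and that $S(P)\subseteq U_{\mfp}$ for every nonzero $U\subseteq I(P)$), but your route to showing that $\varphi_{M}$ is an isomorphism is genuinely different and, for that part, works. The paper proves injectivity of $\varphi_{M}$ directly from Proposition~\ref{Prop:nonzeroiffmono} and proves surjectivity by a fairly delicate lifting argument through $E_{\Lambda}(M)\to E_{\Lambda}(M)_{\mfp}$ and its section $\nu$, all for arbitrary $M$ at once. You instead observe that both sides commute with arbitrary direct sums and are left exact, reduce first to injective $M$ and then to $M=E(\alpha)$, handle $\alpha\ne\widetilde{P}$ by showing both sides vanish, and handle $\alpha=\widetilde{P}$ by a dimension count plus tracing the identity. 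This is a clean and arguably more transparent reduction; the orthogonality arguments you sketch (that $E(\alpha)_{\mfp}$ is zero or an indecomposable injective $\Lambda_{\mfp}$-module not isomorphic to $I(P)$) are correct but should be spelled out via Lemma~\ref{Lem:localinj}: $E(\alpha)_{\mfp}$ is a summand of $E(\alpha)$, hence zero or isomorphic to $E(\alpha)$, and it cannot be $I(P)$ without forcing $\alpha=\widetilde{P}$. You should also explicitly verify that $\varphi_{M}$ is $k(\widetilde{P})$-linear (the paper does, via a diagram lifting $[h]$ to an endomorphism of $I(P)$); this is routine but is needed before the one-dimensional count in the $\alpha=\widetilde{P}$ case is meaningful.

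The one genuine gap is in the passage to derived functors for part~(\ref{item:extsimple}). You assert that if $M\to\mcI$ is a K-injective resolution in $\Mod\Lambda$ then $\mcI_{\mfp}$ is K-injective over $\Lambda_{\mfp}$ ``as $R_{\mfp}$ is flat over $R$''. Flatness only gives that $(-)_{\mfp}$ is exact and that it sends injective modules to injective modules (Lemma~\ref{Lem:localinj}~(\ref{item:injlocal})); for unbounded complexes, a complex of injectives need not be K-injective, and $(-)_{\mfp}$ is not a right adjoint, so the usual ``exact left adjoint implies K-injectivity is preserved'' argument does not apply. The paper avoids this entirely: it uses that $(-)_{\mfp}$ has an \emph{exact right adjoint} (restriction of scalars), hence sends K-\emph{projective} complexes to K-projective complexes, and then invokes the composition theorem \cite[Proposition 5.4]{Hartshorne} to get $\mathbf{R}\bigl(\Hom_{\Lambda_{\mfp}}(S(P),(-)_{\mfp})\bigr)\cong\RHom_{\Lambda_{\mfp}}(S(P),-)\circ(-)_{\mfp}$, after which the underived isomorphism $\Hom_{\Lambda}(\widetilde{P},-)\cong\Hom_{\Lambda_{\mfp}}(S(P),(-)_{\mfp})$ finishes part (\ref{item:extsimple}). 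You should either adopt that argument or supply an actual proof that $(-)_{\mfp}$ preserves K-injectivity (which would need, e.g., the stability of the torsion theory $\{M:M_{\mfp}=0\}$, not merely flatness). For part (2), which only concerns $\Ext^{i}$ for $i\ge0$, your bounded-below injective resolution argument is untouched by this issue.
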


\begin{proof}
	(1) By \cite[Proposition 2.12]{BokstedtNeeman}, every complex in $\Mod\Lambda$ has a K-projective resolution. Since the localization functor $(-)_{\mfp}=-\otimes\Lambda_{\mfp}:\Mod\Lambda\to\Mod\Lambda_{\mfp}$ has an exact right adjoint $\Hom_{\Lambda_{\mfp}}(\Lambda_{\mfp},-):\Mod\Lambda_{\mfp}\to\Mod\Lambda$, it sends any K-projective complex in $\Mod\Lambda$ to a K-projective complex in $\Mod\Lambda_{\mfp}$. Hence by \cite[Proposition 5.4]{Hartshorne}, the right derived functor of $\Hom_{\Lambda_{\mfp}}(S(P),(-)_{\mfp})$ is the composite of the induced triangle functor $(-)_{\mfp}:\D(\Mod\Lambda)\to\D(\Mod\Lambda_{\mfp})$ and the right derived functor $\RHom_{\Lambda_{\mfp}}(S(P),-):\D(\Mod\Lambda_{\mfp})\to\D(\Mod k(\widetilde{P}))$.
	Therefore it suffices to show that there exists a functorial isomorphism
	\begin{equation*}
		\Hom_{\Lambda}(\widetilde{P},-)\cong\Hom_{\Lambda_{\mfp}}(S(P),(-)_{\mfp})
	\end{equation*}
	of additive functors $\Mod\Lambda\to\Mod k(\widetilde{P})$.
	
	We regard $S(P)$ as a $\Lambda_{\mfp}$-submodule of $I(P)$. By definition, for any $\Lambda$-module $M$,
	\begin{equation*}
		\Hom_{\Lambda}(\widetilde{P},M)=\varinjlim_{U\in\mcF_{I(P)}}\Hom_{\Lambda}(U,M),
	\end{equation*}
	where $\mcF_{I(P)}$ is the directed set of all the nonzero $\Lambda$-submodules of $I(P)$. For any $U\in\mcF_{I(P)}$ and any $\Lambda$-homomorphism $f:U\to M$, we have $0\neq U\hookrightarrow U_{\mfp}\hookrightarrow I(P)_{\mfp}=I(P)$ by Lemma \ref{Lem:localinj} (\ref{item:injlocalsplit}), and hence the composite of $S(P)\hookrightarrow U_{\mfp}$ and $f_{\mfp}:U_{\mfp}\to M_{\mfp}$ is an element of $\Hom_{\Lambda_{\mfp}}(S(P),M_{\mfp})$. This correspondence defines a $\mbZ$-homomorphism $\varphi_{M}:\Hom_{\Lambda}(\widetilde{P},M)\to\Hom_{\Lambda_{\mfp}}(S(P),M_{\mfp})$.
		
	In order to show that $\varphi_{M}$ is a $k(\widetilde{P})$-homomorphism, let $[h]\in k(\widetilde{P})$, where $h:V\to U$, and $V\in\mcF_{I(P)}$. Then there exists a $\Lambda$-homomorphism $\widetilde{h}:I(P)\to I(P)$ such that the diagram
	\begin{equation*}
		\newdir^{ (}{!/-5pt/@^{(}}
		\xymatrix{
			I(P)\ar[r]^{\widetilde{h}} & I(P) \\
			V\ar@{^{ (}->}[u]\ar[r]^{h} & U\ar@{^{ (}->}[u]
		}
	\end{equation*}
	commutes. By Proposition \ref{Prop:residuefieldsimple}, $[h]\in k(\widetilde{P})$ corresponds to a $\Lambda_{\mfp}$-homomorphism $\theta:S(P)\to S(P)$. The commutative diagram
	\begin{equation*}
		\newdir^{ (}{!/-5pt/@^{(}}
		\xymatrix{
			I(P)\ar[r]^{\widetilde{h}} & I(P) & \\
			V_{\mfp}\ar@{^{ (}->}[u]\ar[r]^{h_{\mfp}} & U_{\mfp}\ar@{^{ (}->}[u]\ar[r]^{f_{\mfp}} & M_{\mfp}\\
			S(P)\ar@{^{ (}->}[u]\ar[r]^{\theta} & S(P)\ar@{^{ (}->}[u] &
		}
	\end{equation*}
	shows that $\varphi_{M}$ is a $k(\widetilde{P})$-homomorphism.
	
	Assume that $\varphi_{M}([f])=0$. Then by the definition of $\varphi_{M}$, $f_{\mfp}$ is not a $\Lambda_{\mfp}$-monomorphism. Hence $f$ is not a $\Lambda$-monomorphism. This means that $[f]=0$ by Proposition \ref{Prop:nonzeroiffmono}. Therefore $\varphi_{M}$ is injective.
		
	In order to show the surjectivity of $\varphi_{M}$, take a nonzero $\Lambda_{\mfp}$-homomorphism $g:S(P)\to M_{\mfp}$. By Lemma \ref{Lem:localinj} (\ref{item:injlocalsplit}), the canonical $\Lambda$-homomorphism $E_{\Lambda}(M)\to E_{\Lambda}(M)_{\mfp}$ is a split $\Lambda$-epimorphism. Denote a section of it by $\nu:E_{\Lambda}(M)_{\mfp}\to E_{\Lambda}(M)$ and the composite of $g:S(P)\to M_{\mfp}$, $M_{\mfp}\hookrightarrow E_{\Lambda}(M_{\mfp})$, and $\nu:E_{\Lambda}(M)_{\mfp}\hookrightarrow E_{\Lambda}(M)$ by $g'$. Since $g'$ is nonzero, there exists a nonzero element $x$ of $S(P)$ such that $0\neq g'(x)\in E_{\Lambda}(M)$. Since $M$ is an essential $\Lambda$-submodule of $E_{\Lambda}(M)$, there exists $\lambda\in\Lambda$ such that $0\neq g'(x)\lambda\in M$. Then $g'$ induces a $\Lambda$-homomorphism $f':x\lambda\Lambda\to M$. $f'$ defines an element of $\Hom_{\Lambda}(\widetilde{P},M)$. We have the commutative diagram
	\begin{equation*}
		\newdir^{ (}{!/-5pt/@^{(}}
		\xymatrix{
			S(P)\ar[r]^{g} & M_{\mfp}\ar@{^{ (}->}[r] & E_{\Lambda}(M)_{\mfp}\ar@{^{ (}->}[r]^{\nu} & E_{\Lambda}(M) \\
			x\lambda\Lambda\ar@{^{ (}->}[u]\ar[rrr]^{f'} & & & M.\ar@{^{ (}->}[u]
		}
	\end{equation*}
	By applying $(-)_{\mfp}$ to this diagram, we obtain the commutative diagram
	\begin{equation*}
		\newdir^{ (}{!/-5pt/@^{(}}
		\xymatrix{
			S(P)\ar[r]^{g} & M_{\mfp}\ar@{^{ (}->}[r] & E_{\Lambda}(M)_{\mfp}\ar@{=}[r]^{\nu_{\mfp}} & E_{\Lambda}(M)_{\mfp} \\
			(x\lambda\Lambda)_{\mfp}\ar@{=}[u]\ar[rrr]^{f'_{\mfp}} & & & M_{\mfp}.\ar@{^{ (}->}[u]
		}
	\end{equation*}
	This shows that $\varphi_{M}([f'])=g$, and hence $\varphi_{M}$ is surjective.
	
	It is straightforward to show that $\varphi_{M}$ is functorial on $M$.
	
	(2) This follows from (\ref{item:extsimple}) and Theorem \ref{Thm:Bassext}.
\end{proof}

\begin{ex}
	Let $R$ be a commutative noetherian ring and $\Lambda$ the ring of $2\times 2$ lower triangular matrices over $R$, that is,
	\begin{equation*}
		\Lambda=\begin{bmatrix} R & 0 \\ R & R \end{bmatrix}.
	\end{equation*}
	Then for any prime ideal $\mfp$ of $R$,
	\begin{equation*}
		\Lambda_{\mfp}=\begin{bmatrix} R_{\mfp} & 0 \\ R_{\mfp} & R_{\mfp} \end{bmatrix},
	\end{equation*}
	and all the isomorphism classes of simple $\Lambda_{\mfp}$-modules are given by
	\begin{equation*}
		S_{1}(\mfp)=\begin{bmatrix} k(\mfp) & 0 \end{bmatrix},\ S_{2}(\mfp)=\frac{\begin{bmatrix} k(\mfp) & k(\mfp) \end{bmatrix}}{\begin{bmatrix} k(\mfp) & 0 \end{bmatrix}},
	\end{equation*}
	where $k(\mfp)=R_{\mfp}/\mfp R_{\mfp}$.
	By considering Theorem \ref{Thm:primesimple}, denote the prime ideal of $\Lambda$ corresponding to $S_{i}(\mfp)$ by $P_{i}(\mfp)$ for each $i\in\{1,2\}$. Then $k(\widetilde{P_{i}(\mfp)})=k(\mfp)$, and
	\begin{equation*}
		E_{\Lambda}(\widetilde{P_{1}(\mfp)})=\begin{bmatrix} E_{R}(R/\mfp) & E_{R}(R/\mfp) \end{bmatrix},\ E_{\Lambda}(\widetilde{P_{2}(\mfp)})=\frac{\begin{bmatrix} E_{R}(R/\mfp) & E_{R}(R/\mfp) \end{bmatrix}}{\begin{bmatrix} E_{R}(R/\mfp) & 0 \end{bmatrix}}.
	\end{equation*}
	
	Let $V$ be an $R$-module, and define a $\Lambda$-module $M$ by
	\begin{equation*}
		M=\begin{bmatrix} V & 0 \end{bmatrix}.
	\end{equation*}
	Then
	\begin{equation*}
		M_{\mfp}=\begin{bmatrix} V_{\mfp} & 0 \end{bmatrix},\ E_{\Lambda_{\mfp}}(M_{\mfp})=\begin{bmatrix} E_{R}(V_{\mfp}) & E_{R}(V_{\mfp}) \end{bmatrix}.
	\end{equation*}
	By Theorem \ref{Thm:extBasssimple},
	\begin{align*}
		\mu_{0}(\widetilde{P_{1}(\mfp)},M)&=\dim_{k(\mfp)}\Hom_{\Lambda_{\mfp}}(S_{1}(\mfp),M_{\mfp})=\dim_{k(\mfp)}\Hom_{R_{\mfp}}(k(\mfp),V_{\mfp})=\mu_{0}(\mfp,V),\\
		\mu_{0}(\widetilde{P_{2}(\mfp)},M)&=\dim_{k(\mfp)}\Hom_{\Lambda_{\mfp}}(S_{2}(\mfp),M_{\mfp})=0,\\
		\mu_{1}(\widetilde{P_{1}(\mfp)},M)&=\dim_{k(\mfp)}\Ext^{1}_{\Lambda_{\mfp}}(S_{1}(\mfp),M_{\mfp})=\dim_{k(\mfp)}\Ext^{1}_{R_{\mfp}}(k(\mfp),V_{\mfp})=\mu_{1}(\mfp,V),\\
		\mu_{1}(\widetilde{P_{2}(\mfp)},M)&=\dim_{k(\mfp)}\Ext^{1}_{\Lambda_{\mfp}}(S_{2}(\mfp),M_{\mfp})=\dim_{k(\mfp)}\Hom_{R_{\mfp}}(k(\mfp),V_{\mfp})=\mu_{0}(\mfp,V).
	\end{align*}
\end{ex}

\section{Acknowledgments}

The author would like to express his deep gratitude to his supervisor Osamu Iyama for his elaborated guidance. The author thanks Yuji Yoshino for his valuable comments.



\end{document}